\DeclareMathOperator{\Tor}{Tor}
\DeclareMathOperator{\Ext}{Ext}
\DeclareMathOperator{\Hom}{Hom}
\DeclareMathOperator{\Nilp}{Nilp}
\DeclareMathOperator{\Cob}{Cob}
\DeclareMathOperator{\Br}{Bar}
\DeclareMathOperator{\id}{id}
\DeclareMathOperator{\im}{im}
\DeclareMathOperator{\gr}{gr}
\DeclareMathOperator{\q}{q}
\DeclareMathSymbol{\dabar}{\mathord}{AMSa}{"39}
\DeclareMathSymbol{\dahead}{\mathord}{AMSa}{"4B}
\renewcommand{\le}{\leqslant}
\renewcommand{\ge}{\geqslant}
\newcommand{\rarrow}{\longrightarrow}
\newcommand{\ot}{\otimes}
\newcommand{\+}{\nobreakdash-}
\renewcommand{\;}{,\>}
\renewcommand{\.}{\mskip.5\thinmuskip}
\renewcommand{\:}{\colon}
\renewcommand{\d}{\partial}
\newcommand{\bu}{{\text{\smaller\smaller$\scriptstyle\bullet$}}}
\newcommand{\subbu}{{\text{\smaller\smaller$\scriptscriptstyle\bullet$}}}
\newcommand{\lrarrow}{\.\relbar\joinrel\relbar\joinrel\rightarrow\.}
\newcommand{\ldarrow}{\mathrel{\.\dabar\dabar\dabar\dahead\.}}
\newcommand{\comp}{\sphat\,\.}
\renewcommand{\b}{{\mathsf b}}
\newcommand{\sC}{{\mathsf C}}
\newcommand{\sD}{{\mathsf D}}
\newcommand{\sF}{{\mathsf F}}
\newcommand{\sG}{{\mathsf G}}
\newcommand{\Z}{{\mathbb Z}}
\newcommand{\rop}{{\mathrm{op}}}
\newcommand{\modl}{{\operatorname{\mathsf{--mod}}}}
\newcommand{\comodl}{{\operatorname{\mathsf{--comod}}}}
\newcommand{\wot}{\mathbin{\widehat\otimes}}
\newcommand{\Section}[1]{\bigskip\section{#1}\medskip}
\theoremstyle{plain}
\newtheorem{thm}{Theorem}[section]
\newtheorem{lem}[thm]{Lemma}
\newtheorem{prop}[thm]{Proposition}
\newtheorem{cor}[thm]{Corollary}
\theoremstyle{definition}
\newtheorem{rem}[thm]{Remark}
\newtheorem{ex}[thm]{Example}
\begin{document}


\title{Koszulity of cohomology \\ $=$ $K(\pi,1)$-ness $+$
quasi-formality}

\author{Leonid Positselski}

\address{Department of Mathematics, Faculty of Natural Sciences,
University of Haifa, Mount Carmel, Haifa 31905, Israel; and
\newline\indent Laboratory of Algebraic Geometry, National Research
University Higher School of Economics, Moscow 117312; and
\newline\indent Sector of Algebra and Number Theory, Institute for
Information Transmission Problems, Moscow 127051, Russia}

\email{posic@mccme.ru}

\begin{abstract}
 This paper is a greatly expanded version
of~\cite[Section~9.11]{Partin}.
 A series of definitions and results illustrating the thesis in
the title (where quasi-formality means vanishing of a certain kind
of Massey multiplications in the cohomology) is presented.
 In particular, we include a categorical interpretation of
the ``Koszulity implies $K(\pi,1)$'' claim, discuss
the differences between two versions of Massey operations, and
apply the derived nonhomogeneous Koszul duality theory in order
to deduce the main theorem.
 In the end we demonstrate a counterexample providing a negative
answer to a question of Hopkins and Wickelgren about formality of
the cochain DG-algebras of absolute Galois groups, thus showing
that quasi-formality cannot be strengthened to formality in
the title assertion.
\end{abstract}

\maketitle

\tableofcontents

\section*{Introduction}
\medskip

 A \emph{quadratic algebra} is an associative algebra defined by
homogeneous quadratic relations.
 In other words, a positively graded algebra
$A=k\oplus A_1\oplus A_2\oplus\dotsb$ over a field~$k$ is called
quadratic if it is generated by its first-degree component $A_1$
with relations in degree~$2$.
 A positively graded associative algebra $A$ is called
\emph{Koszul}~\cite{Pr,BGSoe,PV} if one has $\Tor^A_{ij}(k,k)=0$
for all $i\ne j$, where the first grading~$i$ on the Tor spaces is
the usual homological grading and the second grading~$j$, called
the \emph{internal} grading, is induced by the grading of~$A$.
 In particular, this condition for $i=1$ means that the algebra $A$
is generated by $A_1$, and the conditions for $i=1$ and~$2$ taken
together mean that $A$ is quadratic.

 Conversely, for any positively graded algebra $A$ with
finite-dimensional components $A_n$ the diagonal part
$\bigoplus_n\Ext_A^{n,n}(k,k)$ of the algebra $\Ext_A^*(k,k)\simeq
\Tor^A_*(k,k)^*$ is a quadratic algebra.
 When the algebra $A$ is quadratic, the two quadratic algebras
$A$ and $\bigoplus_n\Ext_A^{n,n}(k,k)$ are called \emph{quadratic
dual} to each other.
 Without the finite-dimensionality assumption on the grading
components, the quadratic duality connects quadratic graded
algebras with quadratic graded coalgebras~\cite{PV,Pbogom}.

 When one attempts to deform quadratic algebras by considering algebras
with \emph{nonhomogeneous} quadratic relations, one discovers that
there are two essentially different ways of doing so.
 One can either consider relations with terms of the degrees not
greater than~$2$, that is
\begin{equation} \label{210}
  q_2(x)+q_1(x)+q_0=0,
\end{equation}
where $x=(x_\alpha)$, \ $\deg x_\alpha=1$ denotes the set of
generators, and $\deg q_n=n$; or relations with terms of
the degrees not less than~$2$, that is
\begin{equation} \label{234}
  q_2(x)+q_3(x)+q_4(x)+q_5(x)+\dotsb=0.
\end{equation}

 In the latter case, it is natural to allow the relations to be
infinite power series, that is consider the algebra they define
as a quotient algebra of the algebra of formal Taylor power series
in noncommuting variables~$x_\alpha$, rather than the algebra of
noncommutative polynomials.
 Almost equivalently, this means considering a set of relations of
the type~\eqref{234} as defining a conilpotent \emph{coalgebra},
while a set of relations of the type~\eqref{210} defines
a filtered \emph{algebra}.
 More precisely, of course, one should say that the complete topological
algebra defined by the relations~\eqref{234} is the dual vector space
to a discrete conilpotent coalgebra.
 This is one of the simplest ways to explain the importance of
coalgebras in Koszul duality.

 The alternative between considering nonhomogeneous quadratic
relations of the types~\eqref{210} and~\eqref{234} roughly leads
to a division of the Koszul duality theory into two streams,
the former of them going back to the classical paper~\cite{Pr}
and the present author's work~\cite{Pcurv}, and the latter one
originating in the paper~\cite{PV}.
 The former theory, invented originally for the purposes of computing
the cohomology of associative algebras generally and the Steenrod
algebra in particular, eventually found its applications in
semi-infinite homological algebra.
 The latter point of view was being applied to Galois cohomology,
the conjectures about absolute Galois groups, and the theory of motives
with finite coefficients.
 As a general rule, the author's subsequent papers vaguely associated
with relations of the type~\eqref{210} were published on
the \texttt{arXiv} in the subject area \texttt{[math.CT]}, while
the papers having to do with relations of the type~\eqref{234} were
put into the area \texttt{[math.KT]}.

 This paper is concerned with relations of the type~\eqref{234}.
 Its subject can be roughly described as cohomological
characterization of the coalgebras $C$ defined by
the relations~\eqref{234} with the quadratic principal parts
\begin{equation} \label{homog2}
  q_2(x)=0
\end{equation}
of the relations defining a Koszul graded coalgebra.
 In fact, according to the main theorem of~\cite{PV}
(see also~\cite{Lee}) a conilpotent coalgebra $C$ is defined by
a self-consistent system of relations~\eqref{234} with Koszul
quadratic principal part~\eqref{homog2} if and only if its
cohomology algebra
$$
 H^*(C)=\Ext^*_C(k,k)
$$
is Koszul.
 Moreover, a certain seemingly weaker set of conditions on the algebra
$H^*=H^*(C)$ is sufficient, and implies Koszulity of algebras of
the form $H^*(C)$.
 Specifically, the algebra $H^*(C)$ is Koszul whenever $H^2(C)$ is
multiplicatively generated by $H^1(C)$, there are no nontrivial
relations of degree~$3$ between elements of degree~$1$ in $H^*(C)$,
and the quadratic part of the algebra $H^*(C)$ is Koszul
(see Theorem~\ref{with-vishik-main-theorem}).
 When the algebra $H^*(C)$ is Koszul, it is simply the dual quadratic
algebra to the quadratic coalgebra defined by
the relations~\eqref{homog2}.

 Given an arbitrary (not necessarily conilpotent) coaugmented
coalgebra~$D$ with the maximal conilpotent subcoalgebra $C=\Nilp D
\subset D$, the algebra $H^*(D)=\Ext_D^*(k,k)$ is Koszul if and only if
the following two conditions hold~\cite[Section~9.11]{Partin}:
\begin{enumerate}
\renewcommand{\theenumi}{\roman{enumi}}
\item the homomorphism of cohomology algebras $H^*(C)\rarrow H^*(D)$
induced by the embedding of coalgebras $C\rarrow D$ is an isomorphism;
\item a certain family of higher Massey products in the cohomology
algebra $H^*(D)$ vanishes.
\end{enumerate}
 In this paper we provide a detailed proof of this result, and
discuss at length its constituting components.

 In particular, the condition~(i) and the implication ``Koszulity of
$H^*(D)$ implies~(i)'' allow numerous analogues and generalizations,
including such assertions as
\begin{itemize}
\item for any discrete group $\Gamma$ whose cohomology algebra
      $H^*(\Gamma,k)$ with coefficients in a field~$k$ is Koszul,
      the cohomology algebra $H^*(\Gamma_k\sphat\.,\.k)$ of
      the $k$\+completion of the group $\Gamma$ is isomorphic to
      the algebra $H^*(\Gamma,k)$ \cite[Section~5]{Pbogom}; or
\item any rational homotopy type $X$ with a Koszul cohomology
      algebra $H^*(X,\mathbb Q)$ is a rational $K(\pi,1)$
      space~\cite{PY}.
\end{itemize}
 That is why we call the condition~(i) ``the $K(\pi,1)$ condition''.

 More generally, in place of the cochain DG\+algebra of a coaugmented
coalgebra $D$, consider an arbitrary nonnegatively cohomologically
graded augmented DG\+algebra $0\rarrow A^0\rarrow A^1\rarrow A^2\rarrow
\dotsb$ over a field~$k$ with $H^0(A^\bu)\simeq k$.
 Then the cohomology algebra $H^*(A^\bu)$ of the DG\+algebra $A^\bu$
is Koszul if and only if the following two conditions hold:
\begin{enumerate}
\renewcommand{\theenumi}{\roman{enumi}}
\item the cohomology coalgebra of the bar-construction of
      the augmented DG\+alge\-bra $A^\bu$ is concentrated in
      cohomological degree~$0$;
\item a certain family of higher Massey products in the cohomology
      algebra $H^*(A^\bu)$ vanishes.
\end{enumerate}
 Once again, we call the condition~(i) ``the $K(\pi,1)$ condition''.

 Furthermore, relation sets of the type~\eqref{234} are naturally
viewed up to variable changes
\begin{equation}  \label{123}
 x_\alpha\rarrow x_\alpha + p_{2,\.\alpha}(x) + p_{3,\.\alpha}(x)
 + p_{4,\.\alpha}(x) + \dotsb,
\end{equation}
where $\deg p_{n,\.\alpha}=n$.
 Some apparently different systems of relations~\eqref{234} may be also
equivalent, i.~e., they may mutually imply each other, or in other
words, generate the same closed ideal in the algebra
of noncommutative Taylor power series
(see Example~\ref{counter-equivalent-relations} for an illustration).
 A natural question is whether or when a system of relations~\eqref{234}
can be homogenized, i.~e., transformed into a system of relations
equivalent to~\eqref{homog2} by a variable change~\eqref{123}.
 We show that a variable change~\eqref{123} homogenizing a given system
of relations~\eqref{234} defining a conilpotent coalgebra~$C$ with
Koszul cohomology algebra $H^*(C)$ exists if and only if the cochain
DG\+algebra computing $H^*(C)$ is \emph{formal}, i.~e., can be connected
with its cohomology algebra by a chain of multiplicative
quasi-isomorphisms.

 Obviously, formality implies the Massey product vanishing
condition~(ii), which we accordingly call the \emph{quasi-formality}
condition.
 Not distinguishing formality from quasi-formality seems to be
a common misconception.
 The above explanations suggest that the cochain DG\+algebras of most
conilpotent coalgebras $C$ with Koszul cohomology algebras $H^*(C)$
should not be formal but only quasi-formal, as the possibility of
homogenizing a system of relations~\eqref{234} looks unlikely,
generally speaking.

 Indeed, we provide a simple counterexample of a pro-$l$-group $H$
whose cohomology algebra $H^*(H,\Z/l)$ is Koszul, while the cochain
DG\+algebra computing it is not formal, as the relations in the group
coalgebra $\Z/l(H)$ cannot be homogenized by variable changes. 
 It was conjectured in the papers~\cite{PV,Partin} that the cohomology
algebra $H^*(G_F,\Z/l)$ is Koszul for the absolute Galois group
$G_F$ of any field $F$ containing a primitive $l$\+root of unity;
and the question was asked in the paper~\cite{HW} whether the cochain
DG\+algebra of the group $G_F$ with coefficients in $\Z/l$ is formal.
 As the group $H$ in our counterexample is the maximal quotient
pro-$l$-group of the absolute Galois group $G_F$ of an appropriate
$p$\+adic field $F$ containing a primitive $l$\+root of unity, our
results provide a negative answer to this question of Hopkins and
Wickelgren.

\subsection*{Acknowledgements}
 The mathematical content of this paper was largely worked out around
the years 1996--97, when the author was a graduate student at
Harvard University.
 I benefited from helpful conversations with A.~Beilinson,
J.~Bernstein, and D.~Kazhdan at that time.
 Section~9.11, where these ideas were first put in writing, was
inserted into the paper~\cite{Partin} shortly before its journal
publication under the influence of a question asked by P.~Deligne
during the author's talk at the ``Christmas mathematical meetings''
in the Independent University of Moscow in January~2011.
 I~learned that the question about formality of the cochain
DG\+algebras of absolute Galois groups was asked in the paper~\cite{HW}
from a conversation with I.~Efrat while visiting Ben Gurion University
of the Negev in Be'er Sheva in the Summer of~2014.
 The decision to write this paper was stimulated by
the correspondence with P.~Schneider in the Summer of~2015.
 I~am grateful to all the mentioned persons and institutions.
 Finally, I~would like to thank J.~Stasheff, who read the first
version of this paper, made several helpful comments, and suggested
a number of relevant references.
 I~also wish to thank the anonymous referee for careful reading of
the manuscript and helpful suggestions.
 The author's research is supported by the Israel Science Foundation
grant~\#\,446/15 at the University of Haifa.

\Section{Koszulity Implies $K(\pi,1)$-ness}
\label{koszul-implies-kpi1}

 Postponing the discussion of DG\+algebras, DG\+coalgebras, and
derived nonhomogeneous Koszul duality to
Sections~\ref{koszul-implies-quasiform}\+-%
\ref{kpi1+quasiform-imply-koszul}, we devote this section to
the formulation of a categorical version of the ``Koszulity
implies $K(\pi,1)$'' claim.
 We begin our discussion with recalling some basic definitions and
results from~\cite{PV} and~\cite[Section~5]{Pbogom}.

 A coassociative counital coalgebra $D$ over a field~$k$ is said
to be \emph{coaugmented} if it is endowed with a coalgebra morphism
$k\rarrow D$ (called the \emph{coaugmentation}).
 The quotient coalgebra (without counit) of a coaugmented coalgebra
$D$ by the image of the coaugmentation morphism is denoted by
$D_+=D/k$.

 A coaugmented coalgebra $C$ is called \emph{conilpotent} if for
any element $c\in C$ there exists an integer $m\ge 1$ such that
$c$~is annihilated by the iterated comultiplication map
$C\rarrow C_+^{\ot m+1}$.
 (Several references and terminological comments related
to this definition can be found in~\cite[Remark~D.6.1]{Psemi}.)
 The maximal conilpotent subcoalgebra
$\bigcup_m\ker(D\to D_+^{\ot m+1})$ of a coaugmented coalgebra $D$
is denoted by $\Nilp D\subset D$.

 The cohomology algebra of a coaugmented coalgebra $D$ is defined
as the Ext algebra $H^*(D)=\Ext_D^*(k,k)$, where the field~$k$ is
endowed with a left $D$\+comodule structure via the coaugmentation map.
 The cohomology algebra $H^*(D)$ is computed by the reduced cochain
DG\+algebra of the coalgebra $D$
$$
 k\lrarrow D_+\lrarrow D_+\ot_k D_+\lrarrow D_+\ot_k D_+\ot_k D_+
 \lrarrow\dotsb,
$$
which is otherwise known as the \emph{reduced cobar-complex} or
the \emph{cobar construction} of the coaugmented coalgebra $D$
and denoted by $\Cob^\bu(D)$.

 The following result can be found in~\cite[Corollary~5.3]{Pbogom}.

\begin{thm} \label{maximal-conilpotent-subcoalgebra-cohomology}
 Let $D$ be a coaugmented coalgebra over a field~$k$ and\/
$\Nilp D\subset D$ be its maximal conilpotent subcoalgebra.
 Assume that the cohomology algebra $H^*(D)=\Ext_D^*(k,k)$ is Koszul.
 Then the embedding\/ $\Nilp D\rarrow D$ induces a cohomology
isomorphism $H^*(\Nilp D)\simeq H^*(D)$.
\end{thm}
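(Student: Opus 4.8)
The plan is to compare the two reduced cobar complexes directly. Write $C=\Nilp D$ and fix a vector space splitting $D_+=C_+\oplus U$, where $U\cong D/\Nilp D$. The first elementary fact I would record is that $U$ has no nonzero primitive elements, i.e.\ the induced reduced comultiplication $U\rarrow U\ot_k U$ is injective; otherwise a primitive of $U$ would lift to an element along which the maximal conilpotent subcoalgebra could be enlarged. Since $C\rarrow D$ is a coalgebra homomorphism, it induces a map of cobar complexes, and as we work over a field this is a degreewise split injection. I would organize the whole argument around the resulting short exact sequence of complexes
\[
0\rarrow\Cob^\bu(C)\rarrow\Cob^\bu(D)\rarrow K^\bu\rarrow0,\qquad K^n=D_+^{\ot n}/C_+^{\ot n},
\]
whose long exact cohomology sequence will reduce the theorem to the single assertion that $K^\bu$ is acyclic.

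First I would dispose of surjectivity of $H^*(C)\rarrow H^*(D)$. In cohomological degree~$1$ this map is an isomorphism: $H^1$ on either side is the space of primitives of $D$, and every primitive spans, together with the coaugmentation, a conilpotent subcoalgebra, hence lies in $\Nilp D$. Koszulity of $H^*(D)$ forces it to be generated as an algebra by $H^1(D)$ (this is the case $i=1$ of the vanishing $\Tor^{H^*(D)}_{ij}(k,k)=0$ for $i\ne j$), so the algebra homomorphism $H^*(C)\rarrow H^*(D)$ surjects onto a generating set and is therefore surjective in every degree. Feeding this into the long exact sequence kills the maps $H^n(D)\rarrow H^n(K^\bu)$ and identifies $H^n(K^\bu)$ with the kernel of $H^{n+1}(C)\rarrow H^{n+1}(D)$. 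Thus acyclicity of $K^\bu$ is \emph{equivalent} to injectivity of $H^*(C)\rarrow H^*(D)$, and this injectivity is the entire content of the theorem.

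The acyclicity of $K^\bu$ is the hard part, and it is exactly here that Koszulity is indispensable. In low degrees injectivity is elementary: if $\eta\in D_+$ satisfies $\bar\Delta\eta\in C_+\ot C_+$, then iterating shows $\eta$ is itself conilpotent, which already settles degree~$2$. In higher degrees, however, a cobar coboundary of a $C$-cochain need not be a $C$-cochain, and there is genuine work to do. I would filter $K^\bu$ by the number of tensor factors lying in $U$, so that each stratum is a tensor expression with $C_+$ in some slots and $U$ in at least one slot. On the associated graded the differential splits according to how it changes the $U$-degree: along the $C_+$-slots it is the cobar differential of $C$, along each $U$-slot it is the $C$-bicomodule coaction $U\rarrow C_+\ot U\oplus U\ot C_+$, while the reduced comultiplication $U\rarrow U\ot_k U$ appears as the next differential of the resulting spectral sequence.

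The crux — and the step I expect to be the main obstacle — is to show that every stratum with at least one $U$-factor contributes nothing, so that the spectral sequence collapses onto the $C$-column and the edge homomorphism $H^*(C)\rarrow H^*(D)$ is forced to be an isomorphism. This is where Koszulity enters decisively: the hypothesis that $\Tor^{H^*(D)}_{ij}(k,k)$ is concentrated on the diagonal says that the cohomology of $\Cob^\bu(D)$ is as thin as possible, and combined with the injectivity of $U\rarrow U\ot_k U$ it leaves no room for surviving classes in positive $U$-degree. I would carry this out by induction on the $U$-degree, using the absence of primitives in $U$ to contract each positive stratum and invoking Koszulity to rule out surviving differentials between strata. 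Convergence of the spectral sequence presents no difficulty, since in each fixed cohomological degree~$n$ the number of $U$-factors is bounded by~$n$, so only finitely many filtration steps occur.
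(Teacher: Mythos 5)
Your reduction is set up correctly, and the easy half is handled properly: the degree-one isomorphism (primitives of $D$ lie in $\Nilp D$), the surjectivity of $H^*(\Nilp D)\rarrow H^*(D)$ from degree-one generation, and the translation of the remaining injectivity into acyclicity of the quotient complex $K^\bu$ are all fine. Your preliminary claim that $\bar\Delta(u)\in C_+\ot D_+ + D_+\ot C_+$ forces $u\in C_+$ is also true (write $\bar\Delta^{(2a)}=(\bar\Delta^{(a)}\ot\bar\Delta^{(a)})\circ\bar\Delta$ and use conilpotency of the finitely many $C_+$\+factors appearing), and it gives injectivity in degree~$2$. But the step you yourself flag as ``the main obstacle''---injectivity in degrees~$\ge3$---is the entire content of the theorem, and the plan you sketch for it does not work as written. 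First, the number of tensor factors lying in $U$ does not define a filtration compatible with the cobar differential: for $u\in U$ the component of $\bar\Delta(u)$ in $C_+\ot C_+$ \emph{lowers} the $U$\+degree while the component in $U\ot U$ raises it, so neither the increasing nor the decreasing filtration by $U$\+degree is preserved, and the spectral sequence you describe does not exist without further work (at minimum one would have to show the complement $U$ can be chosen so that $\bar\Delta(U)$ has no $C_+\ot C_+$ component, which you do not address and which is not clear). Second, and more fundamentally, you give no mechanism by which the hypothesis---a statement about $\Tor^{H^*(D)}_{ij}(k,k)$, i.e.\ about the bar construction of the cohomology \emph{ring}---would kill the positive\+$U$\+degree strata of the cobar complex of~$D$; ``Koszulity leaves no room for surviving classes'' is an assertion, not an argument, and indeed without Koszulity the statement is false, so the argument must use the hypothesis in an essential, quantitative way somewhere.

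The paper closes exactly this gap by a different route, through the structure theory of conilpotent coalgebras rather than a direct attack on $K^\bu$. From the degree\+$1$ isomorphism and degree\+$2$ monomorphism (the general facts you also establish) one gets that $\q H^*(\Nilp D)\simeq\q H^*(D)=H^*(D)$ is Koszul and that the map $\q H^*(\Nilp D)\rarrow H^*(\Nilp D)$ is an isomorphism in degree~$2$ and a monomorphism in degree~$3$; then Theorem~\ref{with-vishik-main-theorem} (the main theorem of~\cite{PV}, itself a nontrivial result proved via the coaugmentation-filtration spectral sequence on the conilpotent coalgebra) yields that $H^*(\Nilp D)$ is quadratic, whence $H^*(\Nilp D)=\q H^*(\Nilp D)\simeq H^*(D)$. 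Some substitute for this input, or for the homological lemmas of \cite[Section~5]{Pbogom}, is indispensable; your proposal contains no replacement for it.
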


 Theorem~\ref{maximal-conilpotent-subcoalgebra-cohomology} has
a version with an augmented algebra $R$ replacing the coaugmented
coalgebra~$D$ \cite[Remark~5.6]{Pbogom}.
 Let $R_+=\ker(R\to k)$ denote the augmentation ideal, and let $I$
run over all the ideals $I\subset R_+$ in $R$ for which the quotient
algebra $R/I$ is finite-dimensional and its augmentation ideal $R_+/I$
is nilpotent.
 The coalgebra of pronilpotent completion $R\comp$ of the augmented
algebra $R$ is defined as the filtered inductive limit $R\comp=
\varinjlim_I (R/I)^*$ of the coalgebras $(R/I)^*$ dual to
the finite-dimensional algebras $R/I$.
 Clearly, the coalgebra $R\comp$ is conilpotent.

 The cohomology algebra $H^*(R)=\Ext_R^*(k,k)$ of an augmented algebra
$R$ is computed by its reduced cobar-complex $\Cob^\bu(R)$
$$
 k\lrarrow R_+^*\lrarrow (R_+\ot_k R_+)^*\lrarrow
(R_+\ot_k R_+\ot_k R_+)^*\lrarrow\dotsb
$$
 The natural injective morphism of cobar-complexes $\Cob^\bu(R\comp)
\rarrow\Cob^\bu(R)$ induces a natural morphism of cohomology algebras
$H^*(R\comp)\rarrow H^*(R)$.

\begin{thm} \label{pronilpotent-completion-coalgebra-cohomology}
 Let $R$ be an augmented algebra over a field~$k$ and $R\comp$
be the coalgebra of its pronilpotent completion.
 Assume that the cohomology algebra $H^*(R)=\Ext_R^*(k,k)$ is Koszul.
 Then the natural morphism of the cohomology algebras $H^*(R\comp)
\rarrow H^*(R)$ is an isomorphism.
\end{thm}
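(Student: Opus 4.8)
As with Theorem~\ref{maximal-conilpotent-subcoalgebra-cohomology}, the plan is to show that the natural injective morphism of reduced cobar-complexes $\Cob^\bu(R\comp)\rarrow\Cob^\bu(R)$ is a quasi-isomorphism; this is equivalent to the assertion of the theorem. Since $R\comp=\varinjlim_I(R/I)^*$ is a filtered inductive limit and the cohomology of the cobar-complex commutes with filtered colimits, while $H^*((R/I)^*)\simeq H^*(R/I)$ for the finite-dimensional algebras $R/I$ with nilpotent augmentation ideal, the claim can be rephrased as the statement that the natural map $\varinjlim_I H^*(R/I)\rarrow H^*(R)$ is an isomorphism. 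In cohomological degrees $0$ and~$1$ this is elementary and requires no Koszulity, since $H^1(R)=(R_+/R_+^2)^*$ and every linear functional on $R_+/R_+^2$ already factors through a finite-dimensional quotient algebra with nilpotent augmentation ideal.

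To handle all degrees simultaneously I would compare the two sides by means of the $R_+$\+adic filtration on $R$ (by the powers $R_+^n$) and the filtrations it induces on $R\comp$ and on each quotient $R/I$. The essential point is that all of these filtered algebras share a single associated graded algebra $A=\gr R=\bigoplus_n R_+^n/R_+^{n+1}$, which is generated by $A_1=R_+/R_+^2$. The adic filtration induces increasing filtrations on the cobar-complexes $\Cob^\bu(R)$ and $\Cob^\bu(R\comp)$, and a direct computation identifies the associated graded complex in both cases with the reduced cobar-complex $\Cob^\bu(A)$ of the graded algebra~$A$ (the internal grading of $A$ providing the extra grading), the comparison morphism inducing the identity map on associated graded objects.

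Koszulity enters through nonhomogeneous Koszul duality. Assuming $H^*(R)=\Ext_R^*(k,k)$ is Koszul, the associated graded algebra $A=\gr R$ is itself a Koszul quadratic algebra, quadratic dual to $H^*(R)$. The cohomology of $\Cob^\bu(A)$ is therefore concentrated on the diagonal, with its internal degree equal to its cohomological degree. Passing to the spectral sequences of the two adic filtrations, whose first pages are both $H^*(\Cob^\bu(A))$, the higher differentials must vanish for degree reasons---a differential raising cohomological degree by one would be forced to leave the diagonal---so the spectral sequences degenerate and the comparison morphism becomes an isomorphism of abutments. This simultaneously identifies $H^*(R)$ and $H^*(R\comp)$ with $H^*(A)$ compatibly with the natural map, which is the desired conclusion.

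The main obstacle is one of convergence rather than of formal bookkeeping. The adic filtration on $\Cob^\bu(R)$ is exhaustive, but its terms $(R_+^{\ot n})^*$ are full dual spaces and the filtration is given by an infinite decreasing chain, so that completeness of the filtration and the commutation of cohomology with the relevant limits must be checked with care; this is exactly the place where the distinction between $R$ and its pronilpotent completion resides, and where the conilpotency of $R\comp$ has to be invoked to keep the inductive-limit side under control. The second substantial ingredient is the nonhomogeneous Koszul duality input itself---that Koszulity of $H^*(R)$ forces Koszulity of $\gr R$ together with the degeneration above---and it is here that the full force of the Koszul property, beyond generation in degree~$1$ and quadraticity of relations, is required.
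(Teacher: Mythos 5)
Your reduction of $H^*(R\comp)$ to $\varinjlim_IH^*(R/I)$ and the verification in degrees $0$ and~$1$ are fine and agree with what the paper does; but the core of your argument has two genuine gaps, both located exactly at the points you defer to ``checking with care''. First, the increasing filtration on $\Cob^n(R)=(R_+^{\ot n})^*$ dual to the $R_+$\+adic filtration is \emph{not} exhaustive: its union consists of the functionals annihilating some term of the adic filtration of $R_+^{\ot n}$, and this is a proper subspace already for $R=k[x]$ (a functional on $(x)\subset k[x]$ with infinitely many nonzero coefficients vanishes on no $(x^m)$), and a fortiori whenever $\bigcap_mR_+^m\ne0$ (e.g.\ $R=U(\mathfrak g)$ for the nonabelian two-dimensional Lie algebra, where $y=[x,y]\in\bigcap_mR_+^m$). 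The spectral sequence of a non-exhaustive filtration computes the cohomology of the exhaustive subcomplex $\bigcup_mF^m\Cob^\bu(R)$, which is a completed version of the cobar complex; the assertion that the inclusion of this subcomplex into $\Cob^\bu(R)$ is a quasi-isomorphism is essentially the theorem itself, so no amount of degeneration on the associated graded level can reach $H^*(R)$. Second, the input ``Koszulity of $H^*(R)$ forces $\gr_{R_+}R$ to be Koszul and quadratic dual to $H^*(R)$'' is not an available form of nonhomogeneous Koszul duality: the results of \cite{PV} and \cite{Pbogom} deduce Koszulity of the associated graded coalgebra $\gr^NC$ from Koszulity of $H^*(C)$ for a \emph{conilpotent} coalgebra~$C$. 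Here the relevant conilpotent object is $C=R\comp$, so invoking them requires knowing that $H^*(R\comp)$ is Koszul---which, together with the comparison map being an isomorphism, is precisely the conclusion. As stated, this step is circular.

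The paper's proof avoids filtering $\Cob^\bu(R)$ altogether. It observes that the category of left $R\comp$\+comodules is the full subcategory of ind-nilpotent objects in $R\modl$, closed under subobjects, quotient objects and extensions, whence $H^*(R\comp)\rarrow H^*(R)$ is an isomorphism in degree~$1$ and a monomorphism in degree~$2$ for \emph{any} augmented~$R$ (the argument of \cite[Lemma~5.1]{Pbogom}). Koszulity of $H^*(R)$ (in particular, generation in degree~$1$) then upgrades degree~$2$ to an isomorphism and shows that $H^*(R\comp)$ satisfies the hypotheses of Theorem~\ref{with-vishik-main-theorem}, which forces $H^*(R\comp)$ to be quadratic and the comparison map to be an isomorphism. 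If you want to salvage a cobar-type argument, you should replace the filtration comparison by this degree~$\le2$ comparison together with the bootstrapping of Theorem~\ref{with-vishik-main-theorem} applied to the conilpotent coalgebra $R\comp$, where the relevant spectral sequence does converge.
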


 The proofs of
Theorems~\ref{maximal-conilpotent-subcoalgebra-cohomology}
and~\ref{pronilpotent-completion-coalgebra-cohomology}
are based on the following result about the cohomology of conilpotent
coalgebras~\cite[Main Theorem~3.2]{PV}.
 For any positively graded algebra $H^*$ over a field~$k$, we denote by
$\q H^*$ the quadratic part of the algebra $H^*$, i.~e., the universal
final object in the category of quadratic algebras over~$k$
endowed with a morphism into~$H^*$.
 The quadratic algebra $\q H^*$ is uniquely defined by the condition
that the morphism of graded algebras $\q H^*\rarrow H^*$ is
an isomorphism in degree~$1$ and a monomorphism in degree~$2$.

\begin{thm} \label{with-vishik-main-theorem}
 Let $C$ be a conilpotent coaugmented coalgebra, i.~e., $\Nilp C=C$.
 Assume that
\begin{itemize}
\item the quadratic part\/ $\q H^*(C)$ of the graded algebra $H^*(C)$
      is Koszul; and
\item the morphism of graded algebras\/ $\q H^*(C)\rarrow H^*(C)$ is
      an isomorphism in degree~$2$ and a monomorphism in degree~$3$.
\end{itemize}
 Then the graded algebra $H^*(C)$ is quadratic (and consequently,
Koszul). \qed
\end{thm}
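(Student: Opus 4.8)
The plan is to prove that the canonical morphism of graded algebras $\phi\:\q H^*(C)\rarrow H^*(C)$ is an isomorphism; this suffices, for a graded algebra isomorphic to a quadratic one is quadratic, and then $H^*(C)\simeq\q H^*(C)$ is Koszul by the first assumption, yielding the parenthetical claim at once. By the very definition of the quadratic part, $\phi$ is already an isomorphism in degrees~$0$ and~$1$ and a monomorphism in degree~$2$; the second assumption promotes this to an isomorphism in degree~$2$ and a monomorphism in degree~$3$. Hence the entire task is to propagate these degree-${\le}\,3$ statements to all internal degrees, and the only available leverage for such propagation is the Koszulity of $A:=\q H^*(C)$.

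I would first recast the problem in coalgebra terms through quadratic Koszul duality. Write $V=H^1(C)$, which the reduced cobar complex $\Cob^\bu(C)$ identifies with the space of primitives $\ker(\bar\Delta\:C_+\rarrow C_+\ot_k C_+)$, and put $R=\ker(V\ot_k V\rarrow H^2(C))\subset V\ot_k V$, so that $A=\{V,R\}$ and the quadratic dual coalgebra $\sC=\{V,R\}$ has components $\sC_n=\bigcap_{i+j=n-2}V^{\ot i}\ot R\ot V^{\ot j}\subseteq V^{\ot n}$. Since $A$ is Koszul, so is $\sC$, and $\Ext^*_{\sC}(k,k)\simeq A$. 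A computation in low cobar degrees identifies $R$ with the degree-$2$ part of the coradical associated graded coalgebra $\gr C$: an element of $V\ot V$ lies in $R$ exactly when it is a cobar coboundary, i.e.\ lies in $\im\bar\Delta\cap(V\ot V)$, which is precisely $(\gr C)_2$. As $\gr C$ is conilpotent and cogenerated in degree~$1$, it embeds as a graded subcoalgebra of the cotensor coalgebra on~$V$, and the maximality of the quadratic dual coalgebra among degree-$1$-cogenerated coalgebras with prescribed quadratic part gives inclusions $\gr C\subseteq\sC$ that are equalities in degrees ${\le}\,2$.

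The heart of the matter is the reverse inclusion $\sC\subseteq\gr C$ in every degree, i.e.\ the assertion $\gr C=\sC$, which is equivalent to the sought quadraticity: any degree in which $\gr C$ were strictly smaller than $\sC$ records an essential higher corelation of~$C$ and, through the coradical spectral sequence $\Ext_{\gr C}(k,k)\Rightarrow\Ext_C(k,k)=H^*(C)$, forces an extra generator or relation into $H^*(C)$, contradicting that $\phi$ be an isomorphism. I would argue by induction on the internal degree~$n$. The reinterpreted second assumption says that $C$ carries no corelations beyond the quadratic ones in degree~$3$, namely $(\gr C)_3=(R\ot V)\cap(V\ot R)=\sC_3$, which seeds the induction. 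For the inductive step the Koszulity of $A$ enters through Backelin's distributivity: the family of subspaces $\{V^{\ot i}\ot R\ot V^{\ot j}\}$ is distributive in each $V^{\ot n}$, so that $\sC_n$ is recovered from the pairwise ``cuts'' $\bigcap_{a+b=n,\,a,b\ge1}\sC_a\ot\sC_b$; matching this with the analogous description of $(\gr C)_n$ as a subcoalgebra of the cotensor coalgebra, together with a vanishing of higher corelations supplied by the spectral sequence, yields $(\gr C)_n=\sC_n$.

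The step I expect to be the main obstacle is precisely this propagation, for two interlocking reasons. First, quadraticity demands control not merely of the relations but of the generators of $H^*(C)$ in all internal degrees, and nothing in the hypotheses excludes a priori a new multiplicative generator in cohomological degree ${\ge}\,2$; ruling out such higher generators is genuinely global and must be extracted from Koszulity (distributivity) rather than from any finite-degree computation. Second, the passage between the cohomological conditions on $\phi$ and the coalgebra-theoretic conditions on $\gr C$ runs through the coradical spectral sequence, whose differentials have to be shown to vanish in the pertinent bidegrees so that the diagonal cohomology of $\gr C$ faithfully computes the low-degree part of $H^*(C)$; keeping the homological bar-grading, the internal cohomological grading, and the coradical weight mutually aligned is the delicate bookkeeping on which the induction rests. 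Once $\gr C=\sC$ is secured, the chain $H^*(C)\simeq\Ext_C(k,k)\simeq\Ext_{\sC}(k,k)\simeq A=\q H^*(C)$ shows that $\phi$ is an isomorphism and completes the proof.
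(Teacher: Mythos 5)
Your proposal follows the same route as the paper's proof, which is deferred to \cite[Main Theorem~3.2]{PV} and described in the paper as resting on exactly the two ingredients you deploy: the duality theory of quadratic and Koszul graded algebras and coalgebras, and the spectral sequence relating $H^*(\gr^N C)$ to $H^*(C)$ for the coaugmentation filtration~$N$. Your reduction to proving $\gr^N C=\sC$, the inclusion $\gr^N C\subseteq\sC$, and the translation of the degree-$2$ and degree-$3$ hypotheses are all correct; the one imprecision is that the identity $\sC_n=\sC_1\ot\sC_{n-1}\cap\sC_{n-1}\ot\sC_1$ holds for any quadratic dual coalgebra without Koszulity, whose real role is rather to kill $H^{i,j}(\sC)$ off the diagonal so that the spectral-sequence argument closes in the inductive step you rightly single out as the crux.
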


 The proof of Theorem~\ref{maximal-conilpotent-subcoalgebra-cohomology}
can be found in~\cite[Theorem~5.2 and Corollary~5.3]{Pbogom}.
 The proof of Theorem~\ref{pronilpotent-completion-coalgebra-cohomology}
is very similar; let us briefly explain how it works.

\begin{proof}[Proof of
Theorem~\ref{pronilpotent-completion-coalgebra-cohomology}]
 One notices that for any augmented algebra $R$ the morphism of
cohomology algebras $H^*(R\comp)\rarrow H^*(R)$ is an isomorphism
in degree~$1$ and a monomorphism in degree~$2$.
 Indeed, the category of left comodules over $R\comp$ is
isomorphic to the full subcategory in the category of left
$R$\+modules consisting of all the ind-nilpotent $R$\+modules
(direct limits of iterated extensions of the trivial $R$\+module~$k$,
the latter being defined in terms of the augmentation of~$R$).
 This is a full subcategory closed under subobjects, quotient
objects, and extensions in the abelian category of left $R$\+modules;
so the argument of~\cite[Lemma~5.1]{Pbogom} applies.

 Now if the algebra $H^*(R)$ is Koszul, then it follows that
the maps $H^1(R\comp)\rarrow H^1(R)$ and $H^2(R\comp)\rarrow
H^2(R)$ are isomorphisms, the composition $\q H^*(R\comp)\rarrow
H^*(R\comp)\rarrow H^*(R)$ is an isomorphism of graded algebras,
and the algebra $H^*(R\comp)$ satisfies the conditions of
Theorem~\ref{with-vishik-main-theorem}.
 Hence we conclude that the algebra $H^*(R\comp)$ is quadratic
and the morphism $H^*(R\comp)\rarrow H^*(R)$ is an isomorphism.
\end{proof}

 A generalization of the results of
Theorems~\ref{maximal-conilpotent-subcoalgebra-cohomology}
and~\ref{pronilpotent-completion-coalgebra-cohomology}
to t\+structures in triangulated categories~\cite[n$^\circ$\,1.3]{BBD}
was announced in~\cite[Remark~5.6]{Pbogom}.
 The idea of this generalization can be described as follows.

 Recall that for any t\+structure $(\sD^{\le0},\sD^{\ge0})$ on
a triangulated category $\sD$ with the core
$\sC=\sD^{\le0}\cap\sD^{\ge0}$ and for any two objects $X$, $Y\in\sC$
there are natural maps
$$
 \theta^n_{\sC,\sD}(X,Y)\:\Ext_\sC^n(X,Y)\lrarrow\Hom_\sD(X,Y[n]),
 \qquad n\ge0,
$$
from the Ext groups in the abelian category $\sC$ to the Hom groups
in the triangulated category~$\sD$.
 The maps $\theta^n_{\sC,\sD}=\theta^n_{\sC,\sD}(X,Y)$ transform
the compositions of Yoneda Ext classes in $\sC$ into the compositions
of morphisms in~$\sD$.
 Furthermore, the maps $\theta^n_{\sC,\sD}$ are always isomorphisms for
$n\le1$ and monomorphisms for $n=2$ (see~\cite[Remarque~3.1.17]{BBD},
\cite[Section~4.0]{BGSch}, or~\cite[Corollary~A.17]{Partin}).

 Starting with a coaugmented coalgebra $D$, consider the conilpotent
coalgebra $C=\Nilp D$ and the abelian category $\sC$ of
finite-dimensional left $C$\+comodules.
 Consider the bounded derived category of left $D$\+comodules
$\sD^\b(D\comodl)$, and set $\sD$ to be the full subcategory of
$\sD^\b(D\comodl)$ generated by the abelian subcategory
$\sC\subset D\comodl$.
 Then $\sC$ is the core of a bounded t\+structure on~$\sD$.

 Analogously, starting with an augmented algebra $R$, consider
the conilpotent coalgebra $C=R\comp$ and the abelian category $\sC$
of finite-dimensional left $C$\+comodules (or, which is the same,
finite-dimensional nilpotent $R$\+modules).
 Consider the bounded derived category of left $R$\+modules
$\sD^\b(R\modl)$, and set $\sD$ to be the full triangulated subcategory
of $\sD^\b(R\modl)$ generated by the abelian subcategory
$\sC\subset R\modl$.
 Once again, $\sC$ is the core of a bounded t\+structure on~$\sD$.

 In both cases, the assertions of
Theorems~\ref{maximal-conilpotent-subcoalgebra-cohomology}
and~\ref{pronilpotent-completion-coalgebra-cohomology} claim that
the map
$$
 \theta^n_{\sC,\sD}(k,k)\:\Ext_\sC^n(k,k)\rarrow\Hom_\sD(k,k[n])
$$
is an isomorphism for all~$n$, provided that the graded algebra
$\Hom_\sD(k,k[*])$ is Koszul.
 Here the trivial $D$\+comodule or $R$\+module~$k$ is the only
irreducible object in~$\sC$.
 All objects of the abelian category $\sC$ being of finite length,
it follows that all the morphisms $\theta^n_{\sC,\sD}$ are isomorphisms
for the t\+structures under consideration.

 A t\+structure for which all the maps $\theta^n_{\sC,\sD}$ are
isomorphisms is called a ``t\+structure of derived
type''~\cite[Section~4.0]{BGSch}.
 This condition is also known as the ``$K(\pi,1)$ condition of
Bloch and Kriz''~\cite{BK} and, in somewhat larger generality, as
the ``silly filtration condition'' \cite[Sections~0.2\+-0.5]{Partin}.
 Proving that a given t\+structure is of derived type is sometimes
an important and difficult problem (see, e.~g., \cite{Beil}).
 A standard approach working in some particular cases can be found
in~\cite[Section~12]{Kel} (see also~\cite[Section~A.2]{Pcosh});
the results below in this section provide an alternative way.

\bigskip

 Let $S=\{\alpha\}$ be a set of indices.
 A \emph{big ring} (or a ``ring with many objects'') $A$ is
a collection of abelian groups $A^n_{\alpha\beta}$ endowed with
the multiplication maps $A_{\alpha\beta}\times A_{\beta\gamma}
\rarrow A_{\alpha\gamma}$ and the unit elements $1_{\alpha}\in
A_{\alpha\alpha}$ satisfying the conventional associativity
and unit axioms.
 A big ring with a set of indices $S$ is the same thing as
a preadditive category with the objects indexed by~$S$
(see~\cite{Mit} or~\cite[Section~A.1]{Partin}).

 The categories of left and right modules over a big ring are
defined in the obvious way.
 A left (resp., right) $A$\+module is the same thing as a covariant
(resp., contravariant) additive functor from the preadditive
category corresponding to $A$ to the category of abelian groups.
 The \emph{tensor product} of a right $A$\+module $N$ and a left
$A$\+module $M$ is an abelian group constructed as the cokernel of
the difference of the right and left action maps
$$
 \bigoplus\nolimits_{\alpha,\beta} N_\alpha\ot_\Z A_{\alpha,\beta}\ot_\Z
 M_\beta\,\rightrightarrows\,
 \bigoplus\nolimits_\alpha N_\alpha\ot_\Z M_\alpha.
$$
 The derived functor $\Tor^A$ of the functor of tensor product~$\ot_A$
of modules over a big graded ring is defined in the usual way.

 A \emph{big graded ring} (or a ``graded ring with many objects'')
$A$ is a big ring in which every group $A_{\alpha\beta}$ is graded and
the multiplication maps are homogeneous.
 In other words, it is a collection of abelian groups $A^n_{\alpha\beta}$
endowed with the multiplication maps $A^p_{\alpha\beta}\times
A^q_{\beta\gamma}\rarrow A^{p+q}_{\alpha\gamma}$ and the unit
elements $1_\alpha\in A^0_{\alpha\alpha}$ satisfying
the associativity and unit axioms.
 We will assume that $A^n_{\alpha\beta}=0$ for $n<0$ or $\alpha\ne\beta$
and $n=0$, and that the rings $A^0_{\alpha\alpha}$ are (classically) 
semisimple.

 The definition of the Koszul property of a nonnegatively graded ring
$A=A_0\oplus A_1\oplus A_2\oplus\dotsb$ with a semisimple degree-zero
component $A_0$ is pretty well known~\cite{BGSoe}, and the definition
of a Koszul big graded ring in the above generality is its
straightforward extension.
 Specifically, a big graded ring $A$ is called \emph{Koszul} if one
has $\Tor^A_{ij}(A^0_{\alpha\alpha},A^0_{\beta\beta})=0$ for all $\alpha$,
$\beta\in S$ and all $i\ne j$ (where $i$~denotes the homological
and $j$~the internal grading on the $\Tor$).
 One can also define quadratic big graded rings and the quadratic
part of a big graded ring, etc.
 A discussion of the Koszul property of big graded rings in
a greater generality with the semisimplicity condition replaced by
a flatness condition can be found in~\cite[Section~7.4]{Partin},
and even without the flatness condition, in the rest
of~\cite[Section~7]{Partin}.

  The following theorem is the main result of this section.

\begin{thm} \label{t-structure-koszulity-main-theorem}
 Let\/ $\sC$ be the core of a t\+structure on a small triangulated
category\/~$\sD$.
 Suppose that every object of\/ $\sC$ has finite length and
let $I_\alpha$ be the irreducible objects of\/~$\sC$.
 Assume that the big graded ring $A$ with the components
$A^n_{\alpha,\beta}=\Hom_\sD(I_\alpha,I_\beta[n])$ is Koszul.
 Then for any two objects $X$, $Y\in\sC$ and any $n\ge0$ the natural
map $\theta^n_{\sC,\sD}(X,Y)\:\Ext^n_\sC(X,Y)\rarrow\Hom_\sD(X,Y[n])$
is an isomorphism.
\end{thm}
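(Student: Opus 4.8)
The plan is to organize all the maps $\theta^n_{\sC,\sD}(I_\alpha,I_\beta)$ into a single morphism of big graded rings $\theta\colon E\rarrow A$, where $E$ is the Yoneda Ext big graded ring of the finite-length abelian category $\sC$, with components $E^n_{\alpha\beta}=\Ext^n_\sC(I_\alpha,I_\beta)$. That $\theta$ is a ring homomorphism is exactly the statement that the $\theta$'s take Yoneda compositions to compositions in $\sD$, and we already know that $\theta$ is an isomorphism in internal degrees $0$ and $1$ and a monomorphism in degree~$2$ (for all objects, not only the simples). The theorem then amounts to showing that $\theta$ is an isomorphism in every internal degree. I would first reduce to the case of irreducible $X=I_\alpha$, $Y=I_\beta$: a short exact sequence in $\sC$ is a distinguished triangle in $\sD$, so it produces long exact sequences of $\Ext_\sC$ and of $\Hom_\sD$ that $\theta$ intertwines (respecting compositions, it commutes with the connecting maps), and since every object of $\sC$ has finite length, an induction on length and the five lemma reduce the assertion for arbitrary $X$, $Y$ to the case of simples.

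For simples, surjectivity of $\theta^n$ is the easy half: as $A$ is Koszul it is generated by $A^1$ over $A^0$, so the image of the ring homomorphism $\theta$, being a subring containing $A^0$ and $A^1=\theta(E^1)$, is all of~$A$. Combining surjectivity in degree~$2$ with the known injectivity there shows that $\theta^2$ is already an isomorphism; indeed the decomposable part $E^1\cdot E^1$ surjects onto $A^1\cdot A^1=A^2$, so the monomorphism $\theta^2$ forces $E^2_{\alpha\beta}=E^1\cdot E^1$ and identifies $A$ with $\q E$ through degree~$2$. Up to here only quadraticity of $A$ has been used.

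The substance of the proof, and the step I expect to be the main obstacle, is injectivity of $\theta^n$ for $n\ge3$, where the full Koszul property must enter. Here I would construct the minimal linear (Koszul) resolution of each simple $I_\beta$ as a Postnikov system in $\sD$. Set $Z_0=I_\beta$; having produced $Z_n$ and knowing it to be of the form $W_n[-n]$ with $W_n\in\sC$, form the universal morphism into $Z_n$ from a direct sum of shifted simples $I_\gamma[-n-1]$, one summand per basis vector of $\Ext^1_\sC(I_\gamma,W_n)$, which is legitimate because $\Hom_\sD(I_\gamma[-n-1],Z_n)=\Hom_\sD(I_\gamma,W_n[1])=\Ext^1_\sC(I_\gamma,W_n)$, the last equality being $\theta^1$ (known for all objects); complete it to a triangle defining $Z_{n+1}$. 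The key inductive claim is that Koszulity of $A$ forces each $Z_n$ to be a single shifted object $W_n[-n]$ of $\sC$, with $W_n$ the $n$-th syzygy of $I_\beta$ — equivalently, that no higher $\sC$-cohomology appears along the tower. This is precisely the statement that the minimal resolution of $A^0$ over $A$ is linear, i.e.\ the definition of Koszulity, and it is exactly what fails to follow from quadraticity alone. Granting the claim, the genuine objects $W_n\in\sC$ assemble into a resolution of $I_\beta$ that computes $\Ext^*_\sC(I_\alpha,I_\beta)$ through its syzygy long exact sequences, while the same tower viewed in $\sD$ computes $\Hom_\sD(I_\alpha,I_\beta[*])$, and the comparison between the two computations is $\theta$, which is thereby forced to be an isomorphism in every degree. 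The delicate points are controlling the cohomology of the syzygy objects at each stage (where Koszulity does its real work) and handling the finiteness needed for the universal extensions to stay inside $\sC$ — if the relevant $\Ext^1$ spaces are not locally finite one must carry out the construction in a suitable ind- or pro-completion — but these are technical overhead around the conceptual core, namely the linearity of the syzygy tower.
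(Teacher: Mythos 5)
Your frame is right where it overlaps with the paper: the reduction to irreducible $X$, $Y$ by induction on length is exactly how the paper passes from Theorem~\ref{t-structure-koszulity-stronger-version} to Theorem~\ref{t-structure-koszulity-main-theorem}, and your surjectivity argument (degree-one generation of the Koszul ring $A$ plus $\theta$ being an isomorphism in degrees $\le1$) and the degree-$2$ analysis are correct. The gap is in the part you yourself flag as the substance, the degrees $n\ge3$. The tower you actually construct does not engage Koszulity at all: if $Z_{n+1}$ is the cone or fibre of the universal morphism $\bigoplus_\gamma I_\gamma[-n-1]\rarrow W_n[-n]$ classified by $\Ext^1_\sC(I_\gamma,W_n)$, then the long exact sequence of t-structure cohomology shows that $Z_{n+1}$ is \emph{automatically} a single shifted object of $\sC$ (the universal extension of the semisimple object by $W_n$); purity costs nothing and your ``key inductive claim'' is vacuous. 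What the tower encodes is just the chain of universal extensions $I_\beta=W_0\subset W_1\subset\dotsb$ with semisimple quotients, which computes $\Ext^*_\sC(I_\alpha,I_\beta)$; the assertion that ``the same tower viewed in $\sD$ computes $\Hom_\sD(I_\alpha,I_\beta[*])$'' is precisely the theorem and is not justified---the exact couple attached to the tower converges to $\Hom_\sD(I_\alpha,I_\beta[*])$ only given a vanishing of $\varinjlim_m\Hom_\sD(I_\alpha,W_m[j])$ equivalent to what you are proving. (The paper explicitly warns that degree-one generation, or even quadraticity, of the Hom-ring \emph{between irreducibles} is insufficient, which rules out any argument of this soft effaceability type.) A version of your strategy in which Koszulity genuinely enters---attaching semisimple stages realizing the Koszul syzygy spaces along classes in $\Hom_\sD(S_m,T_{m-1}[1])$, with the existence of the attaching maps and the purity of the cones as nontrivial inductive statements---does exist; it is the ``standard approach'' of \cite[Section~12]{Kel} and \cite[Section~A.2]{Pcosh} that the paper mentions as working only in particular cases, but carrying it out is the entire proof and you have not done it.

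Moreover, that route needs finiteness the theorem does not assume: $\sD$ is merely a small triangulated category, not linear over a field, and the set of irreducibles and the groups $\Ext^1_\sC(I_\gamma,W_n)$ (vector spaces over the division rings $\End(I_\gamma)$) may be infinite, so your universal-extension objects need not exist in $\sD$, and no ind- or pro-completion is supplied. This is exactly why the paper argues differently: it proves the purely abelian-categorical Theorem~\ref{ext-in-abelian-category-main-theorem} (the big graded Ext-ring of a finite-length abelian category satisfying (I)--(II) is quadratic), via the passage to a filtered exact category and its associated graded category, and then deduces the present theorem by observing that conditions (I)--(II) descend from $A$ to the Yoneda ring $E$ along the map $E\rarrow A$ (an isomorphism in degree~$1$ and a monomorphism in degree~$2$), forcing $E\simeq\q A=A$. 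To complete your proof you would either have to add finiteness hypotheses and perform the obstruction-theoretic induction on the Koszul tower in full, or switch to the quadraticity criterion.
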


 The above theorem is deduced from the following result about
the Ext rings between irreducible objects in abelian categories,
which is a categorical generalization of
Theorem~\ref{with-vishik-main-theorem}.

\begin{thm} \label{ext-in-abelian-category-main-theorem}
 Let\/ $\sC$ be a small abelian category such that every object of\/
$\sC$ has finite length and  let $I_\alpha$ be the irreducible objects
of\/~$\sC$.
 Consider the big graded ring $B$ with the components
$B^n_{\alpha,\beta}=\Ext^n_\sC(I_\alpha,I_\beta)$.
 Assume that
\begin{enumerate}
\renewcommand{\theenumi}{\Roman{enumi}}
  \item the quadratic part\/ $\q B$ of the big graded ring $B$
        is Koszul; and
  \item the morphism of big graded rings\/ $\q B\rarrow B$ is
        an isomorphism in the degree~$n=2$ and a monomorphism in
        the degree~$n=3$.
\end{enumerate}
 Then the big graded ring $B$ is quadratic (and consequently, Koszul).
\end{thm}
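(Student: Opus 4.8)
The plan is to reduce Theorem~\ref{ext-in-abelian-category-main-theorem} to the coalgebra statement of Theorem~\ref{with-vishik-main-theorem}, of which it is the ``many objects'' incarnation. Put $B^0=\prod_\alpha B^0_{\alpha,\alpha}=\prod_\alpha\operatorname{End}_\sC(I_\alpha)$; by Schur's lemma each $B^0_{\alpha,\alpha}$ is a division ring, so $B^0$ is (classically) semisimple, as required of the degree-zero part of a big graded ring. Since $\sC$ is small and all its objects have finite length, its ind-completion is a locally finite Grothendieck category with enough injectives whose simple objects are the $I_\alpha$; by the coalgebra reconstruction theory this identifies $\sC$ with the category $C\comodl$ of finite-length left comodules over a coring $C$ over the base $B^0$. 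The finite-length hypothesis is exactly what makes $C$ conilpotent: every object of $\sC$ is a finite iterated extension of simples, hence is annihilated by a sufficiently high iterated comultiplication. Under this equivalence the big graded ring $B$ is computed by the reduced cobar construction $\Cob^\bu(C)$ relative to the coaugmentation $B^0\rarrow C$, and is thereby identified with the cohomology big graded ring $H^*(C)$.

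With this dictionary, conditions~(I) and~(II) are precisely the big graded translations of the two bulleted hypotheses of Theorem~\ref{with-vishik-main-theorem}, and I would invoke (or re-derive) the generalization of that theorem to conilpotent corings over a semisimple base. The argument is a bootstrapping induction on the internal degree. Writing $Q=\q B$, hypothesis~(I) says $Q$ is Koszul, so the Koszul complex of $Q$ is a linear free resolution of $B^0$ and $\Tor^Q_{ij}(B^0,B^0)$ vanishes off the diagonal $i=j$. The homomorphism $Q\rarrow B$ is an isomorphism through degree~$2$ and a monomorphism in degree~$3$ by~(II), which fixes the start of the comparison of the two bar constructions. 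One then lifts the linear $Q$-resolution of $B^0$ to a minimal $B$-resolution, one internal degree at a time: Koszulity of $Q$ guarantees that each successive stage can be kept linear, while the only potential obstruction---an unwanted syzygy that would record a higher-degree relation of $B$---sits where $\Tor^B_3$ is controlled and is killed by the degree-$3$ monomorphism in~(II). It follows that $\Tor^B_{1j}$ and $\Tor^B_{2j}$ are concentrated on the diagonal, i.e.\ $B$ is generated in degree~$1$ with relations in degree~$2$, so $B$ is quadratic. Finally, once $B$ is quadratic the map $Q=\q B\rarrow B$ is an isomorphism, whence $B\simeq\q B$ is Koszul by~(I); this is the parenthetical ``consequently, Koszul''.

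The main obstacle I anticipate lies not in the combinatorics of the induction, which mirrors~\cite{PV}, but in transporting the whole Koszul-duality apparatus from a field base carrying a single one-dimensional simple object to the semisimple, generally noncommutative, base $B^0$ with an arbitrary set of simples $I_\alpha$. Concretely, one must verify that quadratic duality, the cobar and Koszul complexes, and the very notion of the quadratic part $\q B$ all behave correctly for corings over $B^0$---the appropriate framework being the flatness-free theory of Koszul big graded rings of~\cite[Section~7]{Partin}---and that the reconstruction of $\sC$ as $C\comodl$ is not disturbed by the division-ring endomorphisms of the $I_\alpha$. Once these base-change points are settled, the inductive core of the proof of Theorem~\ref{with-vishik-main-theorem} applies essentially verbatim, with the ground field replaced throughout by the semisimple ring $B^0$.
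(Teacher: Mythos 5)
Your overall strategy---reconstruct $\sC$ from a coalgebra-like structure and then transport the argument of Theorem~\ref{with-vishik-main-theorem}---is the first of the two routes the paper itself mentions, but the reconstruction step as you state it has a genuine gap. The theorem does \emph{not} assume $\sC$ to be linear over a field (the paper stresses this: modules of finite length over a complete commutative local ring, or finite modules over a profinite group, are admissible choices of~$\sC$). Consequently $\sC$ cannot in general be identified with comodules over a coring $C$ over $B^0$: a comodule over a coring over $B^0$ is in particular a $B^0$\+module, whereas, e.g., in the category of finite-length $\Z_p$\+modules the object $\Z/p^2$ is not a module over $\operatorname{End}_{\sC}(\Z/p)=\Z/p$. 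The correct reconstruction is Gabriel's~\cite{Gab}: $\sC$ is the category of finitely generated discrete modules over a \emph{pseudo-compact} topological ring, which is not the continuous dual of a coring over $B^0$. This is precisely why the proof actually carried out in~\cite[Sections~3--4 and Theorem~8.4]{Partin} abandons the (co)algebraic model and works purely categorically, with the filtered exact category $\sF$ and its associated graded abelian category $\sG$, using the isomorphism~\eqref{filtered-inductive-limit-isomorphism} and the Bockstein-type long exact sequence~\eqref{associated-graded-bockstein-sequence} as substitutes for the spectral sequence of the coaugmentation filtration. (Your coring picture is fine in the $k$\+linear, locally finite case, but that is a special case of the theorem.)

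The second problem is the inductive core you describe. As written it is a purely formal argument about the graded ring $B$ (lift a linear $Q$\+resolution of $B^0$ to a minimal $B$\+resolution; the obstruction ``sits where $\Tor^B_3$ is controlled''), and no such formal argument can exist, because conditions~(I) and~(II) do not imply quadraticity for an abstract big graded ring. For instance, take $B=Q\oplus V$ with $Q$ Koszul, $V$ a nonzero square-zero ideal concentrated in degree~$5$ with $Q_+V=VQ_+=0$: then $\q B=Q$, conditions~(I) and~(II) hold, yet $B$ is not generated in degree~$1$. The theorem is true only because $B$ is the Ext ring of a category all of whose objects have finite length, and any proof must exploit the resulting socle filtration --- in the coalgebra language of~\cite{PV}, one proves inductively over the coaugmentation filtration that $\gr^NC$ is the quadratic coalgebra dual to $\q H^*(C)$ and then that the spectral sequence (or its categorical surrogate) degenerates. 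You gesture at this by saying the inductive core of~\cite{PV} ``applies essentially verbatim,'' which is the right sentiment, but the induction you actually write down is a different and unworkable one, and the genuinely delicate point --- replacing that spectral sequence by the pair \eqref{filtered-inductive-limit-isomorphism}, \eqref{associated-graded-bockstein-sequence} in the non-linear setting --- is exactly what your sketch omits.
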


 Actually, the following slightly stronger form of
Theorem~\ref{t-structure-koszulity-main-theorem}, generalizing both
Theorems~\ref{t-structure-koszulity-main-theorem}
and~\ref{ext-in-abelian-category-main-theorem}, can be obtained from
Theorem~\ref{ext-in-abelian-category-main-theorem}.
 It is a categorical generalization of~\cite[Theorem~5.2]{Pbogom}.

\begin{thm} \label{t-structure-koszulity-stronger-version}
 Let\/ $\sC$ be the core of a t\+structure on a small triangulated
category\/~$\sD$.
 Suppose that every object of\/ $\sC$ has finite length and
let $I_\alpha$ be the irreducible objects of\/~$\sC$.
 Consider the big graded rings $A$ and $B$ with the components
$A^n_{\alpha,\beta}=\Ext^n_\sC(I_\alpha,I_\beta)$
and $B^n_{\alpha,\beta}=\Hom_\sD(I_\alpha,I_\beta[n])$.
 Then whenever the big graded ring $B$ satisfies the assumptions
(I) and~(II) of Theorem~\ref{ext-in-abelian-category-main-theorem},
the natural morphism of big graded rings $A\rarrow B$ induces
an isomorphism $A\simeq\q B$.
\end{thm}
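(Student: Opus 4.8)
The plan is to deduce the statement from Theorem~\ref{ext-in-abelian-category-main-theorem}, applied \emph{not} to $B$ but to $A$. This is the crucial point: the ring $B$ is built out of the triangulated category $\sD$ and is not manifestly a ring of Yoneda Ext groups, so Theorem~\ref{ext-in-abelian-category-main-theorem} does not apply to it directly. The ring $A$, on the other hand, is exactly the big graded ring of Ext groups between the irreducibles of the core $\sC$, which is a small abelian category all of whose objects have finite length; so Theorem~\ref{ext-in-abelian-category-main-theorem} does apply to $A$. The bridge between the two rings is the natural morphism of the statement, namely the family of comparison maps $\theta^*_{\sC,\sD}$, which is multiplicative and hence a morphism of big graded rings $\theta\:A\rarrow B$. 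I will use throughout that $\theta^n_{\sC,\sD}$ is an isomorphism for $n\le1$ and a monomorphism for $n=2$; in particular $A$ and $B$ have the same degree-zero semisimple ring $A^0\simeq B^0$ and the same space of degree-one generators, which I denote by $V=A^1\simeq B^1$.

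First I would compare the quadratic parts. Writing $\mu_A$ and $\mu_B$ for the degree-two multiplication maps $V\ot V\rarrow A^2$ and $V\ot V\rarrow B^2$ (the tensor products taken over $A^0=B^0$), the identity $\mu_B=\theta^2\mu_A$ together with the injectivity of $\theta^2$ forces $\ker\mu_A=\ker\mu_B$. Since the spaces of quadratic relations thus coincide, the induced map $\q\theta\:\q A\rarrow\q B$ is an isomorphism of big graded rings. Hypothesis~(I) for $B$ then gives that $\q A\simeq\q B$ is Koszul, which is precisely hypothesis~(I) for $A$.

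Next I would transfer hypothesis~(II) to $A$. In degree two, the composite $(\q A)^2\rarrow A^2\rarrow B^2$, whose second arrow is $\theta^2$, is carried by $\q\theta$ to the canonical map $(\q B)^2\rarrow B^2$, an isomorphism by hypothesis~(II) for $B$. As $\theta^2$ is a monomorphism while this composite is an isomorphism, both $\theta^2$ and the canonical map $(\q A)^2\rarrow A^2$ are isomorphisms. In degree three, the composite $(\q A)^3\rarrow A^3\rarrow B^3$ is identified through $\q\theta$ with the canonical map $(\q B)^3\rarrow B^3$, which is a monomorphism by hypothesis~(II) for $B$; consequently $(\q A)^3\rarrow A^3$ is a monomorphism as well. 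Hence $A$ satisfies both (I) and~(II).

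It then remains to apply Theorem~\ref{ext-in-abelian-category-main-theorem} to $A$, concluding that $A$ is quadratic, that is, that the canonical map $\q A\rarrow A$ is an isomorphism. Combined with the isomorphism $\q A\simeq\q B$ from the second step, this yields $A\simeq\q B$, the isomorphism being realized by $\theta$ as asserted. The one step calling for care is the degree-three half of hypothesis~(II) for $A$: the general t\+structure formalism controls $\theta^n$ only for $n\le2$ and says nothing about $\theta^3$, so it cannot be invoked directly; the argument instead factors through $(\q A)^3\rarrow A^3\rarrow B^3$ and extracts the needed monomorphism from hypothesis~(II) for $B$. Every other step is a routine diagram chase anchored to the identification $\q A\simeq\q B$ established at the outset.
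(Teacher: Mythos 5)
Your proof is correct and follows essentially the same route as the paper's: both transfer hypotheses (I) and~(II) from $B$ to $A$ using the fact that the comparison morphism $A\rarrow B$ is an isomorphism in degree~$1$ and a monomorphism in degree~$2$, and then apply Theorem~\ref{ext-in-abelian-category-main-theorem} to $A$ to conclude that $A$ is quadratic and hence $A\simeq\q B$. The paper states the transfer step as an immediate consequence, whereas you spell out the diagram chase, including the correct observation that the degree-three half of~(II) must be extracted via the factorization $(\q A)^3\rarrow A^3\rarrow B^3$ rather than from any control over $\theta^3$ itself.
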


 Let us clarify the following point, which otherwise might become
a source of confusion.
 It is well-known~\cite{BBD,BGSch,Partin} that for a t-structure with
the core $\sC$ on a triangulated category $\sD$ the natural
maps $\theta^n_{\sC,\sD}(X,Y)\:\Ext^n_\sC(X,Y)\rarrow\Hom_\sD(X,Y[n])$
are isomorphisms for all $X$, $Y\in\sC$, \,$n\ge0$ if and only if
any element in $\Hom_\sD(X,Y[n])$ can be decomposed into
a product of $n$~elements from the groups $\Hom_\sD(U,V[1])$
with $U$, $V\in\sC$.
 However, this is only true because one considers
the degree-one generation condition for $X$ and $Y$ running over
\emph{all} the objects of $\sC$ and not just the irreducible objects
(cf.~\cite[Proposition~B.1]{Partin}).
 The Koszulity condition in
Theorem~\ref{t-structure-koszulity-main-theorem} is much stronger
than a degree-one generation condition, but it is applied
to a much smaller algebra of homomorphisms between the irreducible
objects of~$\sC$.
 On the other hand, the big graded ring of Yoneda extensions between
all the objects of a given abelian category is always Koszul in
an appropriate sense~\cite[Example~8.3]{Partin}.

\begin{ex}
 Given a discrete group $\Gamma$ and a field~$k$, set $C$ to be
the coalgebra of (functions on) the proalgebraic completion of
$\Gamma$ over~$k$.
 Then the category $\sC$ of finite-dimensional representations
of a group $\Gamma$ over a field~$k$ is isomorphic to the category of
finite-dimensional left $C$\+comodules.
 Let $\sD$ denote the full triangulated subcategory of the bounded
derived category $\sD^\b(k[\Gamma]\modl)$ of arbitrary $\Gamma$\+modules
over~$k$ generated by the subcategory of finite-dimensional modules
$\sC\subset k[\Gamma]\modl$.
 Then $\sC$ is the core of a (bounded) t\+structure on $\sD$, so
Theorems~\ref{t-structure-koszulity-main-theorem}--%
\ref{t-structure-koszulity-stronger-version} are applicable whenever
the Koszulity assumption or the assumptions (I\+-II) are satisfied.
 This would allow to obtain a comparison between the cohomology
of (the proalgebraic group corresponding to) the coalgebra/commutative
Hopf algebra $C$ and the discrete group $\Gamma$ with
finite-dimensional coefficients.
\end{ex}

 Let us emphasize that we do \emph{not} assume the abelian category
$\sC$ or the triangulated category $\sD$ to be linear over a field in
the above theorems.
 E.~g., the category of finite modules over a profinite group is fine
in the role of $\sC$, as is the category of modules of finite
length over any complete commutative local ring, etc.

\begin{proof}[Proof of
Theorem~\ref{t-structure-koszulity-stronger-version}]
 This is a particular case of~\cite[Corollary~8.5]{Partin}.
 By a general property of t\+structures
(see~\cite[Remarque~3.1.17]{BBD} or~\cite[Corollary~A.17]{Partin}),
the morphism of big graded rings $A\rarrow B$ is an isomorphism
in degree~$1$ and a monomorphism in degree~$2$ (cf.\ the proof
of Theorem~\ref{pronilpotent-completion-coalgebra-cohomology}).
 It follows that if the ring $B$ satisfies the conditions~(I)
and~(II), then so does the ring~$A$.
 By Theorem~\ref{ext-in-abelian-category-main-theorem}, one can
then conclude that the big graded ring $A$ is quadratic, and
hence $A\simeq \q B$.
\end{proof}

\begin{proof}[Proof of
Theorem~\ref{t-structure-koszulity-main-theorem}]
 According to Theorem~\ref{t-structure-koszulity-stronger-version},
the map $\theta_{\sC,\sD}^n(X,Y)$ is an isomorphism whenever
the objects $X$ and $Y\in\sC$ are irreducible.
 The general case follows by induction on the lengths.
\end{proof}

\begin{proof}[Brief sketch of proof of
Theorem~\ref{ext-in-abelian-category-main-theorem}]
 This is a particular case of~\cite[Theorem~8.4]{Partin}.

 Let us start with a comment on the proof of
Theorem~\ref{with-vishik-main-theorem}.
 It is based on two ingredients: the basic theory of quadratic and
Koszul graded algebras and coalgebras over a field, and the spectral
sequence connecting the cohomology of a conilpotent coalgebra $C$ with
the cohomology of its associated graded coalgebra with respect to
the coaugmentation filtration.
 In the case of Theorem~\ref{ext-in-abelian-category-main-theorem},
first of all one needs to develop the basic theory of quadratic and
Koszul big graded rings, which is done (in a greater generality)
in~\cite[Sections~6\+-7]{Partin}.
 Then one has to work out the passage from the ungraded category $\sC$
to its graded version.

 One possible approach to the latter task would be to associate
a coalgebra-like algebraic structure with an abelian category consisting
of objects of finite length, then filter that structure and pass to
the associated graded one.
 The required class of algebraic structures was introduced
in~\cite[\S\,IV.3\+4]{Gab}.
 An abelian category consisting of objects of finite length is
equivalent to the category of finitely generated discrete modules over
a \emph{pseudo-compact} topological ring.
 One would have to embed the category of finitely generated discrete
modules into the category of pseudo-compact modules in order to do
cohomology computations with projective resolutions.

 There is a more delicate approach developed
in~\cite[Sections~3\+-4]{Partin}, which is purely categorical.
  One associates with an abelian category $\sC$ consisting of objects
of finite length the exact category $\sF$ whose objects are
the objects of $\sC$ endowed with a finite filtration for which all
the successive quotient objects are semisimple.
 Then one needs to pass from the filtered exact category $\sF$ to
the associated graded abelian category~$\sG$.
 The main property connecting the categories $\sC$ and $\sF$ is
the natural isomorphism
\begin{equation}  \label{filtered-inductive-limit-isomorphism}
 \Ext^n_\sC(u(X),u(Y))\.\simeq\.
 \varinjlim\nolimits_{m\to+\infty}\Ext^n_\sF(X,Y(m)),
\end{equation}
where $u\:\sF\rarrow\sC$ is the functor of forgetting the filtration
and $Z\longmapsto Z(m)$ denotes the filtration shift.
 The main property connecting the categories $\sF$ and $\sG$ is
the long exact sequence
\begin{multline} \label{associated-graded-bockstein-sequence}
 \dotsb\lrarrow\Ext^n_\sF(X,Y(-1))\lrarrow\Ext^n_\sF(X,Y) \\ \lrarrow
 \Ext^n_\sG(\gr X\;\gr Y)\lrarrow\Ext^{n+1}_\sF(X,Y(-1))\lrarrow\dotsb
\end{multline}
for any two objects $X$, $Y\in\sF$, where $\gr\:\sF\rarrow\sG$
is the functor assigning to a filtered object its associated
graded object.
 The construction of the category $\sG$ is a particular case of
a general construction of the reduction of an exact category by
a graded center element developed in the paper~\cite{Pred}.

 The isomorphism~\eqref{filtered-inductive-limit-isomorphism} and
the long exact sequence~\eqref{associated-graded-bockstein-sequence}
taken together are used in lieu of the spectral sequence connecting
the cohomology of a coalgebra $C$ and its associated graded coalgebra
$\gr_NC$ by the coaugmentation filtration $N$ in order to extend
the argument from~\cite[Main Theorem~3.2]{PV} from conilpotent
coalgebras to abelian categories consisting of objects of
finite length.
\end{proof}

\Section{Koszulity Implies Quasi-Formality}
\label{koszul-implies-quasiform}

 Generally speaking, \emph{Massey products} are natural partially
defined multivalued polylinear operations in the cohomology algebra
of a DG\+algebra which are preserved by quasi-isomorphisms of
DG\+algebras.
 There are several different ways to construct such operations.
 We start with introducing the construction most relevant for our
purposes, and later explain how it is related to a more elementary
construction.
 The most relevant reference for us is~\cite{May1}; see also
the earlier paper~\cite{Sta}, the heavier~\cite{May2}, and
the later work~\cite[Section~5]{GM}.

 Let $A^\bu=(A^*\;d\:A^i\to A^{i+1})$ be a nonzero DG\+algebra over
a field~$k$; suppose that it is endowed with an augmentation
(DG\+algebra morphism) $A^\bu\rarrow k$ and denote the augmentation
kernel ideal by $A_+^\bu=\ker(A^\bu\to k)$.
 By the definition, the \emph{bar construction} $\Br^\bu(A^\bu)$ of
an augmented DG\+algebra $A^\bu$ is the tensor coalgebra
$$\textstyle
 \Br(A)=\bigoplus_{n=0}^\infty A_+[1]^{\ot n}
$$
of the graded vector space $A_+[1]$ obtained by shifting by~$1$
the cohomological grading of the augmentation ideal~$A_+$.
 The grading on $\Br(A)$ is induced by the grading of $A_+[1]$.
 Alternatively, one can define the bar construction $\Br(A)$ as
the direct sum of the tensor powers $A_+^{\ot n}$ of the vector space
$A_+$ and endow it with the total grading equal to the difference $i-n$
of the grading~$i$ induced by the grading of $A_+$ and the grading~$n$
by the number of tensor factors.

 The differential on $\Br^\bu(A^\bu)$ is the sum of two summands $d+\d$,
the former of them induced by the differential on $A_+^\bu$ and
the latter one by the multiplication in~$A_+^\bu$.
 One has to work out the plus/minus signs in order to make the total
differential on $\Br^\bu(A^\bu)$ square to zero; this is
a standard exercise.

 One defines a natural increasing filtration on the complex
$\Br^\bu(A^\bu)$ by the rule
$$\textstyle
 F_p\Br^\bu(A^\bu)=\bigoplus_{n=0}^p A_+[1]^{\ot n}. 
$$
 The associated graded complex $\gr^F\Br^\bu(A^\bu)$ of the complex
$\Br^\bu(A^\bu)$ by the filtration $F$ is naturally identified with
the graded vector space $\Br(A)$ endowed with the differential~$d$
induced by the differential on~$A_+^\bu$.
 The spectral sequence $E_r^{pq}$ of the filtered complex
$\Br^\bu(A^\bu)$ has the initial page
$$
 E_1^{pq}=(H^*(A_+^\bu)^{\ot p})^q,
$$
where the grading~$q$ on the tensor power $H^*(A_+^\bu)^{\ot p}$ of
the augmentation ideal $H^*(A_+^\bu)$ of the cohomology algebra
$H^*(A^\bu)$ of the DG\+algebra $A$ is induced by the cohomological
grading on $H^*(A_+^\bu)$.
 The differentials are
$$
 d_r^{pq}\:E_r^{pq}\lrarrow E_r^{p-r,q-r+1},
$$
and the limit page is given by the rule
$$
 E_\infty^{pq}=\gr^F_pH^{q-p}\Br^\bu(A^\bu).
$$

 The cohomology of the complex $\Br^\bu(A^\bu)$ are known as
the \emph{differential Tor spaces}~\cite{EM1,GM}
$H^*\Br^\bu(A^\bu)=\Tor^{A^\bu}_*(k,k)$ (``of the first
kind''~\cite{HMS}) over the DG\+alge\-bra~$A^\bu$.
 This is the derived functor of tensor product of DG\+modules
over $A^\bu$ defined on the ``conventional'' derived categories
of DG\+modules (obtained by inverting the DG\+module
morphisms inducing isomorphisms of the cohomology modules);
see~\cite{Kel0}, \cite{Hin1}, or~\cite[Section~1]{Pkoszul}.
 The spectral sequence $E_r^{pq}$ is called the \emph{algebraic
Eilenberg--Moore spectral sequence} associated with
a DG\+algebra~$A^\bu$ \cite{EM1,GM,HS}.

 The differential~$d_1$ is induced by the multiplication in
the cohomology algebra $H^*(A^\bu)$, and whole the page $E_1$ is
simply the bar-complex of the cohomology algebra $H^*(A^\bu)$.
 Hence the page $E_2$ can be computed as
$$
 E_2^{pq}=\Tor^{H^*(A^\subbu)}_{pq}(k,k),
$$
where the first grading~$p$ on the Tor spaces in the right-hand
side is the conventional homological grading of the Tor and
the second grading~$q$ is the ``internal'' grading induced by
the cohomological grading on the algebra $H^*(A^\bu)$.

 The differentials $d_r^{pq}$, \,$r\ge2$, in the algebraic
Eilenberg--Moore spectral sequence $E_r^{pq}=E_r^{pq}(A^\bu)$ associated
with an augmented DG\+algebra $A^\bu$ are, by the definition,
the \emph{Massey products} in the cohomology  algebra $H^*(A^\bu)$
that we are interested in.
 An augmented DG\+algebra $A^\bu$ is called \emph{quasi-formal} if
the spectral sequence $E_r^{pq}(A^\bu)$ degenerates at the page~$E_2$,
that is all the Massey products $d_r^{pq}$, \,$r\ge2$, vanish.

 An augmented DG\+algebra $A^\bu$ is called \emph{formal} (in
the class of augmented DG\+algebras) if it can be connected with
its cohomology algebra $H^*(A^\bu)$, viewed as an augmented
DG\+algebra with zero differential and the augmentation induced
by that of $A^\bu$, by a chain of quasi-isomorphisms of augmented
DG\+algebras.

\begin{prop}
 Any formal augmented DG\+algebra is quasi-formal.
\end{prop}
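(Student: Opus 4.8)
The plan is to show that a quasi-isomorphism of augmented DG\+algebras induces an isomorphism of the associated algebraic Eilenberg--Moore spectral sequences from the page $E_1$ onwards, and then to observe that the spectral sequence of the cohomology algebra $H^*(A^\bu)$, regarded as an augmented DG\+algebra with zero differential, degenerates at $E_2$. Combining these two facts along the chain of quasi-isomorphisms connecting $A^\bu$ with $H^*(A^\bu)$ that formality provides, one concludes that all the Massey differentials $d_r^{pq}$, $r\ge2$, vanish for $A^\bu$ as well.

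First I would check functoriality. A morphism $f\:A^\bu\rarrow B^\bu$ of augmented DG\+algebras carries $A_+^\bu$ into $B_+^\bu$, and hence induces a morphism of tensor coalgebras $\Br^\bu(f)\:\Br^\bu(A^\bu)\rarrow\Br^\bu(B^\bu)$ commuting with both summands $d$ and $\d$ of the differential and preserving the filtration $F_p$ by the number of tensor factors. Thus $f$ induces a morphism of filtered complexes, and consequently a morphism of spectral sequences $E_r^{pq}(f)$. On the initial page $E_1^{pq}=(H^*(A_+^\bu)^{\ot p})^q$ this morphism is the map $(H^*(f))^{\ot p}$ induced by $f$ on the cohomology of the augmentation ideals. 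When $f$ is a quasi-isomorphism, $H^*(f)$ is an isomorphism, and since we work over a field, so is each of its tensor powers; hence $E_1^{pq}(f)$ is an isomorphism. By the comparison theorem for spectral sequences, $E_r^{pq}(f)$ is then an isomorphism for all $r\ge1$, compatibly with the differentials $d_r^{pq}$. In particular, $A^\bu$ is quasi-formal if and only if $B^\bu$ is.

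Next I would treat the formal model. For the graded algebra $H^*=H^*(A^\bu)$ equipped with the zero differential, the summand $d$ of the differential on $\Br^\bu(H^*)$ vanishes, so the total differential on the bar construction reduces to the multiplication part $\d$, which strictly lowers the number of tensor factors by one. Thus $\Br^\bu(H^*)$ is a bicomplex one of whose two differentials is zero; the $F$\+filtration spectral sequence of such a bicomplex degenerates at $E_2$, with $E_2^{pq}=\Tor^{H^*}_{pq}(k,k)$ equal to the associated graded of the total cohomology. Here the nonnegativity $H^0(A^\bu)\simeq k$ guarantees that $H^*(A_+^\bu)$ lives in positive degrees, so $E_1^{pq}$ is concentrated in the range $0\le p\le q$ and convergence is automatic. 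Hence $H^*$ with zero differential is quasi-formal.

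Finally, formality of $A^\bu$ provides a chain of quasi-isomorphisms of augmented DG\+algebras linking $A^\bu$ to $H^*(A^\bu)$. Each link is an isomorphism on the pages $E_r$, $r\ge1$, by the first step, so the vanishing of $d_r^{pq}$ for $r\ge2$ established for the formal model in the second step propagates back to $A^\bu$. I expect the main obstacle to be the first step: one must verify that the bar construction is genuinely functorial and filtration-preserving and that the comparison isomorphism on $E_1$ really forces isomorphisms on all later pages intertwining the Massey differentials. Once that is in place, the degeneration for the zero-differential model is a standard feature of bicomplexes with a vanishing differential, and the conclusion follows formally.
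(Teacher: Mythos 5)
Your proposal is correct and follows essentially the same route as the paper: quasi-isomorphisms of augmented DG\+algebras induce isomorphisms of the Eilenberg--Moore spectral sequences from the page $E_1$ onward (hence preserve the Massey differentials), and the spectral sequence of a DG\+algebra with zero differential degenerates since one of the two differentials of the bicomplex vanishes. The extra details you supply (functoriality of the bar construction, the identification of $E_1(f)$ with $(H^*(f))^{\ot p}$) are exactly what the paper's shorter proof leaves implicit.
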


\begin{proof}
 Notice that any morphism of augmented DG\+algebras $A^\bu\rarrow B^\bu$
induces a morphism of spectral sequences $E_r^{pq}(A^\bu)\rarrow
E_r^{pq}(B^\bu)$.
 When the morphism $A^\bu\rarrow B^\bu$ is a quasi-isomorphism,
the induced morphism of spectral sequences is an isomorphism on
the pages $E_1$, and consequently also on all the higher pages.
 So the Massey products are preserved by quasi-isomorphisms of
augmented DG\+algebras.

 Therefore, whenever an augmented DG\+algebra $A^\bu$ is connected with
an augmented DG\+algebra $B^\bu$ by a chain of quasi-isomorphisms of
augmented DG\+algebras, an augmented DG\+algebra $A^\bu$ is quasi-formal
if and only if an augmented DG\+algebra $B^\bu$ is.
 Since the Massey products in a DG\+algebra with zero differential
clearly vanish (as do the higher differentials in the spectral sequence
of any bicomplex with one of the two differentials vanishing),
the desired assertion follows.
\end{proof}

 An extension of (a part of) the canonical, partially defined,
multivalued Massey operations on the cohomology algebra of
a DG\+algebra $A^\bu$ to total, single-valued polylinear maps,
which taken together are defined up to certain transformations,
is called the \emph{$\mathrm A_\infty$\+algebra} structure on
the cohomology algebra $H^*(A^\bu)$ of a DG\+algebra~$A^\bu$
\cite{Sta,Kad0,Kad,Gug,LPWZ}.
 The $\mathrm A_\infty$\+algebra structure on $H^*(A^\bu)$ determines
a DG\+algebra $A^\bu$ up to quasi-isomorphism.
 Thus, while vanishing of the Massey operations in the cohomology
only makes a DG\+algebra $A^\bu$ \emph{quasi-formal}, vanishing of
the higher operations in (a certain representative of
the $\mathrm A_\infty$\+isomorphism class of)
the $\mathrm A_\infty$\+algebra structure on $H^*(A^\bu)$ would
actually mean that the DG\+algebra $A^\bu$ is \emph{formal}.

\bigskip

 Let $m\ne0$ be an integer.
 Suppose that the cohomology algebra $H^*(A^\bu)$ is concentrated in
the cohomological gradings $q=mn$, \,$n=0$,~$1$, $2$,~\dots, one has
$H^0(A^\bu)=k$, and the algebra $H^*(A^\bu)$ is Koszul in the grading
rescaled by~$m$, i.~e., one has
$$
 \Tor^{H^*(A^\subbu)}_{pq}(k,k)=0  \qquad\text{for \,$mp\ne q$.}
$$
 Then all the differentials $d_r^{pq}$, \,$r\ge2$, vanish for
``dimension'' (bigrading) reasons, and the DG\+algebra $A^\bu$ is
quasi-formal (cf.~\cite{Ber}).
 One can say that this is an instance of \emph{intrinsic
quasi-formality}, i.~e., a situation when any augmented DG\+algebra
with a given cohomology algebra is quasi-formal.
 In this paper, we are interested in the case $m=1$, i.~e.,
the situation when the augmentation ideal $H^*(A^\bu_+)$ of
the cohomology algebra $H^*(A^\bu)$ is concentrated in
the cohomological degrees~$1$, $2$, $3$,~\dots

\begin{cor}
 Suppose that the cohomology algebra $H^*(A^\bu)$ of an augmented
DG\+algebra $A^\bu$ is positively cohomologically graded and Koszul in
its cohomological grading.
 Then the augmented DG\+algebra $A^\bu$ is quasi-formal.
\end{cor}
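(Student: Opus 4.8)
The plan is to recognize this Corollary as the special case $m=1$ of the \emph{intrinsic quasi-formality} observation in the paragraph immediately preceding it, and to lay out the underlying bigrading argument in full. Since $H^*(A^\bu)$ is positively cohomologically graded with $H^0(A^\bu)=k$, we are exactly in the $m=1$ situation, where the augmentation ideal $H^*(A_+^\bu)$ sits in cohomological degrees $1,2,3,\dots$. By definition, quasi-formality of $A^\bu$ means that the algebraic Eilenberg--Moore spectral sequence $E_r^{pq}(A^\bu)$ degenerates at the page $E_2$, so it suffices to show that all the higher differentials $d_r^{pq}$ with $r\ge2$ vanish.

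First I would invoke the identification of the second page with the Tor of the cohomology algebra,
$$
 E_2^{pq}=\Tor^{H^*(A^\subbu)}_{pq}(k,k),
$$
where $p$ is the homological and $q$ the internal (cohomological) grading. The hypothesis that $H^*(A^\bu)$ is Koszul in its cohomological grading says precisely that these groups vanish off the main diagonal, i.e.\ $E_2^{pq}=0$ whenever $p\ne q$; so the entire $E_2$ page is concentrated on the line $q=p$.

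Next I would analyze the shape of the higher differentials. Each $d_r^{pq}\:E_r^{pq}\rarrow E_r^{p-r,\,q-r+1}$ sends the diagonal coordinate $q-p$ to $(q-r+1)-(p-r)=(q-p)+1$; that is, every $d_r$ \emph{strictly increases} $q-p$ by one. Since for $r\ge2$ each term $E_r^{pq}$ is a subquotient of $E_2^{pq}$, it too is supported only on the diagonal $q=p$. Consequently, for any source $E_r^{pq}$ that could be nonzero (which forces $q=p$), the target $E_r^{p-r,\,q-r+1}$ lies on the line $q-p=1$ and therefore vanishes; and when the source is off-diagonal it is already zero. In either case $d_r^{pq}=0$ for all $r\ge2$, the spectral sequence degenerates at $E_2$, and $A^\bu$ is quasi-formal.

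I do not expect any genuine obstacle here: the content is entirely the bookkeeping of bidegrees. The single point deserving care is the verification that the higher differentials raise the diagonal coordinate $q-p$ rather than preserving it, since it is exactly this shift that renders the Koszul concentration on $q=p$ incompatible with a nonzero $d_r$. This is the ``dimension'' (bigrading) reason referred to just before the statement.
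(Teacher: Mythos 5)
Your proposal is correct and is exactly the paper's argument: the paper's proof reads ``this is a corollary of the definitions, as explained above,'' referring to the intrinsic quasi-formality paragraph, and your write-up simply makes the bigrading bookkeeping explicit --- $E_2$ is concentrated on the diagonal $q=p$ by Koszulity, while each $d_r$ with $r\ge2$ shifts $q-p$ by $+1$, so source or target always vanishes. The one point you flagged as needing care (that $d_r^{pq}\colon E_r^{pq}\to E_r^{p-r,\,q-r+1}$ raises $q-p$ by one) is verified correctly against the paper's stated convention for the differentials.
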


\begin{proof}
 This is a corollary of the definitions, as explained above.
\end{proof}

 For lack of a better term, let us call the Massey products discussed
above the \emph{tensor Massey products}.
 Our next aim is to compare these with a more elementary construction
that we call the \emph{tuple Massey products}.

 One reason for our interest in tuple Massey products and this
comparison comes from the application to the absolute Galois groups
and Galois cohomology.
 A conjecture of ours claims that the cohomology algebra $H^*(G_F,\Z/l)$
of the absolute Galois group $G_F$ of a field $F$ containing
a primitive $l$\+root of unity is Koszul~\cite{PV,Partin}.
 In the paper~\cite{Pnum}, this conjecture was proven for all
the (one-dimensional) local and global fields.
 On the other hand, there is a series of recent
papers~\cite{HW,MT1,EM,MT2} discussing and partially proving
the conjecture that tuple Massey products of degree-one elements
vanish in the cohomology algebra $H^*(G_F,\Z/l)$.
 The results above in this section and the discussion below show that
the Koszulity conjecture implies vanishing of the tensor Massey
products in $H^*(G_F,\Z/l)$, but may have no direct implications
concerning the problem of vanishing of the tuple Massey products.

\bigskip

 Let $A^\bu=(A^*\;d\:A^i\to A^{i+1})$ be a DG\+algebra over a field~$k$;
assume for simplicity that $A^i=0$ for $i<0$ and $A^0=k$ (so in
particular $d^0\:A^0\rarrow A^1$ is a zero map and the DG\+algebra
$A^\bu$ has a natural augmentation $A^\bu\rarrow k$).
 Let $B^i\subset Z^i\subset A^i$ denote the subspaces of coboundaries
and cocycles in $A^\bu$, so that $H^i=H^i(A^\bu)=Z^i/B^i$.
 The simplest possible construction of a $3$\+tuple Massey product of
degree-one elements in the cohomology algebra $H^*(A^\bu)$ proceeds
as follows.

 Let $x$, $y$, $z\in H^1(A^\bu)$ be three elements for which
$xy=0=yz$ in $H^2(A^\bu)$.
 Pick some preimages $\tilde x$, $\tilde y$, $\tilde z\in Z^1$ of
the elements $x$, $y$, $z\in H^1$.
 Then the products $\tilde x\tilde y$ and $\tilde y\tilde z$ are
coboundaries in $A^2$; so there exist elements $\zeta$ and $\xi\in A^1$
such that $d\zeta=\tilde x\tilde y$ and $d\xi=\tilde y\tilde z$ in~$A^2$.
 Hence one has
$$
 d(\tilde x\xi+\zeta\tilde z) = -\tilde xd(\xi)+d(\zeta)\tilde z
 = -\tilde x\tilde y\tilde z+\tilde x\tilde y\tilde z=0,
$$
so the element $\tilde x\xi+\zeta\tilde z$ is a cocycle in~$A^2$.
 By the definition, one sets the $3$\+tuple Massey product
$\langle x,y,z\rangle\in H^2(A^\bu)$ to be equal to the cohomology
class of the cocycle $\tilde x\xi+\zeta\tilde z\in Z^2$.

 We have made some arbitrary choices along the way, so it is important
to find out how does the output depend on these choices.
 Replacing the cochain~$\zeta$ by a different cochain $\zeta'$ with
the same differential $d\zeta'=\tilde x\tilde y\in A^2$ adds the product
of two cocycles $(\zeta'-\zeta)\tilde z\in Z^1\cdot\tilde z\subset Z^2$
to the cocycle $\tilde x\xi+\zeta\tilde z\in Z^2$.
 This means adding an element of the subspace $H^1\cdot z\subset H^2$
to our $3$\+tuple Massey product $\langle x,y,z\rangle\in H^2(A^\bu)$.

 Similarly, replacing the cochain~$\xi$ by a different cochain
$\xi'$ with the same differential $d\xi'=\tilde y\tilde z$ adds
the product of two cocycles $\tilde x(\xi'-\xi)\in\tilde x\cdot Z^1
\subset Z^2$ to the cocycle $\tilde x\xi+\zeta\tilde z\in Z^2$, which
means adding an element of the subspace $x\cdot H^1\subset H^2$ to
the $3$\+tuple Massey product $\langle x,y,z\rangle$.

 Furthermore, since we have assumed that $A^0=k$, the preimages
$\tilde x$, $\tilde y$, $\tilde z\in Z^1$ of given elements
$x$, $y$, $z\in H^1$ are uniquely defined.
 However, even without the $A^0=k$ assumption, one easily checks that
the choice of the preimages $\tilde x$, $\tilde y$, $\tilde z$
does not introduce any new indeterminacies into the output of our
$3$\+tuple Massey product construction as compared to the ones
we already described.

 To conclude, the tuple Massey product of three elements
$x$, $y$, $z\in H^1(A^\bu)$ with $xy=0=yz$ in $H^2(A^\bu)$ is well-defined
as an element of the quotient space
$$
 \langle x,y,z\rangle \in H^2(A^\bu)/(x\cdot H^1+ H^1\cdot z).
$$

\bigskip

 Now let us describe the connection with the tensor Massey products.
 Suppose that we want to extend the above construction to elements of
the tensor product space $H^1(A^\bu)^{\ot 3} =
H^1(A^\bu)\ot_k H^1(A^\bu)\ot_k H^1(A^\bu)$.
 With any three vectors $x$, $y$, $z\in H^1(A^\bu)$ one can
associate the decomposable tensor $x\ot y\ot z\in H^1(A^\bu)^{\ot 3}$;
however, not every tensor is decomposable.

 Let $m\:A^\bu\ot_k A^\bu\rarrow A^\bu$ denote the multiplication map in
the DG\+algebra $A^\bu$.
 We denote the induced (conventional) multiplication on the cohomology
algebra by $m_2\:H^*(A^\bu)\ot_k H^*(A^\bu)\rarrow H^*(A^\bu)$.
 Let $K^2\subset H^1(A^\bu)\ot_k H^1(A^\bu)$ denote the kernel of
the multiplication map $m_2\:H^1(A^\bu)\ot_k H^1(A^\bu)
\rarrow H^2(A^\bu)$.
 We would like to have our triple Massey product defined
on the subspace
$$
 K^2\ot_kH^1(A^\bu)\cap H^1(A^\bu)\ot_k K^2\.\subset\. H^1(A^\bu)^{\ot 3}.
$$

 The construction proceeds as follows.
 Given a tensor $\theta\in K^2\ot H^1\cap H^1\ot K^2\subset
H^1\ot H^1\ot H^1$, we lift it to a tensor $\widetilde\theta$
in $Z^1\ot Z^1\ot Z^1$ and apply the maps of multiplication of
the first two and the last two tensor factors $m^{(12)}=m\ot\id$ and
$m^{(23)}=\id\ot m$ to obtain a pair of elements 
$m^{(12)}(\widetilde\theta)\in B^2\ot Z^1$ and
$m^{(23)}(\widetilde\theta)\in Z^1\ot B^2$.
 Then we lift these two elements arbitrarily to elements in
$A^1\ot Z^1$ and $Z^1\ot A^1$, respectively, and finally apply
the product map~$m$ to each of them and add the results in order
to obtain an element in~$A^2$.
 By virtue of a computation similar to the above, this turns out
to be an element of~$Z^2$.
 Its image in $H^2(A^\bu)$, denoted by $m_3(\theta)$, is
the triple tensor Massey product of our tensor~$\theta$.

 What is the subspace in $H^2(A^\bu)$ up to which the element
$m_3(\theta)$ is defined?
 Let $W_l\subset H^1$ be the minimal vector subspace for which
$\theta\in W_l\ot H^1\ot H^1$, and let $W_r$ be the similar
minimal subspace for which $\theta\in H^1\ot H^1\ot W_r$
(hence in fact $\theta\in W_l\ot H^1\ot W_r$).
 If one is careful, one can make the Massey product $m_3(\theta)$
well-defined up to elements of $W_l\cdot H^1+H^1\cdot W_r\subset
H^2(A^\bu)$.
 However, generally speaking, for ``most'' tensors $\theta\in
K^2\ot H^1\cap H^1\ot K^2$ (and certainly for ``most'' tensors in
$H^1\ot H^1\ot H^1$) one would expect $W_l=H^1=W_r$.
 So the triple Massey product that we have constructed is most
simply viewed as a linear map
\begin{gather*}
 m_3\:K^2\ot_k H^1(A^\bu)\cap H^1(A^\bu)\ot_k K^2 \lrarrow
 H^2(A^\bu)/m_2(H^1(A^\bu){\ot_k}H^1(A^\bu)), \\
 K^2 = \ker(m_2\:H^1(A^\bu)\ot H^1(A^\bu)\rarrow H^2(A^\bu)).
\end{gather*}

 Notice that one has
$$
 K^2\ot_k H^1(A^\bu)\cap H^1(A^\bu)\ot_k K^2 \.\simeq\.
 \Tor^{H^*(A^\subbu)}_{3,3}(k,k) \.=\. E_2^{3,3}
$$
and
$$
 H^2(A^\bu)/m_2(H^1(A^\bu){\ot_k}H^1(A^\bu)) \.\simeq\.
 \Tor^{H^*(A^\subbu)}_{1,2}(k,k) \.=\. E_2^{1,2}
$$
in the Eilenberg--Moore spectral sequence.
 We have obtained an explicit construction of the differential
$$
 d_2^{3,3}\:E_2^{3,3}\lrarrow E_2^{1,2},
$$
which is the simplest example of a tensor Massey product in
the sense of our definition.

 How is this triple tensor Massey product construction related to
the $3$\+tuple Massey product defined above?
 On the one hand, a subspace $K^2\subset H^1\ot H^1$ may well contain
no nonzero decomposable tensors at all, while containing many
nontrivial indecomposable tensors.
 Then there may be also many nontrivial indecomposable tensors in
$K^2\ot H^1\cap H^1\ot K^2$.
 So the domain of definition of the tensor Massey product may be
essentially much wider than that of the tuple Massey product.
 On the other hand, the latter, more elementary construction may
produce its outputs with better precision, i.~e., modulo a smaller
subspace in $H^2(A^\bu)$.
 Thus the map~$m_3$ carries both more and less information about
the DG\+algebra $A^\bu$ than the operation $\langle x,y,z\rangle$
in the cohomology algebra $H^*(A^\bu)$.

\bigskip

 Similarly, let $x_1$, $x_2$, $x_3$, $x_4\in H^1(A^\bu)$ be four
elements for which $x_1x_2=x_2x_3=x_3x_4=0$ in $H^2(A^\bu)$.
 Since we have assumed that $A^0=k$, these elements have uniquely
defined preimages in $Z^1\simeq H^1$, which we will denote by
$\tilde x_1$, $\tilde x_2$, $\tilde x_3$, $\tilde x_4$.
 The products $\tilde x_1\tilde x_2$, \,$\tilde x_2\tilde x_3$, and
$\tilde x_3\tilde x_4$ are coboundaries in $A^2$, so there exist three
elements $\eta_{12}$, $\eta_{23}$, $\eta_{34}\in A^1$ such that
$d\eta_{rs} = \tilde x_r\tilde x_s$ for all $1\le r<s\le4$, \,$s-r=1$.
 The elements
$$
 \tilde x_1\eta_{23}+\eta_{12}\tilde x_3 \quad\text{and}\quad
 \tilde x_2\eta_{34}+\eta_{23}\tilde x_4 \in Z^2
$$
represent the $3$\+tuple Massey products $\langle x_1,x_2,x_3\rangle$
and $\langle x_2,x_3,x_4\rangle$.
 Suppose that these two cocycles are coboundaries, i.~e., there
exist two elements $\zeta_{123}$ and $\zeta_{234}\in A^1$ such that
$d\zeta_{rst}=\tilde x_r\eta_{st}+\eta_{rs}\tilde x_t$ for
$1\le r<s<t\le4$, \ $t-s=s-r=1$.
 One has
\begin{multline*}
 d(\tilde x_1\zeta_{234}+\eta_{12}\eta_{34}+\zeta_{123}\tilde x_4) \\
= -\tilde x_1\tilde x_2\eta_{34}-\tilde x_1\eta_{23}\tilde x_4 +
\tilde x_1\tilde x_2\eta_{34}-\eta_{12}\tilde x_3\tilde x_4 +
\tilde x_1\eta_{23}\tilde x_4 + \eta_{12}\tilde x_3\tilde x_4 = 0,
\end{multline*}
so the element $\tilde x_1\zeta_{234}+\eta_{12}\eta_{34}+
\zeta_{123}\tilde x_4$ is a cocycle in~$A^2$.
 By the definition, one sets the $4$\+tuple Massey product
$\langle x_1,x_2,x_3,x_4\rangle\in H^2(A^\bu)$ to be equal to
the cohomology class of this cocycle.

 Let us briefly describe the tensor version of the quadruple
Massey product.
 Let $K^3\subset K^2\ot H^1\cap H^1\ot K^2\subset H^1(A^\bu)^{\ot 3}$
denote the kernel of the above map~$m_3$.
 Consider the intersection of two vector subspaces
$K^3\ot H^1\cap H^1\ot K^3$ inside $H^1(A^\bu)^{\ot 4}$.
 Then the desired map is
$$
 m_4\:K^3\ot H^1(A^\bu)\cap H^1(A^\bu)\ot K^3\lrarrow
 (H^2(A^\bu)/\im m_2)/\im m_3.
$$ 
 Its explicit construction is based on the same formulas as the above
construction of the 4\+tuple Massey product.
 This is the differential
$$
 d_3^{4,4}\:E_3^{4,4}\lrarrow E_3^{1,2}
$$
in the Eilenberg--Moore spectral sequence.

 The $n$\+ary tensor Massey product of degree-one elements is
a partially defined multivalued linear map
$$
 m_n\:H^1(A^\bu)\ot H^1(A^\bu)\ot\dotsb\ot H^1(A^\bu)
 \ldarrow H^2(A^\bu)
$$
that can be identified (up to a possible plus/minus sign) with
the differential
$$
 d_{n-1}^{n,n}\:E_{n-1}^{n,n}\lrarrow E_{n-1}^{1,2}.
$$
 As in the case of triple Massey products, the constructions of
the $n$\+tuple and $n$\+ary tensor Massey products agree where
the former is defined up to elements of the subspace up to which
the latter is defined.
 However, the domain of definition of the tensor Massey product may be
wider than that of the tuple Massey product, while the tuple Massey
product may produce its outputs with better precision.

\bigskip

 Now let us consider the case when the cohomology algebra $H^*(A^\bu)$
is generated by $H^1$ (as an associative algebra with the conventional
multiplication~$m_2$).
 Then, the map $m_2\:H^1(A^\bu)\ot H^1(A^\bu)\rarrow H^2(A^\bu)$ being
surjective, the above tensor Massey product maps~$m_3$, $m_4$,~\dots\ 
vanish automatically (as their target spaces are zero).
 So do the similar Massey products
$$
 d_{p-1}^{p,\,j_1+\dotsb+j_p}\:
 H^{j_1}(A^\bu)\ot\dotsb\ot H^{j_p}(A^\bu)
 \ldarrow H^{j_1+\dotsb+j_p-p+2}(A^\bu),
$$
$j_1$,~\dots, $j_p\ge1$, \,$p\ge3$, in the higher cohomology.

 Does it mean that all the differentials $d_r^{pq}$ in
the Eilenberg--Moore spectral sequence vanish for $r\ge2$?
 Not necessarily.
 The first possibly nontrivial example would be
$$
 d_2^{4,4}\:H^1(A^\bu)^{\ot 4} \ldarrow
 H^1(A^\bu)\ot H^2(A^\bu)\.\oplus\. H^2(A^\bu)\ot H^1(A^\bu).
$$
 This is the map whose source space is actually the kernel
$\Tor^{H^*(A^\subbu)}_{4,4}(k,k)$ of the differential $d_1^{4,4}\:
H^1(A^\bu)^{\ot 4}\rarrow (H^*(A^\bu)^{\ot 3})^4$, that is, the subspace
$$
 K^2\ot H^1\ot H^1\.\cap\. H^1\ot K^2\ot H^1\.\cap\. H^1\ot H^1\ot K^2
 \.\subset\. H^1(A^\bu)^{\ot 4}
$$
and whose target space is the middle homology space
$\Tor^{H^*(A^\subbu)}_{2,3}(k,k)$ of the sequence
$$
 H^1\ot H^1\ot H^1\lrarrow H^2\ot H^1\.\oplus\.H^1\ot H^2\lrarrow H^3
$$
formed by the differentials $d_1^{3,3}$ and~$d_1^{2,3}$.
 The latter vector space is otherwise known as the space of
relations of degree~$3$ in the graded algebra~$H^*(A^\bu)$.

 What does the map~$d_2^{4,4}$ do?
 Its source space can be otherwise described as the intersection
$$
 (K^2\ot H^1\cap H^1\ot K^2)\ot H^1 \,\cap\,
 H^1\ot (K^2\ot H^1\cap H^1\ot K^2).
$$
 The map $(m_3\ot\id\;\id\ot m_3)$ acts from this subspace to
the quotient space of the vector space $H^2\ot H^1\.\oplus\.H^1\ot H^2$
by the image of the map $(m_2\ot\id\;\id\ot m_2)$ coming from
the direct sum of two copies of $H^1\ot H^1\ot H^1$.
 It is claimed that the map $(m_3\ot\id\;\id\ot m_3)$ can be
naturally lifted to the quotient space of 
$H^2\ot H^1\.\oplus\.H^1\ot H^2$ by the image of only one (diagonal)
copy of $H^1\ot H^1\ot H^1$, as one can see from the explicit
construction of~$m_3$.

 Indeed, let us restrict ourselves to decomposable tensors now
(for simplicity of notation).
 Let $x_1$, $x_2$, $x_3$, $x_4$ be four elements in $H^1(A^\bu)$ for
which $x_1x_2=x_2x_3=x_3x_4=0$ in $H^2(A^\bu)$, and let $\tilde x_1$,
$\tilde x_2$, $\tilde x_3$, $\tilde x_4$ be the liftings of these
elements to $Z^1\simeq H^1$.
 Let $\eta_{12}$, $\eta_{23}$, $\eta_{34}$ be three elements in $A^1$
such that $d\eta_{rs} = \tilde x_r\tilde x_s$ in $B^2\subset A^2$
for all $1\le r<s\le4$, \,$s-r=1$.
 Then the triple Massey products are $\langle x_1,x_2,x_3\rangle =
(\tilde x_1\eta_{23}+\eta_{12}\tilde x_3 \bmod B^2)$ and
$\langle x_2,x_3,x_4\rangle = (\tilde x_2\eta_{34}+\eta_{23}\tilde x_4
\bmod B^2)\in H^2$.
 Replacing $\eta_{rs}$ with $\eta_{rs}'=\eta_{rs}+\tilde y_{rs}$ with
$\tilde y_{rs}\in Z^1$ for all $1\le r<s\le4$, \,$s-r=1$, one obtains
$\langle x_1,x_2,x_3\rangle' = \tilde x_1\eta_{23}'+\eta_{12}'\tilde x_3
\bmod B^2 = \langle x_1,x_2,x_3\rangle + x_1y_{23} + y_{12}x_3$ and
$\langle x_2,x_3,x_4\rangle' = \tilde x_2\eta_{34}'+\eta_{23}'\tilde x_4
\bmod B^2 = \langle x_2,x_3,x_4\rangle + x_2y_{34} + y_{23}x_4$, where
$y_{rs}\in H^1$ are the cohomology classes of the elements
$\tilde y_{rs}\in Z^1$. 
 Finally, one has
\begin{align*}
 (\langle x_1,x_2,x_3\rangle'\ot x_4\;x_1\ot\langle x_2,x_3,x_4\rangle')
 &=(\langle x_1,x_2,x_3\rangle\ot x_4\;x_1\ot\langle x_2,x_3,x_4\rangle)
 \\ &+((x_1y_{23}+y_{12}x_3)\ot x_4\;x_1\ot(x_2y_{34}+y_{23}x_4))
\end{align*}
and
\begin{multline*}
 ((x_1y_{23}+y_{12}x_3)\ot x_4\;x_1\ot(x_2y_{34}+y_{23}x_4)) \\
 = d_1^{3,3}
 (x_1\ot y_{23}\ot x_4 + y_{12}\ot x_3\ot x_4 + x_1\ot x_2\ot y_{34})
\end{multline*}
in $H^2\ot H^1\.\oplus\.H^1\ot H^2$, because
$x_3x_4=x_1x_2=0$ in $H^2(A^\bu)$ by assumption.

 What if the cohomology algebra $H^*(A^\bu)$ is not only generated
by $H^1$, but also defined by quadratic relations?
 There still can be nontrivial tensor Massey operations (i.~e.,
the differentials $d_r^{pq}$ with $r\ge2$), starting from
$$
 d_2^{5,5}\:H^1(A^\bu)^{\ot 5} \ldarrow
 H^2\ot H^1\ot H^1\.\oplus\. H^1\ot H^2\ot H^1\.\oplus\.
 H^1\ot H^1\ot H^2.
$$
 This is actually well-defined as a linear map from the source space
$$\textstyle
 \Tor^{H^*(A^\subbu)}_{5,5}(k,k)\.\simeq\.
 \bigcap_{i=1}^4 H^1(A^\bu)^{\ot i-1}\ot K^2\ot H^1(A^\bu)^{\ot 4-i}
 \.\subset\. H^1(A^\bu)^{\ot 5}
$$
to a target space isomorphic to $\Tor^{H^*(A^\subbu)}_{3,4}(k,k)$.
 The latter Tor space is the first obstruction to Koszulity
of a quadratic graded algebra $H^*(A^\bu)$.

\Section{Noncommutative (Rational) Homotopy Theory}
\label{noncommutative-homotopy-secn}

 From an algebraist's point of view, \emph{rational homotopy theory}
is an equivalence between categories of commutative and Lie
DG\+(co)algebras satisfying appropriate boundedness conditions and
viewed up to quasi-isomorphism.
 The classical formulation~\cite{Quil} claims an equivalence between
the categories of negatively cohomologically graded Lie DG\+algebras
and augmented cocommutative DG\+coalgebras with the augmentation
ideals concentrated in the cohomological degrees~$\le -2$.
 The localizations of such two categories of DG\+(co)algebras by
the classes of (co)multiplicative quasi-isomorphisms are equivalent
over any field of characteristic~$0$; over the field of rational
numbers, these are also identified with the localization of
the category of connected, simply connected topological spaces by
the class of rational equivalences.

 Attempting to include a nontrivial fundamental group into
the picture, people usually consider nilpotent topological spaces,
nilpotent groups, and Malcev completions.
 Here a discrete group is called nilpotent if its lower central
series converges to zero in a finite number of steps.
 Yet the most natural setup for the nilpotency condition is that of
coalgebras rather than algebras, as it allows for infinitary, or
``ind''-conilpotency~\cite{PV,Hin2}.
 Thus it appears that the maximal natural generality for
``an algebraist's version of rational homotopy theory'' is that
of an equivalence between the categories of nonnegatively
cohomologically graded conilpotent Lie DG\+coalgebras and
positively cohomologically graded commutative DG\+algebras,
considered up to quasi-isomorphism over a field
of characteristic~$0$.

 Here the commutative DG\+algebra computes the cohomology algebra
of the would-be topological space, while the conilpotent Lie
DG\+coalgebra is, roughly speaking, dual to the derived rational
completion of its homotopy groups with their Whitehead bracket
(notice the passage to the dual coalgebra in the completion
construction of
Theorem~\ref{pronilpotent-completion-coalgebra-cohomology}
and~\cite[Remark~5.6]{Pbogom}).
 A Lie DG\+coalgebra is called conilpotent if its underlying graded
Lie (super)coalgebra is conilpotent; a nonnegatively graded Lie
supercoalgebra is conilpotent if it is a union of dual coalgebras to
finite-dimensional nilpotent nonpositively graded Lie superalgebras;
and a finite-dimensional nonpositively graded Lie superalgebra is
nilpotent if its degree-zero component is nilpotent Lie algebra and
its action in the components of other degrees is nilpotent.
 (See~\cite[Section~8]{Pbogom} and~\cite[Section~D.6.1]{Psemi} for
a discussion of conilpotent Lie coalgebras and their conilpotent
coenveloping coalgebras.)

 In this section, we make yet another algebraic
generalization/simplification and replace the pair of dual operads
$\mathcal{C}om$-$\mathcal{L}ie$ with that of
$\mathcal{A}ss$-$\mathcal{A}ss$.
 In other words, we consider a noncommutative version of
the above-described theory with commutative DG\+algebras replaced
by associative ones and conilpotent Lie DG\+coalgebras replaced
by conilpotent coassociative ones.
 In this setting, the characteristic~$0$ restriction becomes
unnecessary and one can work over an arbitrary ground field~$k$.
 Thus our aim is to construct an equivalence between the categories
of nonnegatively cohomologically graded conilpotent (coassociative)
DG\+coalgebras and positively cohomologically graded
(associative) DG\+algebras.

\bigskip

 Let $C^\bu$ be a nonzero DG\+coalgebra over a field~$k$; suppose that
it is endowed with a coaugmentation (DG\+coalgebra morphism)
$k\rarrow C^\bu$ and denote the quotient DG\+coalgebra (without counit)
by $C^\bu_+=C^\bu/k$.
 By the definition, the \emph{cobar construction} $\Cob^\bu(C^\bu)$ of
a coaugmented DG\+coalgebra $C^\bu$ is the free associative algebra
$$\textstyle
 \Cob(C)=\bigoplus_{n=0}^\infty C_+[-1]^{\ot n}
$$
generated by the graded vector space $C_+[-1]$ obtained by shifting
by~$-1$ the cohomological grading of the coaugmentation cokernel~$C_+$.
 The grading on $\Cob(C)$ is induced by the grading of~$C_+[-1]$.
 Alternatively, one can define the cobar construction $\Cob(C)$ as
the direct sum of the tensor powers $C_+^{\ot n}$ of the vector space
$C_+$ and endow it with the total grading equal to the sum $i+n$ of
the grading~$i$ induced by the grading of $C_+$ and the grading~$n$
by the number of tensor factors.

 The differential on $\Cob^\bu(C^\bu)$ is the sum of two summands
$d+\d$, the former of them induced by the differential on $C^\bu_+$
and the latter one by the comultiplication in~$C_+^\bu$.
 One has to work out the plus/minus signs in order to make the total
differential on $\Cob^\bu(C^\bu)$ square to zero (cf.\ the definition
of the bar construction in Section~\ref{koszul-implies-quasiform}).

 A quasi-isomorphism of augmented DG\+algebras $A^\bu\rarrow B^\bu$
induces a quasi-isomorphism of their bar constructions
$\Br^\bu(A^\bu)\rarrow\Br^\bu(B^\bu)$ (as one can see from
the filtration and the Eilenberg--Moore spectral sequence
discussed in Section~\ref{koszul-implies-quasiform}).
 However, the morphism of cobar constructions $\Cob^\bu(C^\bu)\rarrow
\Cob^\bu(D^\bu)$ induced by a quasi-isomorphism of coaugmented (even
conilpotent) DG\+coalgebras $C^\bu\rarrow D^\bu$ may not be
a quasi-isomorphism (see Remark~\ref{cobar-construction-counterex}
at the end of this section).
 The reason is that the filtration of the bar construction by the number
of tensor factors is an increasing one, while the similar
filtration of the cobar construction is a decreasing one
(cf.\ the discussion of two kinds of differential Cotor functors
in~\cite{EM1}, \cite{HMS}, and~\cite[Section~0.2.10]{Psemi}).

 As we are interested in the cobar construction as defined above
(i.~e., the direct sum of the tensor powers of $C^\bu_+[-1]$) rather
than its completion by this filtration (which would mean the direct
product of such tensor powers), the related spectral sequence can be
viewed as converging to the cohomology of the cobar construction
only when it is in some sense locally finite.
 This includes two separate cases considered below, which roughly
correspond to the ``conilpotent'' and ``simply connected'' versions
of noncommutative homotopy theory as discussed above.

 The conilpotent version of the theory, which is of primary interest
to us, is based on the following assertion (which does not yet presume
conilpotency, but it will be needed further on).

\begin{prop} \label{nonnegative-dg-coalgebras-cobar-construction}
 Let $C^\bu=(C^0\to C^1\to C^2\to\dotsb)$ and $D^\bu=(D^0\to D^1\to D^2
\to\dotsb)$ be two nonnegatively cohomologically graded coaugmented
DG\+coalgebras.
 Then any comultiplicative quasi-isomorphism $f\:C^\bu\rarrow D^\bu$,
i.~e., a morphism of DG\+coalgebras inducing an isomorphism
$H^*(C^\bu)\simeq H^*(D^\bu)$ of their cohomology coalgebras, induces
a quasi-isomorphism of the cobar constructions
$\Cob^\bu(f)\:\Cob^\bu(C^\bu)\rarrow\Cob^\bu(D^\bu)$.
\end{prop}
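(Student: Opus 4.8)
The plan is to compute the cohomology of both cobar constructions by means of the decreasing filtration by the number of tensor factors — precisely the filtration whose defective behaviour was flagged above — and to exploit the fact that the nonnegativity of the cohomological grading renders this filtration finite in each total degree, so that the associated spectral sequence does converge after all.

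First I would equip $\Cob^\bu(C^\bu)=\bigoplus_{n\ge0}C_+[-1]^{\ot n}$ with the decreasing filtration
$$
 G^p\Cob^\bu(C^\bu)=\bigoplus_{n\ge p}C_+[-1]^{\ot n},
$$
which is preserved by the total differential $d+\d$: the internal part~$d$ keeps the number of tensor factors fixed, while the comultiplication part~$\d$ raises it by one. On the associated graded quotient $G^p/G^{p+1}=C_+[-1]^{\ot p}$ the induced differential is just~$d$, so the Künneth formula over the field~$k$ yields a natural isomorphism
$$
 E_1^p\simeq\bigl(H^*(C_+^\bu)[-1]\bigr)^{\ot p},
$$
functorial in~$C^\bu$. (The differential $d_1$ induced by~$\d$ turns the $E_1$ page into the cobar complex of the graded coalgebra $H^*(C^\bu)$, but I will not need to identify it.)

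The main point — and the only real obstacle — is convergence, and this is exactly where the hypothesis enters. Since $C^\bu$ is nonnegatively graded, every homogeneous element of $C_+$ has internal degree $\ge0$ and hence, with the degree conventions recalled above, contributes at least~$1$ to the total grading of a tensor monomial in $\Cob(C)$; consequently a monomial of total degree~$N$ involves at most $N$ tensor factors. Therefore $G^{N+1}\Cob^\bu(C^\bu)$ vanishes in total degree~$N$, so in each fixed total degree the filtration $G^\bu$ has only finitely many steps. The spectral sequence is thus locally finite and converges to $H^*\Cob^\bu(C^\bu)$; the same holds for $D^\bu$.

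Finally, the morphism $f\:C^\bu\rarrow D^\bu$ respects the number of tensor factors, hence induces a morphism of filtered complexes $\Cob^\bu(f)$ and a morphism between the two convergent spectral sequences. Comparing the long exact cohomology sequences of $0\rarrow k\rarrow C^\bu\rarrow C_+^\bu\rarrow0$ and of its analogue for~$D^\bu$ via the five lemma, the assumed isomorphism $H^*(C^\bu)\simeq H^*(D^\bu)$ gives an isomorphism $H^*(C_+^\bu)\simeq H^*(D_+^\bu)$; by the functoriality of the displayed Künneth isomorphism, $\Cob^\bu(f)$ then induces an isomorphism on the $E_1$ page, and hence on every page $E_r$. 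Comparison of convergent spectral sequences forces $\Cob^\bu(f)$ to be a quasi-isomorphism. Note that conilpotency of $C^\bu$ and $D^\bu$ is nowhere used, in agreement with the remark preceding the statement; the whole weight of the argument rests on the finiteness of $G^\bu$ in each total degree, without which the decreasing filtration would fail to be locally finite and the spectral sequence could not be used to compute the cohomology of the cobar construction.
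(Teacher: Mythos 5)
Your argument is correct and coincides with the paper's own proof: the same decreasing filtration $G^p\Cob^\bu=\bigoplus_{n\ge p}C_+[-1]^{\ot n}$, the same K\"unneth identification of the associated graded (so that a quasi-isomorphism of DG\+coalgebras induces an isomorphism on $E_1$), and the same key observation that $G^{n+1}\Cob^n=0$ because $C_+[-1]$ sits in degrees $\ge1$, which makes the filtration finite in each total degree and the spectral-sequence comparison legitimate. Your closing remark that conilpotency is not used is also consistent with the paper, which states the proposition without that hypothesis.
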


\begin{proof}
 For any coaugmented DG\+coalgebra $E^\bu$, set $G^p\Cob^\bu(E^\bu)=
\bigoplus_{n=p}^\infty E_+[-1]^{\ot n}$.
 This is a decreasing filtration of the DG\+algebra $\Cob^\bu(E^\bu)$
compatible with the multiplication and the differential.
 Clearly, a quasi-isomorphism of coaugmented DG\+coalgebras
$C^\bu\rarrow D^\bu$ induces a quasi-isomorphism of the associated
graded algebras $\gr_G\Cob^\bu(C^\bu)\rarrow\gr_G\Cob^\bu(D^\bu)$.
 It remains to observe that when the DG\+coalgebras $C^\bu$ and
$D^\bu$ are nonnegatively cohomologically graded, the filtrations
$G^\bu$ on $\Cob^\bu(C^\bu)$ and $\Cob^\bu(D^\bu)$ are finite at
every (total) cohomological degree.
 Indeed, one has $G^{n+1}\Cob^n(C^\bu)=0=G^{n+1}\Cob^n(D^\bu)$ for
every integer~$n$.
\end{proof}

 A coaugmented DG\+coalgebra $C^\bu$ is called \emph{conilpotent} if
its underlying coaugmented (graded) coalgebra is conilpotent
(see Section~\ref{koszul-implies-kpi1} for the definition).
 The cobar construction $\Cob^\bu(C^\bu)$ of a coaugmented DG\+coalgebra
$C^\bu$ is naturally an augmented DG\+algebra; and
the bar construction $\Br^\bu(A^\bu)$ of an augmented DG\+algebra
$A^\bu$ is a conilpotent DG\+coalgebra.

 The two constructions $C^\bu\longmapsto\Cob^\bu(C^\bu)$ and
$A^\bu\longmapsto\Br^\bu(A^\bu)$, viewed as functors between
the categories of augmented DG\+algebras and conilpotent
DG\+coalgebras, are adjoint functors: for any conilpotent
DG\+coalgebra $C^\bu$ and augmented DG\+algebra $A^\bu$, there is
a bijective correspondence between morphisms of augmented DG\+algebras
$\Cob^\bu(C^\bu)\rarrow A^\bu$ and morphisms of coaugmented
DG\+coalgebras $C^\bu\rarrow\Br^\bu(A^\bu)$.
 The proofs of these results, as well as of the following ones,
can be found, e.~g., in~\cite[Section~6.10]{Pkoszul} (see
also~\cite[Sections~II.3\+4]{HMS} and~\cite[Section~3]{Hin2}).

\begin{prop} \label{bar-cobar-adjunction}
\textup{(a)} For any augmented DG\+algebra $A^\bu$, the adjunction
morphism $\Cob^\bu(\Br^\bu(A^\bu))\rarrow A^\bu$ is a quasi-isomorphism
of augmented DG\+algebras. \par
\textup{(b)} For any conilpotent DG\+coalgebra $C^\bu$,
the adjunction morphism $C^\bu\rarrow\Br^\bu(\Cob^\bu(C^\bu))$ is
a quasi-isomorphism of (conilpotent) DG\+coalgebras.
\end{prop}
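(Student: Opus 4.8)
The plan is to deduce both adjunction quasi-isomorphisms from the two \emph{fundamental acyclicities} of the subject — the cobar construction of a cofree conilpotent DG\+coalgebra, and the bar construction of a free DG\+algebra, are each contractible onto their space of (co)generators — combined with the Homological Perturbation Lemma. In each case the underlying graded object of the composite is a (co)bar construction of a (co)free object, and the total differential splits, with respect to the grading by the total number of tensor factors drawn from $A_+$ (respectively $C_+$), which I will call the \emph{weight}, into a weight-preserving part $D_0$ and a part $\delta$ that changes weight by exactly one. The perturbation machinery then upgrades a contraction for $D_0$ (supplied by the fundamental acyclicity) to a contraction for the full differential $D=D_0+\delta$, and one identifies the resulting contraction with the adjunction morphism.

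For part~(a), set $W=A_+[1]$, so that $\Br^\bu(A^\bu)=\bigoplus_n W^{\ot n}$ is the cofree conilpotent coalgebra on $W$ and $\Cob^\bu\Br^\bu(A^\bu)$ is the free algebra on $\Br^\bu(A^\bu)_+[-1]$. The total differential is the sum of the internal differential induced by $d_A$ (weight-preserving), the cobar part induced by the deconcatenation comultiplication of $\Br^\bu(A^\bu)$ (weight-preserving), and the bar part induced by the multiplication of $A$, which strictly \emph{lowers} weight by one. Writing $D=D_0+\delta$ with $\delta$ the bar part, the complex $(\Cob^\bu\Br^\bu(A^\bu),D_0)$ is the cobar construction of the cofree conilpotent DG\+coalgebra $(W^{\ot\bu},d)$, which deformation-retracts onto $(A^\bu,d_A)$ placed in weight one; let $(p,i,h)$ be this contraction. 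Since $\delta$ lowers weight by one while $h$ does not lower it, and weights are bounded below, $h\delta$ is locally nilpotent, so the Homological Perturbation Lemma applies and produces a contraction of the full $(\Cob^\bu\Br^\bu(A^\bu),D)$ onto a perturbed differential on $A^\bu$. That perturbed differential is $d_A$ itself, because a single weight-one generator admits no adjacent pair to multiply, so the transferred perturbation vanishes. One then checks that the perturbed inclusion $i$ is a quasi-isomorphism and that the counit $\varepsilon\:\Cob^\bu\Br^\bu(A^\bu)\rarrow A^\bu$ satisfies $\varepsilon\circ i=\id_{A^\bu}$; by two-out-of-three, $\varepsilon$ is a quasi-isomorphism.

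Part~(b) is dual, with conilpotency entering in exactly one place. Put $V=C_+[-1]$, so $\Cob^\bu(C^\bu)=\bigoplus_m V^{\ot m}$ is the free algebra on $V$ and $\Br^\bu\Cob^\bu(C^\bu)$ is the cofree conilpotent coalgebra on $\Cob^\bu(C^\bu)_+[1]$. Grading again by the number of $C_+$-factors, the weight-preserving part $D_0$ now comprises the internal differential and the concatenation multiplication of $\Cob^\bu(C^\bu)$, while the perturbation $\delta$, induced by the comultiplication of $C$, strictly \emph{raises} weight by one. Here $(\Br^\bu\Cob^\bu(C^\bu),D_0)$ is the bar construction of the free DG\+algebra $(V^{\ot\bu},d)$, which deformation-retracts onto $(C^\bu,d_C)$ by the fundamental acyclicity. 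The local nilpotence of $h\delta$ required by the Perturbation Lemma is supplied \emph{precisely} by the conilpotency of $C$, since on any fixed element only finitely many iterated reduced comultiplications are nonzero. As before the transferred perturbation on the weight-one target vanishes, and the unit $C^\bu\rarrow\Br^\bu\Cob^\bu(C^\bu)$ is identified with the resulting deformation-retract inclusion, hence is a quasi-isomorphism.

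The routine ingredients are the sign conventions in the (co)bar differentials and the explicit contracting homotopies for the (co)bar of a (co)free object, which are the standard ``extra degeneracy'' operators. The one genuinely delicate point — and the place where the two parts diverge — is the side condition legitimizing the Homological Perturbation Lemma, namely local nilpotence of $h\delta$, equivalently termwise convergence of the perturbed homotopy series. For~(a) this is automatic because the weight is bounded below and the multiplication strictly decreases it; for~(b) it is exactly the conilpotency hypothesis, \emph{without} which the comparison genuinely fails, consistent with the remark preceding the proposition that a quasi-isomorphism of coalgebras need not induce one on cobar constructions. I would therefore spend the most care on pinning down these nilpotence conditions and on verifying that the maps produced by perturbation theory really coincide with the adjunction unit and counit (so that the two-out-of-three argument applies); everything else is bookkeeping.
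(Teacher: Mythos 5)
Your plan is correct, and it proves the right statement, but it runs on different machinery than the paper. The paper filters rather than perturbs: for part~(a) it puts the two-step filtration $F_0A^\bu=k\subset F_1A^\bu=A^\bu$ on $A^\bu$, which induces exactly your weight filtration on $\Cob^\bu(\Br^\bu(A^\bu))$; passing to the associated graded kills the multiplication (your $\delta$), and the weight-$n$ piece becomes the total complex of $2^{n-1}$ copies of $A_+^{\ot n}$, acyclic for $n\ge2$ --- which is precisely your ``fundamental acyclicity'' of the cobar of a cofree coalgebra. For part~(b) the paper uses two nested filtrations: first the coaugmentation filtration $N$ (your conilpotency input, packaged as exhaustiveness of an increasing filtration), then a two-step filtration $G$ killing the comultiplication (your $\delta$), with local finiteness of $G$ in each $N$-degree playing the role of your convergence check. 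So the decomposition of the differential is identical in the two treatments; what differs is that you run the Homological Perturbation Lemma where the paper runs filtered-quasi-isomorphism arguments. The HPL buys explicit contracting homotopies and isolates cleanly where convergence is needed --- your diagnosis that local nilpotence of $h\delta$ in~(b) is exactly conilpotency, and is automatic in~(a), is correct and consistent with the paper's warning that~(b) fails without conilpotency --- while the filtration route avoids verifying side conditions and identifying the transferred maps with the adjunction (co)unit. On that last point, one caveat for~(b): the unit $\eta\colon C^\bu\rarrow\Br^\bu(\Cob^\bu(C^\bu))$, being a coalgebra morphism into a cofree conilpotent coalgebra, has nonzero components in \emph{all} weights, so it is not the naive weight-one inclusion $i$; either verify that the perturbed inclusion $\sum_k(h\delta)^k i$ equals~$\eta$, or --- more easily --- observe that the perturbed projection equals $p$ (since $p(\delta h)^k=0$ for $k\ge1$) and that $p\circ\eta=\id_{C^\bu}$, and conclude by the same two-out-of-three argument you use in~(a).
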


 Warning: the assertion of part~(b) does \emph{not} hold without
the conilpotency assumption on~$C^\bu$, and in fact, the adjunction
morphism does not even \emph{exist} without this assumption (cf.\
Remark~\ref{derived-completion} below).

\begin{proof}
 Part~(a): define an increasing filtration on a DG\+algebra $A^\bu$
by the rules $F_0A^\bu=k$ and $F_nA^\bu=A^\bu$ for $n\ge 1$.
 This filtration is compatible with the differential and
the multiplication on $A^\bu$, and so it induces a filtration $F$
compatible with the differential and the comultiplication on
$\Br^\bu(A^\bu)$, and further, a filtration $F$ compatible with
the differential and the multiplication on
$\Cob^\bu(\Br^\bu(A^\bu))$.
 In fact, the filtration on the (co)bar construction induced by
a filtration on a (co)algebra is always compatible with
the tensor (co)multiplication, while compatibility of the original
filtration with the differential and the (co)multiplication ensures
compatibility of the induced filtration with the (co)bar differential.
 The adjunction morphism $\Cob^\bu(\Br^\bu(A^\bu))
\rarrow A^\bu$ is also compatible with the filtrations.

 The passage to the associated graded DG\+(co)algebras with respect
to the filtration $F$ gets rid of all the information about
the multiplication in $A^\bu$; so the DG\+algebra $\gr^FA^\bu$ is
obtained from the DG\+algebra $A^\bu$ by setting the multiplication
on $A_+^\bu$ to be zero, and the DG\+algebra
$\gr^F\Cob^\bu(\Br^\bu(A^\bu))$ is the cobar-construction of
the bar-construction of the DG\+algebra $\gr^FA^\bu$.
 From this point one can proceed further and rid oneself also of
the differential on $A^\bu$ in addition to the multiplication; but
this is unnecessary.
 It suffices to notice that the component of degree~$n$ of
the complex $\gr^F\Cob^\bu(\Br^\bu(A^\bu))$ with respect to
the grading by the indices of the filtration $F$ is the total
complex of a bicomplex composed of $2^{n-1}$ copies of
the complex $A_+^{\ot n}$.
 Proving that such complexes are acyclic for $n\ge 2$ is
elementary combinatorics.

 Part~(b): the argument dual to the one in part~(b) is not immediately
applicable, as the related filtration $G$ on the DG\+coalgebra $C^\bu$
and the induced filtrations on its cobar and bar constructions would
be decreasing ones.
 Instead, consider the canonical increasing filtration
$N_mC^\bu=\ker(C^\bu\to C_+^{\ot m+1})$ on the conilpotent
DG\+coalgebra~$C^\bu$.
 The associated graded DG\+coalgebra $\gr^N\Br^\bu(\Cob^\bu(C^\bu))$ of
the DG\+coalgebra $\Br^\bu(\Cob^\bu(C^\bu))$ by the increasing
filtration induced by the filtration $N$ on $C^\bu$ is identified
with the DG\+coalgebra $\Br^\bu(\Cob^\bu(\gr^NC^\bu))$.
 This reduces the question to the case of the DG\+coalgebra
$\gr^NC^\bu$, which is endowed with an additional positive grading
by the indices of the filtration~$N$.
 Now one endows the DG\+coalgebra $\gr^NC^\bu$ with the decreasing
filtration $G$ with $G^0\gr^NC^\bu=\gr^NC^\bu$, the component
$G^1\gr^NC^\bu$ being the kernel of the counit map $\gr^NC^\bu\rarrow
k$, and $G^n\gr^NC^\bu=0$ for $n\ge2$.
 The induced decreasing filtration $G$ on the DG\+coalgebra
$\Br^\bu(\Cob^\bu(\gr^NC^\bu))$ is locally finite in the grading
by the indices~$m$ of the filtration~$N$.
 This reduces the question to proving that the morphism of
DG\+coalgebras $\gr_G\gr^NC^\bu\rarrow
\Br^\bu(\Cob^\bu(\gr_G\gr^NC^\bu))$ is a quasi-isomorphism, which
can be done by a combinatorial argument similar to the one in part~(a).
\end{proof}

  We recall that an augmented DG\+algebra $A^\bu$ is called
\emph{positively cohomologically graded} if its augmentation ideal
is concentrated in the positive cohomological degrees,
that is $A_+^i=0$ for all $i\le0$.
 Equivalently, a DG\+algebra $A^\bu$ is positively cohomologically
graded if $A^i=0$ for all $i<0$ and $A^0$ is a one-dimensional
vector space generated by the unit element of $A$; any such
DG\+algebra $A^\bu$ has a unique augmentation (DG\+algebra morphism)
$A^\bu\rarrow k$.

\begin{thm} \label{nonhomog-homotopy-theory-equivalence}
 The functors $A^\bu\longmapsto\Br^\bu(A^\bu)$ and $C^\bu\longmapsto
\Cob^\bu(C^\bu)$ induce mutually inverse equivalences between
the category of positively cohomologically graded DG\+algebras
$A^\bu$ with quasi-isomorphisms inverted and the category of
nonnegatively cohomologically graded conilpotent DG\+coalgebras $C^\bu$
with quasi-isomorphisms inverted.
\end{thm}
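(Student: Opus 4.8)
The plan is to deduce the equivalence formally from the two adjunction results already recorded, once we check that both constructions respect the grading restrictions and descend to the localized categories. First I would verify that the functors land in the stated subcategories. If $A^\bu$ is positively cohomologically graded, then $A_+^\bu$ is concentrated in degrees $\ge1$, so $A_+[1]$ sits in degrees $\ge0$; under the total grading each tensor power $A_+[1]^{\ot n}$ stays in nonnegative degrees, whence $\Br^\bu(A^\bu)$ is nonnegatively cohomologically graded (and conilpotent, being a tensor coalgebra). Dually, if $C^\bu$ is nonnegatively graded, then $C_+[-1]$ sits in degrees $\ge1$, so $C_+[-1]^{\ot n}$ lives in degrees $\ge n$; hence the degree-zero part of $\Cob^\bu(C^\bu)$ is only the $n=0$ summand~$k$, and $\Cob^\bu(C^\bu)$ is positively cohomologically graded and augmented. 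Thus $\Br^\bu$ and $\Cob^\bu$ define functors between precisely the two categories appearing in the theorem.

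Next I would observe that both functors send quasi-isomorphisms to quasi-isomorphisms, so that they descend to functors between the localized categories. For the bar construction this was noted in the discussion above: the filtration by the number of tensor factors is increasing and exhaustive, the associated Eilenberg--Moore spectral sequence converges, and a quasi-isomorphism of augmented DG\+algebras induces an isomorphism on the $E_1$\+page and hence a quasi-isomorphism of bar constructions. For the cobar construction the corresponding statement is precisely Proposition~\ref{nonnegative-dg-coalgebras-cobar-construction}, whose nonnegative-grading hypothesis is exactly what makes the relevant decreasing filtration $G$ finite in each total cohomological degree and thereby forces convergence.

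Finally I would invoke the bar--cobar adjunction. By Proposition~\ref{bar-cobar-adjunction}(a) the counit $\Cob^\bu(\Br^\bu(A^\bu))\rarrow A^\bu$ is a quasi-isomorphism for every positively graded $A^\bu$, and by part~(b) the unit $C^\bu\rarrow\Br^\bu(\Cob^\bu(C^\bu))$ is a quasi-isomorphism for every conilpotent $C^\bu$. These adjunction morphisms are natural in $A^\bu$ and $C^\bu$, so they descend to natural transformations between the localized functors, where, being quasi-isomorphisms, they become natural isomorphisms $\Cob^\bu\circ\Br^\bu\simeq\mathrm{Id}$ and $\mathrm{Id}\simeq\Br^\bu\circ\Cob^\bu$. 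This exhibits the two descended functors as mutually inverse equivalences of the localized categories, as desired.

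The genuine mathematical content is entirely absorbed into Propositions~\ref{nonnegative-dg-coalgebras-cobar-construction} and~\ref{bar-cobar-adjunction}, so what remains is largely grading bookkeeping together with the formal localization argument. The one point deserving emphasis---and the real reason the nonnegative-grading hypothesis cannot be dropped---is the descent of the cobar functor: without it $\Cob^\bu$ need not preserve quasi-isomorphisms (see Remark~\ref{cobar-construction-counterex}), so the would-be inverse functor would fail even to exist on the homotopy category, although the bar--cobar adjunction at the DG level is itself formal. The conilpotency assumption on $C^\bu$ is likewise essential, since, as the warning after Proposition~\ref{bar-cobar-adjunction} records, it is needed for the unit morphism of part~(b) to exist and be a quasi-isomorphism.
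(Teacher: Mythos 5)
Your proposal is correct and follows essentially the same route as the paper's own proof: the grading bookkeeping showing that $\Br^\bu$ and $\Cob^\bu$ exchange the two subcategories, combined with Proposition~\ref{nonnegative-dg-coalgebras-cobar-construction} (descent of the cobar functor to the localization) and Proposition~\ref{bar-cobar-adjunction} (the adjunction morphisms are quasi-isomorphisms). The paper states this more tersely, but the content is identical; your extra remarks on why the grading and conilpotency hypotheses are indispensable are accurate and match the paper's surrounding discussion.
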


\begin{proof}
 Having in mind the results of
Propositions~\ref{nonnegative-dg-coalgebras-cobar-construction}
and~\ref{bar-cobar-adjunction}, it suffices to notice that the bar
construction takes positively cohomologically graded augmented
DG\+algebras to nonnegatively cohomologically graded conilpotent
DG\+coalgebras, while the cobar construction takes nonnegatively
cohomologically graded coaugmented DG\+coalge\-bras to positively
cohomologically graded augmented DG\+algebras.
\end{proof}

 For comparison, let us now present the simply connected version of
the theory, that is the direct noncommutative analogue of
``the algebraic part'' of~\cite[Theorem~I]{Quil}.
 A coaugmented DG\+coalgebra $C^\bu$ is called \emph{negatively
cohomologically graded} if its coaugmentation cokernel $C^\bu_+$ is
concentrated in the negative cohomological degrees, that is
$C^i_+=0$ for all $i\ge0$.
 Clearly, any negatively cohomologically graded DG\+coalgebra
is conilpotent.
 Let us call a negatively cohomologically graded DG\+coalgebra
\emph{simply connected} if $C^{-1}=0$, i.~e., $C^i_+=0$ for
all $i\ge-1$.

\begin{prop} \label{simply-connected-dg-coalgebras}
 Any quasi-isomorphism $f\:C^\bu\rarrow D^\bu$ between two simply
connected negatively cohomologically graded DG\+coalgebras
$C^\bu=(\dotsb\to C^{-3}\to C^{-2}\to 0\to k)$ and 
$D^\bu=(\dotsb\to D^{-3}\to D^{-2}\to 0\to k)$ induces
a quasi-isomorphism of the cobar constructions
$\Cob^\bu(f)\:\Cob^\bu(C^\bu)\rarrow\Cob^\bu(D^\bu)$.
\end{prop}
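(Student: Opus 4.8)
The plan is to imitate the proof of Proposition~\ref{nonnegative-dg-coalgebras-cobar-construction} almost verbatim, using the same decreasing filtration of the cobar construction by the number of tensor factors, and then to check that the simply connected hypothesis is exactly what makes this filtration finite at every total cohomological degree. Once finiteness is known, the associated spectral sequence converges and a filtered quasi-isomorphism becomes a genuine quasi-isomorphism, so the whole statement reduces to a comparison on the associated graded level.

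Concretely, for any coaugmented DG\+coalgebra $E^\bu$ I would again set $G^p\Cob^\bu(E^\bu)=\bigoplus_{n\ge p}E_+[-1]^{\ot n}$. This is a decreasing filtration compatible with the multiplication and the differential, because the comultiplication summand $\d$ of the cobar differential raises the number of tensor factors while the internal summand $d$ preserves it; hence on the associated graded DG\+algebra only $d$ survives and $\gr_G\Cob^\bu(E^\bu)=\bigoplus_n(E_+[-1])^{\ot n}$ with the differential induced from $E^\bu$. Since we work over a field, the K\"unneth formula gives $H^*(\gr_G\Cob^\bu(E^\bu))=\bigoplus_n(H^*(E_+)[-1])^{\ot n}$, and a quasi-isomorphism $f\:C^\bu\rarrow D^\bu$ of DG\+coalgebras, inducing an isomorphism $H^*(C_+)\simeq H^*(D_+)$, therefore induces a quasi-isomorphism $\gr_G\Cob^\bu(f)$ on the associated graded algebras.

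It remains to establish finiteness of the filtration at each total degree, and this is the single place where simple connectivity enters. Here I would observe that the hypothesis $C^i_+=0$ for all $i\ge-1$ forces every tensor factor to lie in some $C_+^{i_j}$ with $i_j\le-2$; after the shift $[-1]$ each factor contributes $i_j+1\le-1$ to the total grading, so an $n$\+fold tensor lives in total cohomological degree $\sum_j(i_j+1)\le-n$. Consequently $G^{-N+1}\Cob^N(C^\bu)=0$ for every integer $N$ (and likewise for $D^\bu$), whence at each total degree the decreasing filtration runs through only finitely many steps $G^0\supseteq G^1\supseteq\dotsb\supseteq 0$. A filtered morphism of complexes that induces a quasi-isomorphism on the associated graded objects and whose filtrations are finite in each degree is itself a quasi-isomorphism; applying this to $\Cob^\bu(f)$ finishes the argument.

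I expect the only genuinely delicate point to be the degree bookkeeping in the previous paragraph: it is essential that each tensor factor \emph{strictly} lowers the total degree, and this fails under the weaker hypothesis of mere negative grading, where a factor in $C_+^{-1}$ would contribute $0$ and allow arbitrarily long tensors in a fixed total degree, destroying finiteness of $G^\bu$. Thus the heart of the matter is recognizing that the condition $C^{-1}=0$ is precisely what converts the abstract comparison on associated graded objects into convergence of the spectral sequence; everything else is identical to the nonnegatively graded case already treated in Proposition~\ref{nonnegative-dg-coalgebras-cobar-construction}.
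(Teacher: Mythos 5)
Your proof is correct and follows essentially the same route as the paper: the same decreasing filtration $G$ by the number of tensor factors, the same K\"unneth comparison on the associated graded level, and the same observation that simple connectivity forces $G^{n+1}\Cob^{-n}(C^\bu)=0$ (your $G^{-N+1}\Cob^N=0$ is the same statement), so the filtration is finite in each total degree. Your remark that the hypothesis $C^{-1}=0$ is exactly what makes each tensor factor strictly lower the total degree is precisely the point the paper's proof relies on.
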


\begin{proof}
 The argument is similar to the proof of
Proposition~\ref{nonnegative-dg-coalgebras-cobar-construction}.
 Once again, one observes that the decreasing filtrations $G$ on
the cobar constructions $\Cob^\bu(C^\bu)$ and $\Cob^\bu(D^\bu)$
are finite at every cohomological degree in our assumptions.
 Indeed, one has $G^{n+1}\Cob^{-n}(C^\bu)=0=G^{n+1}\Cob^{-n}(D^\bu)$
for every integer~$n$.
\end{proof}

 An augmented DG\+algebra $A^\bu$ is called \emph{negatively
cohomologically graded} if one has $A_+^i=0$ for all $i\ge0$.

\begin{thm}
 The functors $A^\bu\longmapsto\Br^\bu(A^\bu)$ and $C^\bu\longmapsto
\Cob^\bu(C^\bu)$ induce mutually inverse equivalences between
the category of negatively cohomologically graded augmented
DG\+algebras $A^\bu$ with quasi-isomorphisms inverted and
the category of simply connected negatively cohomologically graded
DG\+coalgebras $C^\bu$ with quasi-isomorphisms inverted.
\end{thm}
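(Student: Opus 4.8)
The plan is to reproduce the proof of Theorem~\ref{nonhomog-homotopy-theory-equivalence} almost verbatim, with Proposition~\ref{nonnegative-dg-coalgebras-cobar-construction} replaced by its simply connected counterpart, Proposition~\ref{simply-connected-dg-coalgebras}. Two ingredients are needed. First, both functors $\Br^\bu$ and $\Cob^\bu$ must descend to the localized categories, i.e., carry quasi-isomorphisms to quasi-isomorphisms on the relevant subcategories. Second, the adjunction morphisms of Proposition~\ref{bar-cobar-adjunction} must be quasi-isomorphisms, so as to furnish the natural isomorphisms $\Cob^\bu\Br^\bu\simeq\id$ and $\id\simeq\Br^\bu\Cob^\bu$ exhibiting the two localized functors as mutually inverse.

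For the descent to localizations, the bar construction sends quasi-isomorphisms of augmented DG\+algebras to quasi-isomorphisms of bar constructions, as recalled at the beginning of this section via the Eilenberg--Moore spectral sequence. Dually, Proposition~\ref{simply-connected-dg-coalgebras} guarantees that a quasi-isomorphism between simply connected negatively cohomologically graded DG\+coalgebras induces a quasi-isomorphism of cobar constructions. For the adjunction morphisms, part~(a) of Proposition~\ref{bar-cobar-adjunction} applies to any augmented DG\+algebra, while part~(b) applies in the present setting because every negatively cohomologically graded DG\+coalgebra is conilpotent; this is exactly the point where the conilpotency hypothesis of Proposition~\ref{bar-cobar-adjunction}(b), without which the adjunction morphism need not even exist, is automatically available here.

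The one step carrying genuine content — which I regard as the main (if elementary) step — is verifying that the two functors actually land in the asserted subcategories, and this reduces to bookkeeping with the degree shift. If $A^\bu$ is negatively cohomologically graded, then $A_+$ is concentrated in degrees $\le -1$, so in $\Br(A)=\bigoplus_n A_+[1]^{\ot n}$ the summand with $n$ tensor factors lies in total degree $i-n\le -2n$; for $n\ge 1$ this is $\le -2$, whence $\Br^\bu(A^\bu)$ is \emph{simply connected} (not merely conilpotent) and negatively graded. Conversely, if $C^\bu$ is simply connected negatively cohomologically graded, then $C_+$ is concentrated in degrees $\le -2$, so in $\Cob(C)=\bigoplus_n C_+[-1]^{\ot n}$ the $n$\+fold summand lies in total degree $i+n\le -n$; for $n\ge 1$ this is $\le -1$, whence $\Cob^\bu(C^\bu)$ is negatively cohomologically graded and augmented. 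The delicate point — and the reason the $C^{-1}=0$ condition rather than mere nonnegativity is the correct hypothesis — is precisely that the shift $[1]$ in the bar construction turns the bound ``degrees $\le -1$'' on $A_+$ into ``degrees $\le -2$'' on the augmentation cokernel of $\Br^\bu(A^\bu)$, matching the simply connected condition exactly; once this matching of bounds across the shift is confirmed in both directions, the equivalence follows formally from the two ingredients above.
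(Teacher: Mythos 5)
Your proposal is correct and follows essentially the same route as the paper: the paper's own proof likewise reduces to observing that the bar and cobar constructions carry the two subcategories into one another, and then invokes Proposition~\ref{simply-connected-dg-coalgebras} together with Proposition~\ref{bar-cobar-adjunction} exactly as in the proof of Theorem~\ref{nonhomog-homotopy-theory-equivalence}. Your degree bookkeeping (degrees $\le -2n$ for the bar summands and $\le -n$ for the cobar summands) is the correct verification of the step the paper leaves to the reader.
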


\begin{proof}
 Here one has to notice that the bar construction takes negatively
cohomologically graded augmented DG\+algebras to simply connected
negatively cohomologically graded DG\+coalgebras, while the cobar
construction takes simply connected negatively cohomologically graded
DG\+coalgebras to negatively cohomologically graded augmented
DG\+algebras.
 Otherwise the argument is similar to the proof of
Theorem~\ref{nonhomog-homotopy-theory-equivalence} and based
on the result of Proposition~\ref{simply-connected-dg-coalgebras}.
\end{proof}

\begin{rem}  \label{cobar-construction-counterex}
 The assertions of the above theorems can be modified so as to hold
for arbitrary augmented DG\+algebras and conilpotent DG\+coalgebras.
 One just has to replace the class of quasi-isomorphisms of
DG\+coalgebras with a finer class of \emph{filtered quasi-isomorphisms}
of conilpotent DG\+coalgebras, which are to be inverted in order to
obtain a category equivalent to the category of augmented DG\+algebras
with quasi-isomorphisms inverted (see~\cite[Section~4]{Hin2}
or~\cite[Section~6.10]{Pkoszul}).
 In the form stated above, on the other hand, the assertions of
the theorems do \emph{not} hold already for the negatively
cohomologically graded DG\+coalgebras that are not simply connected---in
fact, this is the class of DG\+coalgebras for which the difference
between quasi-isomorphisms and filtered quasi-isomorphisms becomes
essential.
 It suffices to consider a morphism between two different augmented
$k$\+algebras (viewed as DG\+algebras concentrated in cohomological
degree zero) $A\rarrow B$ inducing an isomorphism
of the Tor spaces $\Tor^A_*(k,k)\simeq\Tor^B_*(k,k)$.
 Then the induced morphism of the bar constructions
$\Br^\bu(A)\rarrow\Br^\bu(B)$ is a quasi-isomorphism of negatively
cohomologically graded conilpotent DG\+coalgebras that is transformed
by the cobar construction into a morphism of DG\+algebras
$\Cob^\bu(\Br^\bu(A))\rarrow\Cob^\bu(\Br^\bu(B))$
with two different cohomology algebras $A$ and~$B$
\cite[Remark~6.10]{Pkoszul}.
\end{rem}

\Section{$K(\pi,1)$-ness + Quasi-Formality Imply Koszulity}
\label{kpi1+quasiform-imply-koszul}

 We refer for the definitions of the bar construction $\Br^\bu(A^\bu)$
of an augmented DG\+algebra $A^\bu$ to
Section~\ref{koszul-implies-quasiform} and of the cobar construction
$\Cob^\bu(C^\bu)$ of a coaugmented DG\+coalgebra $C^\bu$ to
Section~\ref{noncommutative-homotopy-secn}.
 The definition of the cobar construction $\Cob^\bu(D)$ of a coalgebra
$D$ was given previously in Section~\ref{koszul-implies-kpi1};
it is but the particular case of the construction of
Section~\ref{noncommutative-homotopy-secn} corresponding to
the situation of a DG\+coalgebra $C^\bu=D$ concentrated in
the cohomological degree~$0$.
 The definition of the conilpotency property of a coalgebra $C$
can be also found in Section~\ref{koszul-implies-kpi1}.

 The construction of the (tensor) Massey operations on the cohomology
algebra of an augmented DG\+algebra $A^\bu$, understood as
the higher differentials in the algebraic Eilenberg--Moore spectral
sequence (associated with a natural increasing filtration
on the bar-complex $\Br^\bu(A^\bu)$), was introduced and discussed
in Section~\ref{koszul-implies-quasiform}.
 An augmented DG\+algebra $A^\bu$ is called \emph{quasi-formal} if all
these Massey operations vanish.

 Finally, we recall that a graded algebra $H^*$ over a field~$k$
is called \emph{Koszul} if it is concentrated in the positive degrees,
that is $H^i=0$ for $i<0$ and $H^0=k$, and its bigraded Tor coalgebra
(computed by the internally graded DG\+coalgebra $\Br^\bu(H^*)$)
is concentrated in the diagonal grading, i.~e.,
$\Tor^{H^*}_{ij}(k,k)=0$ for $i\ne j$.

 Let $A^\bu$ be an augmented DG\+algebra over a field~$k$.
 We will say that a DG\+algebra $A^\bu$ is \emph{of the $K(\pi,1)$ type}
(or just simply ``a $K(\pi,1)$'') if there exists a conilpotent
coalgebra $C$ over~$k$ such that the DG\+algebra $A^\bu$ can be
connected with the DG\+algebra $\Cob^\bu(C)$ by a chain of
quasi-isomorphisms of augmented DG\+algebras.
 Let us emphasize that, in this definition, the coalgebra $C$,
if viewed as a DG\+coalgebra, must be concentrated in
the cohomological degree~$0$.
 The conilpotency condition on $C$ is of key importance.
 Here the (coassociative and counital, but otherwise arbitrary)
conilpotent coalgebra $C$ plays the role of the conilpotent
coenveloping coalgebra of the conilpotent Lie coalgebra of
a $k$\+complete fundamental group~$\pi$.
 So one could as well write ``$A^\bu=K(C,1)$''.
 (Notice the degree~$1$ homological shift transforming the homotopy
groups of a rational homotopy type endowed with their Whitehead
bracket into a graded Lie (super)algebra.)

 The connection between Koszulity and the Massey operation vanishing was
first pointed out by Priddy~\cite[Section~8]{Pr} (cf.~\cite{LPWZ}),
who was working with algebras endowed with an important positive grading
(in addition to some other possible gradings whose role was to ensure
the locally finite dimension with respect to the multidegree).
 In our language, the result of~\cite[Proposition~8.1]{Pr} can be
reformulated as the following theorem, in which one considers
a coalgebra $C$ endowed with an ``internal'', rather than
a ``cohomological'', grading.
 We refer to~\cite[Section~2]{PV} for the background material about
positively graded coalgebras (see also the next
Section~\ref{self-consistency}).

\begin{thm}  \label{priddy-massey-vanishing}
 Let $C=k\oplus C_1\oplus C_2\oplus C_3\oplus\dotsb$ be a positively
(internally) graded coalgebra cogenerated by its first-degree
component~$C_1$ over a field~$k$.
 Let $A^\bu=\Cob^\bu(C)$ be the cobar DG\+algebra of the coalgebra~$C$
(which is viewed as a DG\+coalgebra concentrated entirely in
the cohomological degree~$0$).
 Then the graded coalgebra $C$ is Koszul if and only if
the differentials $d_{p-1}^{p,q}\:E_{p-1}^{p,q}\rarrow
E_{p-1}^{1,\.q-p+2}$
$$
 (H^*(A^\bu_+)^{\ot p})^q\ldarrow H^{q-p+2}(A^\bu_+)
$$
in the Eilenberg--Moore spectral sequence of the DG\+algebra $A^\bu$
vanish for $p\ge3$.
\end{thm}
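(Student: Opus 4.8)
The plan is to run the algebraic Eilenberg--Moore spectral sequence $E_r^{pq}(A^\bu)$ of Section~\ref{koszul-implies-quasiform} while keeping track of the extra internal grading~$j$ induced by the grading of~$C$, which every differential preserves. Write $H=H^*(A^\bu)=\Ext_C^*(k,k)$, bigraded by the cohomological (cobar) degree~$s$ and the internal degree~$j$, so that $H^{s,(j)}=\Ext_C^{s,j}(k,k)$. Two boundary constraints coming from the grading of~$C$ will do the real work. Since $\Cob^s(C)=C_+^{\ot s}$ has internal degrees $\ge s$, one has $H^{s,(j)}=0$ for $j<s$; and since $C$ is cogenerated by~$C_1$, the primitives of~$C$ (which compute~$H^1$) sit in internal degree~$1$, so $H^{1,(j)}=0$ for $j\neq1$. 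On the $E_1$\+page each tensor factor lies in $s$\+degree $\ge1$ and in internal degree at least its $s$\+degree, whence $E_r^{pq,(j)}=0$ unless $p\le q\le j$, and in each fixed internal degree the whole bicomplex occupies only a finite range.

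Next I would identify the abutment. As $C$ is positively graded with $C_0=k$ it is conilpotent, so Proposition~\ref{bar-cobar-adjunction}(b) provides a quasi-isomorphism $C\rarrow\Br^\bu(\Cob^\bu(C))=\Br^\bu(A^\bu)$; hence $H^*(\Br^\bu(A^\bu))\simeq C$, concentrated in total cohomological degree~$0$. Because the total cohomological grading on $\Br^\bu(A^\bu)$ equals $q-p$, the spectral sequence abuts to a graded space living entirely on the diagonal $q=p$. This, together with the finiteness just noted (which also guarantees convergence in each internal degree), is the one external input; the rest is formal.

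For the direction \emph{Koszul implies vanishing}, I would argue that if $C$ is Koszul then $H^{s,(j)}=0$ for $s\neq j$, so in the bar-complex of~$H$ computing $E_2^{pq}=\Tor^H_{pq}(k,k)$ every factor lies on the diagonal and hence $E_2^{pq,(j)}=0$ unless $q=j$. Each differential $d_r$ with $r\ge2$ strictly lowers~$q$ (by $r-1$), so it lands in a zero group; in particular all the $d_{p-1}^{p,q}$ vanish. For the converse, the key observation is that the stated hypothesis is exactly the collapse of the first column: the only differentials meeting $E_r^{1,q}$ for $r\ge2$ are the incoming ones $d_{p-1}^{p,\,\cdot}$ with $p=1+r\ge3$ (the outgoing ones die for degree reasons), so the hypothesis gives $E_2^{1,q,(j)}=E_\infty^{1,q,(j)}$. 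But $E_\infty^{1,q}$ is a subquotient of $H^{q-1}$ of the abutment, which vanishes unless $q=1$; hence $\Tor^H_{1,q}(k,k)=E_2^{1,q}=0$ for $q\neq1$, i.e.\ the algebra $H=\Ext_C^*(k,k)$ is generated in cohomological degree~$1$. Together with $H^1=H^{1,(1)}$ from cogeneration, each $H^s$ is spanned by products of $s$ generators, each of internal degree~$1$, so $H^{s,(j)}=0$ for $j\neq s$ --- precisely Koszulity of~$C$.

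I expect the only genuine obstacle to be the triple-grading bookkeeping: fixing conventions so that the abutment is really $C$ placed in total degree~$0$, verifying that the theorem's $d_{p-1}^{p,q}\colon E_{p-1}^{p,q}\rarrow E_{p-1}^{1,q-p+2}$ are exactly the incoming differentials of the $p=1$ column, and confirming that $E_2^{1,\bullet}$ is indeed the space of algebra indecomposables of~$H$. Once these identifications and the two boundary constraints $H^{s,(j)}=0$ ($j<s$) and $H^{1,(j)}=0$ ($j\neq1$) are in place, both implications are short and purely formal.
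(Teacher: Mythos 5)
Your proof is correct, and its skeleton coincides with the paper's: both run the algebraic Eilenberg--Moore spectral sequence of $\Br^\bu(A^\bu)$, identify the abutment with $C$ placed in total degree~$0$ via Proposition~\ref{bar-cobar-adjunction}(b), and observe that the differentials $d_{p-1}^{p,q}$ with $p\ge3$ are precisely the ones entering the column $p=1$, so that their vanishing is equivalent to $E_2^{1,n}=E_\infty^{1,n}=0$ for $n\ge2$, i.e.\ to $H^*(A^\bu)=\Ext_C^*(k,k)$ being generated by $H^1$. Where you genuinely diverge is in how the bridge between that generation condition and Koszulity of $C$ is built. The paper cites \cite[Propositions~1 and~2]{PV} for the equivalence ``a positively graded coalgebra cogenerated by $C_1$ is Koszul iff $\Ext_C^*(k,k)$ is generated in degree one,'' and in the forward direction uses only the generation hypothesis (the targets $E_2^{1,n}$, $n\ge2$, vanish, and the sources vanish for $p>q$). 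You instead inline that input through the internal grading: the two boundary constraints $H^{s,(j)}=0$ for $j<s$ and $H^{1,(j)}=0$ for $j\ne1$ show that generation in degree one forces $\Ext_C^{*,*}(k,k)$ onto the diagonal, which is the cobar-complex form of the definition of Koszulity of a graded coalgebra; and in the forward direction you use diagonality together with preservation of the internal grading by all differentials to kill every $d_r$ with $r\ge2$ at once. The outcome is a self-contained argument that also makes convergence of the spectral sequence transparent (everything is finite in each fixed internal degree) and proves slightly more in the forward direction (full degeneration at $E_2$, i.e.\ quasi-formality, consistent with the Corollary in Section~\ref{koszul-implies-quasiform}); the price is re-deriving a fact the paper is content to quote.
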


\begin{proof}
 By~\cite[Propositions~1 and~2]{PV}, a positively internally graded
coalgebra $C$ cogenerated by its first-degree component is Koszul if
and only if its cohomology algebra $\Ext_C^*(k,k)=H^*(A^\bu)$ is
generated by $\Ext^1_C(k,k)=H^1(A^\bu)$.
 The latter condition does not depend on the internal grading on
the coalgebra~$C$.
 Furthermore, any coalgebra admitting a positive grading
is conilpotent.
 Hence it remains to apply part~(a) of the following proposition.
 (Notice also that, by~\cite[Proposition~3]{PV}, a positively
internally graded coalgebra $C$ cogenerated by its first-degree
component $C_1$ is Koszul if and only if the algebra $H^*(A^\bu)$
is Koszul.)
\end{proof}

\begin{prop}
 Let $A^\bu=\Cob^\bu(C)$ be the cobar DG\+algebra of a conilpotent
coalgebra~$C$ (viewed as a DG\+coalgebra concentrated in
the cohomological degree~$0$).  Then \par
\textup{(a)} the cohomology algebra $H^*(A^\bu)$ is multiplicatively
generated by $H^1(A^\bu)$ if and only if the differentials
$d_{p-1}^{p,q}\:E_{p-1}^{p,q}\rarrow E_{p-1}^{1,\,q-p+2}$
vanish for $p\ge3$; \par
\textup{(b)} the cohomology algebra $H^*(A^\bu)$ is quadratic if and
only if the differentials $d_{p-1}^{p,q}$ as well as the differentials
$d_{p-1}^{p+1,\.q}\:E_{p-1}^{p+1,\.q}\rarrow E_{p-1}^{2,\,q-p+2}$
$$
 (H^*(A^\bu_+)^{\ot p+1})^q\ldarrow
 (H^*(A^\bu_+)\ot H^*(A^\bu_+))^{q-p+2}
$$
vanish for $p\ge3$.
\end{prop}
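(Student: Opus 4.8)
The plan is to argue entirely inside the algebraic Eilenberg--Moore spectral sequence $E_r^{pq}=E_r^{pq}(A^\bu)$ of the bar construction $\Br^\bu(A^\bu)$ set up in Section~\ref{koszul-implies-quasiform}, exploiting the fact that for a conilpotent $C$ the abutment of this spectral sequence collapses onto a single total degree. First I would invoke Proposition~\ref{bar-cobar-adjunction}(b): since $A^\bu=\Cob^\bu(C)$ with $C$ conilpotent, the adjunction morphism $C\rarrow\Br^\bu(\Cob^\bu(C))=\Br^\bu(A^\bu)$ is a quasi-isomorphism of DG-coalgebras, and as $C$ sits in cohomological degree~$0$ this yields $H^t\Br^\bu(A^\bu)=0$ for $t\ne0$ (with $H^0\Br^\bu(A^\bu)\simeq C$). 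Recalling that the total cohomological degree on the bar construction is $t=q-p$ and that $E_\infty^{pq}=\gr^F_pH^{q-p}\Br^\bu(A^\bu)$, it follows that the limit page is concentrated on the diagonal: $E_\infty^{pq}=0$ whenever $q\ne p$.

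Next I would translate the two conditions on $H^*=H^*(A^\bu)$ into the language of the second page $E_2^{pq}=\Tor^{H^*}_{pq}(k,k)$, where $E_2^{pq}=0$ for $q<p$ and $E_2^{0,q}=0$ for $q>0$. The algebra $H^*$ is generated by $H^1$ exactly when $E_2^{1,q}=0$ for all $q\ge2$, and it is quadratic exactly when, in addition, $E_2^{2,q}=0$ for all $q\ge3$. The crucial observation is that the columns $p=1$ and $p=2$ carry no nonzero outgoing differentials in the relevant range: for $r\ge2$ the differentials $d_r^{1,\cdot}$ and $d_r^{2,\cdot}$ point into nonpositive homological degrees, the only borderline case $d_2^{2,q}\colon E_2^{2,q}\rarrow E_2^{0,q-1}$ having zero target for $q\ge2$. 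Hence for $q\ge2$ the groups $E_r^{1,q}$ form a tower of successive quotients whose only transition maps are the incoming differentials $d_r^{\,r+1,\,q+r-1}$, which are precisely the first family $d_{p-1}^{p,\cdot}$ ($p=r+1\ge3$); and for $q\ge3$ the groups $E_r^{2,q}$ similarly receive only the incoming differentials $d_r^{\,r+2,\,q+r-1}$, which are the second family $d_{p-1}^{p+1,\cdot}$.

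With this in place both equivalences fall out by playing the diagonal concentration of $E_\infty$ against the differentials. For~(a): if the first family vanishes then the column-$1$ tower is constant, so $E_2^{1,q}=E_\infty^{1,q}=0$ for $q\ge2$ and $H^*$ is generated by $H^1$; conversely, if $H^*$ is generated by $H^1$, then any first-family differential with nonzero source has $q\ge p$, hence lands in $E_{p-1}^{1,m}$ with $m=q-p+2\ge2$, a subquotient of $E_2^{1,m}=0$, and so vanishes. For~(b) the identical argument applied to the column $p=2$ shows that the second family vanishes if and only if $E_2^{2,q}=0$ for all $q\ge3$ (a nonzero source of $d_{p-1}^{p+1,q}$ forcing $q\ge p+1$, i.e.\ $m\ge3$); combining this with~(a) gives the equivalence with quadraticity. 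Conilpotency is what makes the two families responsible for \emph{all} of the off-diagonal $E_\infty$ in these two columns.

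The one genuinely substantive step is the identification of the abutment through bar--cobar duality, where conilpotency of $C$ is indispensable: without it the cohomology of $\Br^\bu(A^\bu)$ need not be concentrated in degree~$0$ (cf.~Remark~\ref{cobar-construction-counterex}), and the diagonal collapse of $E_\infty$ on which everything rests would fail. Everything after that is bookkeeping---verifying that columns~$1$ and~$2$ are acted on by exactly the two listed families of differentials and by nothing else---which I expect to be routine once the concentration of $E_\infty$ is secured.
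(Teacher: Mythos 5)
Your argument is correct and is essentially the paper's own proof: both use Proposition~\ref{bar-cobar-adjunction}(b) to identify $H^*\Br^\bu(A^\bu)$ with $C$ and hence concentrate $E_\infty$ on the diagonal, then match the two families of differentials with the only nontrivial transitions in the columns $p=1$ and $p=2$, using $E_1^{p,q}=0$ for $p>q$ in the forward directions. Your explicit check that the outgoing differentials from these columns vanish is a detail the paper leaves implicit, but otherwise the two arguments coincide.
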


\begin{proof}
 Part~(a): if the algebra $H^*(A^\bu)$ is multiplicatively generated
by $H^1$, then $E_2^{1,n}=\Tor^{H^*(A^\subbu)}_{1,\.n}(k,k)=0$ for all
$n\ge 2$; since one also has $E_1^{p,q}=(H^*(A^\bu_+)^{\ot p})^q=0$
for all $p>q$, it follows that the differentials $d_{p-1}^{p,q}$
vanish for $p\ge3$.

 Conversely, by Proposition~\ref{bar-cobar-adjunction}(b)
the DG\+coalgebra $\Br^\bu(A^\bu)$ is quasi-isomorphic to~$C$.
 So in the algebraic Eilenberg--Moore spectral sequence $E_r^{pq}$
from Section~\ref{koszul-implies-quasiform} we have
$E_\infty^{p,q}=\gr^F_pH^{p-q}\Br^\bu(A^\bu)=0$ for $p\ne q$,
and in particular $E_\infty^{1,n}=0$ for $n\ge2$.
 This is the situation which people colloquially describe as
``the cohomology $H^*(A^\bu)$ is generated by $H^1(A^\bu)$ using
Massey products''.
 (One should keep in mind that the DG\+algebra $A^\bu=\Cob^\bu(C)$ is
positively cohomologically graded by construction.)
 If all the differentials $d_r^{p,q}$ landing in $E_r^{1,n}$
vanish for $r\ge2$, it follows that $E_2^{1,n}=0$ for $n\ge2$,
so $H^*(A^\bu)$ is generated by~$H^1$ using the conventional
multiplication.

 Part~(b): we can assume that the algebra $H^*(A^\bu)$ is generated
by~$H^1$.
 If this algebra is also quadratic, then $E_2^{2,n}=
\Tor^{H^*(A^\subbu)}_{2,\.n}(k,k)=0$ for all $n\ge3$, so it follows that
the differentials $d_{p-1}^{p+1,\.q}$ vanish for $p\ge3$.
 Conversely, as we explained above, $E_\infty^{2,n}=0$ for $n\ge3$,
so if all the differentials landing in $E_r^{2,n}$ vanish for $r\ge2$,
then we can conclude that $E_2^{2,n}=0$ for $n\ge3$.
\end{proof}

 In the nonhomogeneous conilpotent setting we are working in,
the implication ``$K(\pi,1)$\+ness $+$ quasi-formality imply Koszulity''
becomes a bit more complicated than in
Theorem~\ref{priddy-massey-vanishing}, as the cohomology algebra
$H^*(A^\bu)=H^*\Cob^\bu(C)$ being generated by $H^1$ no longer implies
it being Koszul (cf.\ the final paragraphs of
Section~\ref{koszul-implies-quasiform}).
 The following theorem is the main result of this paper.

\begin{thm} \label{main-theorem}
 The cohomology algebra $H^*=H^*(A^\bu)$ of an augmented DG\+algebra
$A^\bu$ is Koszul if and only if the augmented DG\+algebra $A^\bu$
is simultaneously quasi-formal and of the\/ $K(\pi,1)$ type.
\end{thm}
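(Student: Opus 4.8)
The plan is to read all three properties off a single object, the algebraic Eilenberg--Moore spectral sequence $E_r^{pq}$ of the bar construction $\Br^\bu(A^\bu)$ introduced in Section~\ref{koszul-implies-quasiform}, whose second page is $E_2^{pq}=\Tor^{H^*}_{pq}(k,k)$ and whose limit is $E_\infty^{pq}=\gr^F_pH^{q-p}\Br^\bu(A^\bu)$. In this language Koszulity of $H^*$ says that $E_2^{pq}$ is concentrated on the diagonal $p=q$; quasi-formality says precisely that $E_2=E_\infty$; and, as I will explain, the $K(\pi,1)$ condition is equivalent to the cohomology of $\Br^\bu(A^\bu)$ being concentrated in degree~$0$, i.e.\ to $E_\infty^{pq}$ being concentrated on the diagonal (this is the form of the condition recorded in the Introduction). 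Any two of these three diagonal-concentration statements imply the third, and the two implications of the theorem correspond to the two ways of choosing which pair is the hypothesis.

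I would treat the implication ``$K(\pi,1)$-ness and quasi-formality imply Koszulity'' first, as it is essentially immediate. Both hypotheses are invariant under quasi-isomorphisms of augmented DG-algebras --- for quasi-formality by the argument already used to prove that formal DG-algebras are quasi-formal, and for $K(\pi,1)$-ness by its very definition --- so I may assume $A^\bu=\Cob^\bu(C)$ with $C$ a conilpotent coalgebra sitting in cohomological degree~$0$. Proposition~\ref{bar-cobar-adjunction}(b) then identifies $\Br^\bu(A^\bu)=\Br^\bu(\Cob^\bu(C))$ up to quasi-isomorphism with $C$, so that $H^m\Br^\bu(A^\bu)=0$ for $m\ne0$ and $E_\infty^{pq}=0$ off the diagonal. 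Quasi-formality supplies $E_2=E_\infty$, whence $\Tor^{H^*}_{pq}(k,k)=0$ for $p\ne q$; since $\Cob^\bu(C)$ is positively graded with $H^0=k$, this is exactly Koszulity.

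For the converse I would first invoke the intrinsic quasi-formality Corollary of Section~\ref{koszul-implies-quasiform}: a positively graded Koszul cohomology algebra forces the higher differentials $d_r^{pq}$, $r\ge2$, to vanish for bidegree reasons, so $A^\bu$ is automatically quasi-formal. Feeding $E_2^{pq}=0$ for $p\ne q$ (Koszulity) together with $E_2=E_\infty$ (quasi-formality just obtained) back into the spectral sequence gives $E_\infty^{pq}=0$ for $p\ne q$, i.e.\ the cohomology of $\Br^\bu(A^\bu)$ is concentrated in degree~$0$. I would then set $C:=H^0(\Br^\bu(A^\bu))$, a conilpotent coalgebra because it is the degree-zero cohomology of the conilpotent DG-coalgebra $\Br^\bu(A^\bu)$, and aim to realise the cohomological concentration as an honest comultiplicative quasi-isomorphism relating $\Br^\bu(A^\bu)$ and $C$ whose cobar construction is again a quasi-isomorphism. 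Granting this, Proposition~\ref{bar-cobar-adjunction}(a) yields
\[
 A^\bu\simeq\Cob^\bu(\Br^\bu(A^\bu))\simeq\Cob^\bu(C),
\]
exhibiting $A^\bu$ as a $K(\pi,1)$.

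The step requiring genuine care is this last one, since the cobar construction does not send arbitrary quasi-isomorphisms of DG-coalgebras to quasi-isomorphisms (Remark~\ref{cobar-construction-counterex}); it does so only in the nonnegatively graded range of Proposition~\ref{nonnegative-dg-coalgebras-cobar-construction}. To land in that range I would first replace $A^\bu$ by a quasi-isomorphic positively cohomologically graded augmented DG-algebra with $A^0=k$ --- a standard connective-model replacement, legitimate because Koszulity forces $H^{<0}=0$ and $H^0=k$, and harmless because both target conditions are quasi-isomorphism invariant. Once $A^\bu$ is positively graded, $\Br^\bu(A^\bu)$ is nonnegatively graded, so $H^0(\Br^\bu(A^\bu))$ coincides with the cocycle subcoalgebra $Z^0=\ker d^0$, and the inclusion $C\hookrightarrow\Br^\bu(A^\bu)$ is a quasi-isomorphism of nonnegatively graded conilpotent DG-coalgebras. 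Proposition~\ref{nonnegative-dg-coalgebras-cobar-construction} then turns it into a quasi-isomorphism of cobar constructions, closing the chain displayed above. Every remaining step is either a direct reading of the Eilenberg--Moore spectral sequence or an application of bar--cobar duality, so I expect this grading reduction to be the only genuine obstacle.
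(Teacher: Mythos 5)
Your proposal is correct and follows essentially the same route as the paper's own proof: the same reduction to a positively cohomologically graded model (Lemma~\ref{cofibrant-positively-graded}), the same use of Propositions~\ref{nonnegative-dg-coalgebras-cobar-construction} and~\ref{bar-cobar-adjunction}, and the same reading of the Eilenberg--Moore spectral sequence, with your ``any two of the three diagonal-concentration conditions imply the third'' framing being only a cosmetic reorganization. You also correctly isolate the one genuinely delicate point, namely that the cobar construction preserves quasi-isomorphisms only in the nonnegatively graded range, which is exactly where the paper places its emphasis.
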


\begin{proof}
 It was explained in Section~\ref{koszul-implies-quasiform} that
Koszulity of the cohomology algebra $H^*(A^\bu)$ implies vanishing
of the Massey products.
 The assertion that $A^\bu$ is a $K(\pi,1)$ whenever $H^*(A^\bu)$ is
Koszul, announced in the title of Section~\ref{koszul-implies-kpi1},
was not actually proven there (in our present setting) but rather
postponed; so we have to prove it now.
 We start with the following lemma.

\begin{lem}  \label{cofibrant-positively-graded}
 Let $A^\bu$ be an augmented DG\+algebra whose cohomology algebra
$H^*(A^\bu)$ is concentrated in the positive cohomological degrees.
 Then there exists a positively cohomologically graded
DG\+algebra $P^\bu$ together with a quasi-isomorphism
of augmented DG\+algebras $P^\bu\rarrow A^\bu$.
\end{lem}

\begin{proof}
 The construction of a cofibrant resolution of the DG\+algebra $A^\bu$
in the conventional model structure on the category of augmented
DG\+algebras (see~\cite{Hin1}, \cite{Jar},
or~\cite[Section~9.1]{Pkoszul}) provides the desired
DG\+algebra~$P^\bu$.
 One starts from a free graded algebra with generators corresponding
to representative cocycles of a chosen basis in $H^*(A^\bu_+)$, and
then iteratively adds to it new free generators whose differentials
kill the cohomology classes annihilated by the morphism into $A^\bu$.

 The observation is that all the cocycles that need to be killed at
each step, being linear combinations of products of at least two
generators of cohomological degrees~$\ge\nobreak1$, have cohomological
degrees~$\ge 2$.
 So all the new generators that one has to add at this step have
cohomological degrees~$\ge1$.
\end{proof}

 Thus we can assume our DG\+algebra $A^\bu$ to be positively
cohomologically graded; then its bar construction $\Br^\bu(A^\bu)$
is nonnegatively cohomologically graded.
 Now if the cohomology algebra $H^*(A^\bu)$ is Koszul, then it follows
from the Eilenberg--Moore spectral sequence that the cohomology
coalgebra $H^*\Br^\bu(A^\bu)$ of the DG\+coalgebra $\Br^\bu(A^\bu)$
is concentrated in cohomological degree~$0$.

 Hence the embedding $C\rarrow\Br^\bu(A^\bu)$ of the subcoalgebra
$C=\ker(d^0\:\!\Br^0(A^\bu)\to\Br^1(A^\bu))$ of the DG\+coalgebra
$\Br^\bu(A^\bu)$ is a quasi-isomorphism.
 By Proposition~\ref{nonnegative-dg-coalgebras-cobar-construction},
the induced morphism of the cobar constructions
$\Cob^\bu(C)\rarrow\Cob^\bu(\Br^\bu(A^\bu))$ is a quasi-isomorphism,
too.
 By Proposition~\ref{bar-cobar-adjunction}(a),
so is the adjunction morphism $\Cob^\bu(\Br^\bu(A^\bu))\rarrow A^\bu$.
 Finally, the coalgebra $C$ is conilpotent, since its ambient
DG\+coalgebra $\Br^\bu(A^\bu)$ is.
 We have shown that the DG\+algebra $A^\bu$ is a $K(\pi,1)$.

 Now suppose, as the title of this section suggests, that
the augmented DG\+algebra $A^\bu$ is a $K(\pi,1)$ and the Massey
products in its cohomology algebra $H^*(A^\bu)$ vanish.
 Then the augmented DG\+algebra $A^\bu$ is connected by a chain
of quasi-isomorphisms with the DG\+algebra $\Cob^\bu(C)$ for
a certain conilpotent coalgebra~$C$; we can simply assume that
$A^\bu=\Cob^\bu(C)$.
 In particular, the cohomology algebra $H^*(A^\bu)$ is concentrated
in the positive cohomological degrees.

 Applying Proposition~\ref{bar-cobar-adjunction}(b), we can conclude
that the DG\+coalgebra $\Br^\bu(A^\bu)$ is quasi-isomorphic to $C$,
so its cohomology coalgebra $H^*\Br^\bu(A^\bu)$ is concentrated in
cohomological degree~$0$.
 On the other hand, the Massey product vanishing means that one
has $E_2^{pq}=E_\infty^{pq}$ in the Eilenberg--Moore spectral sequence.
 As $E_\infty^{pq}=\gr^F_pH^{q-p}\Br^\bu(A^\bu)=0$ for $p\ne q$,
it follows that $E_2^{pq}=\Tor^{H^*(A^\subbu)}_{pq}(k,k)=0$.
 We have proven that the cohomology algebra $H^*(A^\bu)$ is Koszul.
\end{proof}

\begin{rem} \label{derived-completion}
 Applying the cobar and bar constructions to a nonconilpotent
coaugmented coalgebra $D$ produces a conilpotent DG\+coalgebra
$\Br^\bu(\Cob^\bu(D))$ with the zero-degree cohomology coalgebra
$H^0\Br^\bu(\Cob^\bu(D))$ isomorphic to the maximal conilpotent
subcoalgebra $C=\Nilp D$ of the coaugmented coalgebra~$D$
(see Section~\ref{koszul-implies-kpi1} for the definitions and
notation here and below).
 The DG\+coalgebra $\Br^\bu(\Cob^\bu(D))$ can be called
the DG\+coalgebra of \emph{derived conilpotent completion} of
a coaugmented coalgebra~$D$.
 The DG\+algebra $\Cob^\bu(D)$ is a $K(\pi,1)$ (i.~e., the cohomology
coalgebra of the DG\+coalgebra $\Br^\bu(\Cob^\bu(D))$ is concentrated
in cohomological degree~$0$) if and only if the embedding
$C\rarrow D$ induces a cohomology isomorphism
$\Ext^*_C(k,k)\simeq\Ext^*_D(k,k)$.

 Similarly, applying the cobar and bar constructions to
an augmented algebra $R$ produces a conilpotent DG\+coalgebra
$\Br^\bu(\Cob^\bu(R))$ with the zero cohomology algebra
$H^0\Br^\bu(\Cob^\bu(R))$ isomorphic to the coalgebra of pronilpotent
completion $C=R\comp$ of the augmented DG\+algebra~$R$.
 The DG\+coalgebra $\Br^\bu(\Cob^\bu(R))$ can be called
the DG\+coalgebra of \emph{derived pronilpotent completion} of
an augmented algebra~$R$.
 The DG\+algebra $\Cob^\bu(R)$ is a $K(\pi,1)$ (i.~e.,
the cohomology coalgebra of the DG\+coalgebra $\Br^\bu(\Cob^\bu(R))$
is concentrated in cohomological degree~$0$) if and only if
the natural map of the cohomology algebras $\Ext^*_C(k,k)
\rarrow\Ext^*_R(k,k)$ is an isomorphism.
 These are noncommutative analogues of the procedure of
rational completion of the space $K(\Gamma,1)$ with a discrete
group~$\Gamma$ in rational homotopy theory.

 These observations show, in particular, how to deduce
the assertions of
Theorems~\ref{maximal-conilpotent-subcoalgebra-cohomology}
and~\ref{pronilpotent-completion-coalgebra-cohomology}
from the ``Koszulity implies $K(\pi,1)$\+ness'' claim in
Theorem~\ref{main-theorem}.
\end{rem}

\Section{Self-Consistency of Nonhomogeneous Quadratic Relations}
\label{self-consistency}

 The aim of this section is to explain the thesis, formulated in
the introduction, about the connection between self-consistency
of systems of nonhomogeneous quadratic
relations~\eqref{234} with Koszul principal parts~\eqref{homog2}
and Koszulity of the cohomology algebra $H^*(C)$ of
the coalgebra $C$ defined by such relations.

 Self-consistency of algebraic relations is a fundamental concept
in algebra that is too general to allow a precise general
definition.
 On the other hand, examples of non-self-consistent systems of
relations are easily demonstrated.
 E.~g., consider the following system of nonhomogeneous quadratic
relations of the type~\eqref{210} in two variables $x$ and~$y$:
\begin{equation} \label{inconsistent}
\begin{cases}
 xy-y+1=0, \\
 yx-y=0.
\end{cases}
\end{equation}
 Looking on the relations~\eqref{inconsistent}, one might expect them
to define an ungraded associative algebra of the ``size'' of
the graded algebra with the relations $xy=yx=0$.
 However, proceeding to derive consequences of~\eqref{inconsistent},
one obtains
$$
 (xy)x = (y-1)x = y - x
$$
and
$$
 x(yx) = xy = y - 1
$$
hence $x-1=0$.
 Substituting $x=1$ into the first relation, one comes to $1=0$.
 So the whole algebra defined by~\eqref{inconsistent} vanishes.
 
 Self-consistency of nonhomogeneous quadratic relations of
the type~\eqref{210} is studied in the paper~\cite{Pcurv}
and the book~\cite[Chapter~5]{PP}.
 The main result, called ``the Poincar\'e--Birkhoff--Witt theorem
for nonhomogeneous quadratic algebras'', claims that it suffices
to perform computations with expressions of degree~$\le 3$
when checking self-consistency of relations of the type~\eqref{210}
with Koszul quadratic principal parts~\eqref{homog2}.

 Consequences of non-self-consistency of relations of
the type~\eqref{234} are a bit less dramatic.
 A typical example would be the single relation 
\begin{equation} \label{nonselfconsistent}
 x^2-y^3=0
\end{equation}
for two (noncommutative) variables $x$ and~$y$, implying
$$
 (x^2)x=y^3x\quad\text{and}\quad x(x^2)=xy^3,
$$
hence
\begin{equation} \label{nsc-leads-to}
 xy^3-y^3x=0.
\end{equation}
 No consequences comparable to~\eqref{nsc-leads-to} can be
derived from the quadratic principal part $x^2=0$ \,\eqref{homog2}
of the relation~\eqref{nonselfconsistent}.

 The system of relations
\begin{equation} \label{selfconsistent}
\begin{cases}
 x^2-y^3=0, \\
 xy-yx=0
\end{cases}
\end{equation}
is self-consistent, on the other hand (as we will see below).

 To sum up this informal discussion, one can say that a system
of nonhomogeneous relations~\eqref{210} or~\eqref{234} is
called \emph{self-consistent} if it defines an ungraded (co)algebra
of the same ``size'' as the graded (co)algebra defined by
the homogeneous relations~\eqref{homog2}.
 When the nonhomogeneous relations are not self-consistent,
the ungraded object they define is ``smaller'' than the graded
object defined by the homogeneous principal parts of the relations.
 So self-consistency of relations, considered in this context, appears
as a species or a variation of the notion of a flat deformation.

 In order to formally define self-consistency of
the relations~\eqref{234} in the sense we are interested in,
let us start with the following setup of complete algebras
before moving to coalgebras.
 Let $U$ be a (possibly) infinite-dimensional vector space over
a field~$k$, and let $V=U^*$ be the dual vector space endowed
with its natural locally linearly compact (profinite-dimensional)
topology.
 Consider the cofree conilpotent (tensor) coalgebra
$$
 F = \bigoplus\nolimits_{n=0}^\infty U^{\ot n}
$$
cogenerated by $U$ and the dual topological algebra
$$
 F^*=\prod\nolimits_{n=0}^\infty (U^{\ot n})^* =
 k \.\sqcap\. V \.\sqcap\. V\wot V\.\sqcap\. V\wot V\wot V
 \.\sqcap\.\dotsb
$$
where, by the definition, the completed tensor product of
the dual vector spaces to discrete vector spaces $U'$ and $U''$
is the dual vector space to the tensor product,
$U'{}^*\wot U''{}^*=(U'\ot U'')^*$.
 The conilpotent coalgebra $F$ is endowed with its natural
increasing coaugmentation filtration
$$
 N_mF=\bigoplus\nolimits_{n=0}^m U^{\ot m},
$$
and the topological algebra $F^*$ is endowed with the dual decreasing
augmentation (topological adic) filtration
$$
 N^mF^*=\prod\nolimits_{n=m}^\infty (U^{\ot n})^*=
 (U^{\ot m})^*\.\sqcap\.(U^{\ot m+1})^*\.\sqcap\.\dotsb
$$

 Let $R\subset N^2F^*$ be a closed vector subspace and
$J\subset N^2F^*$ be the closed two-sided ideal in the algebra $F^*$
generated by the subspace~$R$.
 The quotient algebra $A=F^*/J$ is endowed with the quotient topology
and the induced filtration $N^mA=N^mF^*/(N^mF^*\cap J)$.
 The associated graded algebra $\gr_NA$ can be defined as
the infinite product
$$
 \gr_NA=\prod\nolimits_{m=0}^\infty\gr_N^mA=
 \prod\nolimits_{m=0}^\infty N^mA/N^{m+1}A.
$$
 The algebra $\gr_NA$ is the quotient algebra of the algebra
$\gr_NF^*\simeq F^*$ by the closed ideal
$\gr_NJ=\prod_{m=0}^\infty N^mJ/N^{m+1}J$, where
$N^mJ=N^mF^*\cap J$.

 When the space of generating relations $R\subset N^2F^*$ is
homogeneous, that is $R=\prod_n R\cap (U^{\ot n})^*$, the algebra
$A\simeq\gr_NA$ is the product of its grading components,
$A=\prod_nA^n$.
 One has $A^0=k$, \ $A^1=U^*=V$, and
$A^n=(U^{\ot n})^*/R\cap (U^{\ot n})^*$.
 When one can choose the subspace $R$ so that $R\subset V\wot V
\subset N^2F^*$, the algebra $A$ is said to be \emph{defined by}
(\emph{homogeneous}) \emph{quadratic relations}.

 One says that the topological algebra $A$ is \emph{defined by
nonhomogeneous quadratic relations} if the subspace $R\subset N^2F^*$
can be chosen in such a way that the projection map
$R\rarrow N^2F^*/N^3F^*=V\wot V$ is injective.
 The image $\overline R\subset V\wot V$ of this map is
the space of principal quadratic parts~\eqref{homog2} of
the nonhomogeneous quadratic relations~\eqref{234} from~$R$.
 The quotient space $(V\wot V)/\overline R$ is the component
$\gr_N^2A=N^2A/N^3A$ of the associated graded algebra $\gr_NA$,
while the lower grading components are, as above,
$\gr_N^1A=V$ and $\gr_N^0A=k$.

 Denote by $\overline A$ the quotient algebra $F^*/\overline J$
of the algebra $F^*$ by the closed two-sided ideal $\overline J$ 
generated by the subspace $\overline R\subset V\wot V
\subset N^2F^*$.
 Then the identification of the spaces of generators
$\overline A\supset V\simeq\gr_N^1A\subset\gr_NA$ extends uniquely to
a surjective homomorphism of topological graded algebras
\begin{equation} \label{nonhomog-relations-graded-comparison}
 \overline A\lrarrow \gr_NA.
\end{equation}

 The map~\eqref{nonhomog-relations-graded-comparison} is always
an isomorphism in the degrees $n\le3$.
 In particular, one has $\overline A^0=k=\gr_N^0A$, \
$\overline A^1=V=\gr_N^1A$, \ $\overline A^2=(V\wot V)/
\overline R=\gr_N^2A$, and
$$
 \overline A^3=(V\wot V\wot V)/(V\wot\overline R+\overline R\wot V)=
 \gr_N^3A.
$$
 The example of the relation~\eqref{nonselfconsistent} illustrates
how the map~\eqref{nonhomog-relations-graded-comparison} can fail
to be an isomorphism in degree~$4$.
 One says that the system of nonhomogeneous quadratic relations
$R\subset N^2F^*$ is \emph{self-consistent} if the algebra $\gr_NA$
is defined by quadratic relations, or equivalently, if
the map~\eqref{nonhomog-relations-graded-comparison} is an isomorphism
in all the degrees.

 To see that the system of relations~\eqref{selfconsistent} is
self-consistent, one identifies the algebra $A$ defined
by~\eqref{selfconsistent} with the subalgebra in the algebra of
formal power series $k[[z]]$ topologically spanned by the monomials
$z^i$, \ $i=0$ or $i\ge2$, where $x=z^3$ and $y=z^2$.
 The decreasing filtration $N$ on the algebra $A$ is described by
the rules that $N^0A=A$, \ $N^1A$ is spanned by $z^i$, \ $i\ge2$, and
$N^mA=(N^1A)^m$ is spanned by $z^i$, \ $i\ge 2m$.
 One can check that the associated graded algebra $\gr_NA$ is indeed
isomorphic to the quotient algebra of the algebra of noncommutative
formal power series $k\{\{x,y\}\}$ by the closed ideal generated by
the quadratic principal parts~\eqref{homog2}
of the relations~\eqref{selfconsistent}
\begin{equation}
\begin{cases}
 x^2=0, \\
 xy-yx=0,
\end{cases}
\end{equation}
or, which is the same, the quotient algebra of the algebra of power
series $k[[x,y]]$ by the ideal generated by~$x^2$.

 For a more technical discussion of nonhomogeneous relations of
the type~\eqref{234}, the language of conilpotent coalgebras is
preferable.
 Let $C$ be a conilpotent coalgebra over a field~$k$, in
the sense of the definition in Section~\ref{koszul-implies-kpi1}.
 The algebra $A$ above is just the topological algebra dual to $C$,
that is $A=C^*$.

 Let $N_mC=\ker(C\to (C/k)^{\ot m+1})$ denote the canonical increasing
coaugmentation filtration on a conilpotent coalgebra~$C$ (cf.\ the proof
of Proposition~\ref{bar-cobar-adjunction}(b)).
 The filtration $N$ is compatible with the comultiplication on $C$, so
the associated graded vector space $\gr^NC=\bigoplus_m N_mC/N_{m-1}C$
is endowed with a natural structure of positively graded coalgebra.

 We refer to~\cite[Section~2.1]{Pbogom} for the definition of
the cohomology of comodules over a coaugmented coalgebra~$C$.
 In particular, for a left $C$\+comodule $M$, the space
$H^0(C,M)$ is the kernel of the coaction map $M\rarrow C_+\ot_kM$,
where $C_+=C/k$.
 The \emph{cofree} left $C$\+comodule \emph{cogenerated} by
a $k$\+vector space $U$ is the $C$\+comodule $C\ot_kU$.

\begin{lem} \label{cogenerators-comodule}
 Let $M$ be a left $C$\+comodule.  Then \par
\textup{(a)} morphisms of left $C$\+comodules $M\rarrow C\ot_kU$
correspond bijectively to morphisms of $k$\+vector spaces
$M\rarrow U$; \par
\textup{(b)} a morphism of left $C$\+comodules
$f\:M\rarrow C\ot_kU$ is injective if and only if
the morphism $H^0(C,f)\:H^0(C,M)\rarrow H^0(C\;C\ot_kU)=U$
is injective.
\end{lem}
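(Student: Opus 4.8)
The plan is to handle the two parts separately: part~(a) is the universal property of the cofree comodule and uses nothing about $C$, while part~(b) will be reduced, in its nontrivial direction, to the conilpotency of~$C$ via an induction on the coaugmentation filtration of~$M$. Write $\rho\colon M\rarrow C\ot_k M$ for the coaction and $\varepsilon\colon C\rarrow k$ for the counit. For part~(a), to a $C$\+comodule morphism $g\colon M\rarrow C\ot_k U$ I assign the $k$\+linear map $(\varepsilon\ot\id_U)\circ g\colon M\rarrow U$, and conversely to a $k$\+linear map $h\colon M\rarrow U$ I assign $(\id_C\ot h)\circ\rho\colon M\rarrow C\ot_k M\rarrow C\ot_k U$, which is a comodule morphism by coassociativity of~$\rho$. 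That the two assignments are mutually inverse is a direct check: one composite collapses to the counit axiom $(\varepsilon\ot\id_M)\circ\rho=\id_M$, and the other follows by applying $\id_C\ot\varepsilon\ot\id_U$ to the comodule compatibility equation for~$g$ and using the counit axiom once more. Taking $M=C\ot_k U$ shows, in particular, that $\varepsilon\ot\id_U$ identifies $H^0(C\;C\ot_kU)$ with~$U$, as the statement asserts.

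For part~(b), one direction is immediate: any comodule morphism $f$ carries $H^0(C,M)=\ker(M\rarrow C_+\ot_kM)$ into $H^0(C\;C\ot_kU)$, and $H^0(C,f)$ is just the restriction of~$f$ to this subspace, so $f$ injective forces $H^0(C,f)$ injective. The substantial direction is the converse, and here conilpotency enters. I would use that, since $\Nilp C=C$, every $C$\+comodule $M$ is exhausted by its coaugmentation filtration $N_\bullet M$, where $N_nM=\ker\bigl(M\to C_+^{\ot n+1}\ot_k M\bigr)$ is the kernel of the $(n{+}1)$\+fold iterated reduced coaction; thus $N_0M=H^0(C,M)$ and $M=\bigcup_nN_nM$. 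The compatibility I need, which follows from coassociativity, is that the reduced coaction $\rho_+$ restricts to a map $N_nM\rarrow C_+\ot_k N_{n-1}M$.

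Granting this, I would show by induction on~$n$ that $f|_{N_nM}$ is injective. The base case $n=0$ is precisely the hypothesis that $H^0(C,f)$ is injective. For the step, let $m\in N_nM$ with $f(m)=0$; applying $\id_C\ot f$ to $\rho(m)=1\ot m+\rho_+(m)$ and using $f(m)=0$ gives $(\id_C\ot f)\bigl(\rho_+(m)\bigr)=0$. By the compatibility, $\rho_+(m)\in C_+\ot_k N_{n-1}M$; since $f|_{N_{n-1}M}$ is injective by the inductive hypothesis and tensoring an injection with $\id_{C_+}$ over the field~$k$ preserves injectivity, we get $\rho_+(m)=0$, i.e.\ $m\in N_0M\subseteq N_{n-1}M$, whence $m=0$. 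As $M=\bigcup_nN_nM$, this proves $f$ injective.

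The main obstacle is isolating and justifying the single input coming from conilpotency: that an arbitrary comodule over~$C$ is the union of its coaugmentation filtration, equivalently that every nonzero $C$\+comodule has nonzero $H^0(C,-)$. This is what makes the inductive descent terminate and is the only place the hypothesis $\Nilp C=C$ is used; I expect to establish it either directly from the definition of conilpotency by a coassociativity argument, or by reducing to finite-dimensional subcomodules and passing to the dual finite-dimensional local algebras with nilpotent radical.
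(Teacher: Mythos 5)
Your proposal is correct and follows essentially the same route as the paper: part~(a) is the standard cofree-comodule adjunction via the counit and the coaction, and part~(b) rests on the same key input, namely that conilpotency of $C$ makes the coaugmentation filtration of any comodule exhaustive, followed by an induction along that filtration. The only cosmetic difference is that you run the induction directly on $M$ (showing $f$ is injective on each $N_nM$), whereas the paper passes to the kernel $L=\ker f$ and shows that $H^0(C,L)=0$ forces $L=0$; the two arguments are interchangeable.
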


\begin{proof}
 Part~(a) does not depend on the conilpotency assumption on~$C$.
 To a morphism of left $C$\+comodules $f\:M\rarrow C\ot_k U$, one
assigns the composition $M\rarrow C\ot_kU\rarrow U$ of
the morphism~$f$ with the morphism $C\ot_kU\rarrow U$ induced
by the counit map $C\rarrow k$.
 To a morphism of $k$\+vector spaces $g\:M\rarrow U$, one
assigns the composition $M\rarrow C\ot_kM\rarrow C\ot_kU$
of the coaction map $M\rarrow C\ot_kM$ with the map
$C\ot_k g\:C\ot_kM\rarrow C\ot_kU$.

 In part~(b), one has $H^0(C,M)\subset M$ and $H^0(C\;C\ot_kU)
\subset C\ot_k U$, so injectivity of the map~$f$ clearly implies
injectivity of $H^0(C,f)$.
 Conversely, let $L$ denote the kernel of the morphism~$f$; so
$L$ is also a left $C$\+comodule.
 If the map $H^0(C,f)$ is injective, then $H^0(C,L)=0$.
 We have to show that this implies $L=0$.

 Indeed, let $N_mL=\ker(L\to C/N_mC\ot_kL)$ denote the filtration
on $L$ induced by the coaugmentation filtration $N$ on~$C$.
 One has $L=\bigcup_{m=0}^\infty N_mL$.
 The subspace $H^0(C,L)=N_0L\subset L$ is the maximal subcomodule
of $L$ with the trivial coaction of~$C$.
 The subspaces $N_mL\subset L$ are also subcomodules of $L$, and
the coaction of $C$ in the quotient comodules $N_mL/N_{m-1}L$ is
trivial.
 Hence $N_0L=0$ implies by induction $N_mL=0$ for all $m\ge0$
and $L=0$.
\end{proof}

 Lemma~\ref{cogenerators-comodule} purports to explain why
the vector space $U=H^0(C,M)$ is called the \emph{space of
cogenerators} of a left $C$\+comodule~$M$.
 Choosing an arbitrary $k$\+linear map $M\rarrow U$ equal to
the identity isomorphism in restriction to $H^0(C,M)\subset M$
and applying Lemma~\ref{cogenerators-comodule}(a), one obtains
a left $C$\+comodule morphism $M\rarrow C\ot_kU$, which is
injective according to Lemma~\ref{cogenerators-comodule}(b).
 This is the minimal possible way to embed $M$ into a cofree
left $C$\+comodule, in the sense made precise by
Lemma~\ref{cogenerators-comodule}.

 As above, let $F=\bigoplus_{n=0}^\infty U^{\ot n}$ be the tensor
coalgebra of a $k$\+vector space~$U$.

\begin{lem} \label{cogenerators-coalgebra}
 Let $C$ be a conilpotent coalgebra over~$k$.  Then \par
\textup{(a)} morphisms of (coaugmented) coalgebras $C\rarrow F$
correspond bijectively to morphisms of $k$\+vector spaces
$C_+\rarrow U$; \par
\textup{(b)} a morphism of coalgebras $f\:C\rarrow F$ is injective
if and only if the morphism $H^1(f)\: H^1(C)\rarrow H^1(F)=U$
is injective.
\end{lem}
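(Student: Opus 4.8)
The plan is to read part~(a) as the universal property (cofreeness) of the tensor coalgebra $F=\bigoplus_{n}U^{\ot n}$, and then to derive part~(b) by passing to the associated graded coalgebras for the coaugmentation filtration, thereby reducing everything to the degree-one statement.

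For part~(a), I would attach to a coalgebra morphism $f\:C\rarrow F$ the linear map $C\rarrow U$ obtained by composing $f$ with the projection of $F$ onto its component $U^{\ot1}=U$; as $f$ preserves the coaugmentations and this projection kills $U^{\ot0}=k$, the composition descends to a map $C_+\rarrow U$. In the reverse direction, given $g\:C_+\rarrow U$, I would extend it by zero on the coaugmentation to $\bar g\:C\rarrow U$ and set $f=\sum_{n\ge0}f_n$, where $f_n\:C\rarrow U^{\ot n}$ is the iterated comultiplication $C\rarrow C^{\ot n}$ followed by $\bar g^{\ot n}$. Here the conilpotency of $C$ is essential: it guarantees that $f_n(c)=0$ for $n\gg0$ for every fixed $c$, so that $f(c)$ lands in the direct sum $F$ and not merely in the direct product. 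A routine verification using the deconcatenation comultiplication on $F$ shows that $f$ is a morphism of coalgebras, and the two assignments are manifestly mutually inverse; this is the cofreeness of the tensor coalgebra recalled in~\cite[Section~2]{PV}.

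For part~(b), one implication is immediate. An element of $H^1(C)=\ker(\bar\Delta\:C_+\rarrow C_+\ot C_+)$ is primitive, so the higher components $f_n$ with $n\ge2$ annihilate it, and the restriction of $f$ to $H^1(C)$ coincides with $H^1(f)\:H^1(C)\rarrow U=H^1(F)$; hence injectivity of $f$ forces injectivity of $H^1(f)$. For the converse I would bring in the coaugmentation filtration $N_mC=\ker(C\rarrow C_+^{\ot m+1})$ (as in the proof of Proposition~\ref{bar-cobar-adjunction}(b)), which is exhaustive by conilpotency and whose counterpart on $F$ is simply $N_mF=\bigoplus_{n\le m}U^{\ot n}$. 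Any coalgebra morphism respects this filtration, so $f$ induces $\gr^N f\:\gr^N C\rarrow\gr^N F=F$; and since $N$ is exhaustive, it is enough to show that $\gr^N f$ is injective.

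The key point---and the only genuinely nonformal step---is that $\gr^N C$ is cogenerated in degree one: the iterated reduced comultiplication identifies $\gr^N_1 C$ with $H^1(C)$ and embeds each $\gr^N_m C$ into $(\gr^N_1 C)^{\ot m}=H^1(C)^{\ot m}$. This I would extract directly from the definition $N_mC_+=\ker\bar\Delta^{(m)}$ together with the inclusion $\bar\Delta^{(m-1)}(N_mC_+)\subseteq(N_1C_+)^{\ot m}$, the latter being a short intersection-of-tensor-factors computation of the same flavor as in Lemma~\ref{cogenerators-comodule}(b). In degree one, $\gr^N f$ is exactly $H^1(f)$. Since $\gr^N f$ commutes with the iterated comultiplications, and for $F=\bigoplus_n U^{\ot n}$ the latter is the identity of $U^{\ot m}$, one obtains on each graded piece a factorization of $\gr^N_m f$ as the injective embedding $\gr^N_m C\hookrightarrow H^1(C)^{\ot m}$ followed by $(H^1 f)^{\ot m}$. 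As $(H^1 f)^{\ot m}$ is injective whenever $H^1(f)$ is (we are over a field), each $\gr^N_m f$ is injective, whence $\gr^N f$ and therefore $f$ are injective. I expect the degree-one cogeneration of $\gr^N C$ to be the crux; the remainder is the formal filtered-to-graded descent, parallel to the comodule argument of Lemma~\ref{cogenerators-comodule}(b).
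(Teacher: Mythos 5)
Your proposal is correct and follows essentially the same route as the paper: part~(a) is the same cofreeness construction via iterated comultiplication composed with $g^{\ot n}$ (with conilpotency ensuring the image lands in the direct sum), and for part~(b) the paper's element-wise argument --- taking $c\in N_nC\setminus N_{n-1}C$, noting its image in $C_+^{\ot n}$ is a nonzero element of $(N_1C_+)^{\ot n}=H^1(C)^{\ot n}$, and applying injectivity of $(H^1f)^{\ot n}$ --- is exactly your associated-graded reduction stated pointwise. You have also correctly identified the degree-one cogeneration of $\gr^NC$ as the crux, which is precisely the step the paper isolates.
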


\begin{proof}
 In part~(a) already, it is important that the coalgebra $C$ is
conilpotent.
 It is not difficult to see that any morphism of conilpotent
coalgebras preserves the coaugmentations.
 To a morphism of coalgebras $f\:C\rarrow F$, one assigns
the composition $C_+\rarrow F_+\rarrow U$ of the morphism~$f$ with
the projection $F_+=\bigoplus_{n=1}^\infty U^{\ot n}\rarrow U$.
 To a morphism of $k$\+vector spaces $g\:C_+\rarrow U$, one
assigns the morphism of coalgebras $C\rarrow F$ with
the components $C_+\rarrow U^{\ot n}$ constructed as
the compositions $C_+\rarrow C_+^{\ot n}\rarrow U^{\ot n}$ of
the iterated comultiplication map $C_+\rarrow C_+^{\ot n}$ with
the map $g^{\ot n}\:C_+^{\ot n}\rarrow U^{\ot n}$.
 The assumption of conilpotency of the coalgebra $C$ guarantees
that the map with such components lands inside
$\bigoplus_{n=1}^\infty U^{\ot n}\subset\prod_{n=1}^\infty U^{\ot n}$.

 In part~(b), one has $H^1(C)\subset C_+$ and $H^1(F)\subset F_+$,
so injectivity of the map~$f$ implies injectivity of $H^1(f)$.
 Conversely, let $c\in C$ be an element belonging to $N_nC$ but
not to $N_{n-1}C$, where $n\ge1$.
 Then the image of~$c$ in $C_+^{\ot n}$ is a nonzero element of
the subspace $(N_1C_+)^{\ot n}\subset C_+^{\ot n}$, where
$N_mC_+$ denotes the filtration on the quotient coalgebra without
counit $C_+=C/k$ induced by the filtration $N$ on the coalgebra~$C$.
 In order to show that $f(c)\ne0$ in $F$, it suffices to check that
the map
$$
 (N_1f_+)^{\ot n}\:(N_1C_+)^{\ot n}\rarrow (N_1F_+)^{\ot n}
$$
is injective.
 It remains to recall that $N_1C_+=H^1(C)$ and $N_1F_+=H^1(F)$.
\end{proof}

 Lemma~\ref{cogenerators-coalgebra}(a) purports to explain why
the tensor coalgebra $F$ is called the \emph{cofree conilpotent
coalgebra} cogenerated by a vector space~$U$.
 Furthermore, Lemma~\ref{cogenerators-coalgebra} explains
why the vector space $U=H^1(C)$ is called the \emph{space of
cogenerators} of a conilpotent coalgebra~$C$.
 Choosing an arbitrary $k$\+linear map $C_+\rarrow U$ equal to
the identity isomorphism in restriction to $H^1(C)\subset C_+$
and applying Lemma~\ref{cogenerators-coalgebra}(a), one obtains
a coalgebra morphism $C\rarrow F$ from $C$ into the tensor
coalgebra $F=\bigoplus_{n=0}^\infty U^{\ot n}$, which is injective
by Lemma~\ref{cogenerators-coalgebra}(b).
 This is the minimal possible way to embed $C$ into a tensor
(cofree conilpotent) coalgebra.

 Let $f\:C\rarrow D$ be an injective morphism of conilpotent
coalgebras.
 The quotient space $D/C$ has a natural structure of bicomodule
over the coalgebra $D$, that is a left comodule over the coalgebra
$D\ot_k D^\rop$, where $D^\rop$ denotes the opposite coalgebra to~$D$.
 In other words, the vector space $D/C$ is endowed with natural
left and right coactions $D/C\rarrow D\ot_kD/C$ and $D/C\rarrow
D/C\ot_k D$ of the coalgebra $D$, which commute with each other,
so they can be united in a \emph{bicoaction map}
$$
 D/C\lrarrow D\ot_kD/C\ot_kD.
$$
 Indeed, $D$ is naturally a bicomodule over itself, and the coalgebra
morphism~$f$ endows $C$ with a structure of bicomodule over $D$,
so $D/C$ is the cokernel of a bicomodule morphism $f\:C\rarrow D$.

 We will call the vector space of cogenerators of the bicomodule $D/C$
over $D$ 
$$
 R=R(C,D)=H^0(D\ot_kD^\rop\;D/C)
$$
the \emph{space of defining corelations} of the subcoalgebra $C$ in~$D$.
 This is the dual point of view to constructing the defining relations
of a quotient algebra $A$ of an algebra $B$ as the generators of
the kernel ideal of the morphism $B\rarrow A$.
 The next theorem is the coalgebra version of~\cite[proof
of Proposition~5.2 of Chapter~1]{PP}.

\begin{thm}
 For any injective morphism of conilpotent coalgebras $f\:C\rarrow D$,
there is a natural exact sequence
$$
 0\lrarrow H^1(C)\lrarrow H^1(D)\lrarrow R(C,D)\lrarrow H^2(C)
 \lrarrow H^2(D).
$$
\end{thm}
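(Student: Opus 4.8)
The plan is to obtain the five\+term sequence as the beginning of the long exact cohomology sequence of a short exact sequence of reduced cobar complexes. Since $f\:C\rarrow D$ is injective and tensor products over a field preserve monomorphisms, the induced map of complexes $\Cob^\bu(C)\rarrow\Cob^\bu(D)$ is injective in every degree; let $Q^\bu$ denote its cokernel, so that $Q^n=D_+^{\ot n}/C_+^{\ot n}$ carries the differential induced by the cobar differential of $D$. First I would write out the long exact sequence attached to $0\rarrow\Cob^\bu(C)\rarrow\Cob^\bu(D)\rarrow Q^\bu\rarrow0$. Because the degree\+zero term $Q^0=k/k$ vanishes, one has $H^0(Q^\bu)=0$, so the connecting homomorphism landing in $H^1(C)$ is zero and the sequence truncates to
$$
 0\rarrow H^1(C)\rarrow H^1(D)\rarrow H^1(Q^\bu)\rarrow H^2(C)\rarrow H^2(D).
$$

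The crux is the natural identification $H^1(Q^\bu)\simeq R(C,D)$. As $Q^0=0$, we have $H^1(Q^\bu)=\ker\bigl(Q\rarrow D_+^{\ot2}/C_+^{\ot2}\bigr)$, the map being induced by the reduced comultiplication $\bar\Delta_D\:D_+\rarrow D_+^{\ot2}$; thus a class $\bar x\in Q=D/C$ lies in $H^1(Q^\bu)$ exactly when $\bar\Delta_D(x)\in C_+^{\ot2}$ for one (equivalently, any) lift $x\in D_+$. On the other hand, I would check that the reduced left and right $D$\+coactions on the bicomodule $D/C$ are computed from $\bar\Delta_D(x)$ by projecting, respectively, the second or the first tensor factor along $D_+\rarrow D_+/C_+$. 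Consequently $\bar x\in R(C,D)=H^0(D\ot_kD^\rop\;D/C)$ precisely when $\bar\Delta_D(x)$ dies in both $D_+\ot_k(D/C)$ and $(D/C)\ot_kD_+$, i.e.\ when $\bar\Delta_D(x)\in(D_+\ot_kC_+)\cap(C_+\ot_kD_+)$. The two conditions agree thanks to the elementary identity $(D_+\ot_kC_+)\cap(C_+\ot_kD_+)=C_+\ot_kC_+$, valid over a field for the subspace $C_+\subset D_+$; this yields $H^1(Q^\bu)\simeq R(C,D)$.

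Substituting this isomorphism into the truncated sequence produces the statement, and naturality is automatic since the whole construction is functorial in the injection $f$. The individual maps then receive transparent descriptions that I would record for the reader: $H^1(C)\rarrow H^1(D)$ is the inclusion of primitive elements; $H^1(D)\rarrow R(C,D)$ sends a primitive $d$ to its class $\bar d$, which is bi\+primitive because $\bar\Delta_D(d)=0$; and the connecting map $R(C,D)\rarrow H^2(C)$ sends $\bar x$ to the cohomology class of $\bar\Delta_D(x)\in C_+^{\ot2}$, a cocycle of $\Cob^\bu(C)$ since it is the cobar differential of $x$ in $\Cob^\bu(D)$ and happens to land in the subcomplex $\Cob^\bu(C)$. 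I expect the genuine obstacle to be the middle paragraph: matching the purely cohomological vanishing condition $\bar\Delta_D(x)\in C_+^{\ot2}$ with the corelation condition defining $R(C,D)$ requires correctly identifying the bicomodule coaction of $D/C$ in terms of $\bar\Delta_D$ and then applying the tensor\+intersection identity; once this is in place, the rest is the formal machinery of the long exact sequence.
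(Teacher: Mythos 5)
Your proposal is correct and follows essentially the same route as the paper: the short exact sequence of (reduced) cobar complexes, its long exact cohomology sequence, and the identification of the middle term with $R(C,D)$ via the characterization of $R$ as the joint kernel of the reduced left and right coactions on $D/C$. Your intersection identity $(D_+\ot_kC_+)\cap(C_+\ot_kD_+)=C_+\ot_kC_+$ is exactly the paper's statement that $D_+^{\ot2}/C_+^{\ot2}\rarrow D_+\ot_kD/C\oplus D/C\ot_kD_+$ is injective, just phrased on the level of kernels instead of quotients.
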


\begin{proof}
 Consider the short exact sequence of complexes
$$ \dgARROWLENGTH=1.5em
\begin{diagram}
\node{0}\arrow{e}\node{C_+}\arrow{e}\arrow{s}\node{D_+}
\arrow{e}\arrow{s}\node{D/C}\arrow{e}\arrow{s}\node{0} \\
\node{0}\arrow{e}\node{C_+^{\ot2}}\arrow{e}\arrow{s}
\node{D_+^{\ot2}}\arrow{e}\arrow{s}
\node{D_+^{\ot2}/C_+^{\ot2}}\arrow{e}\arrow{s}
\node{0} \\
\node{0}\arrow{e}\node{C_+^{\ot3}}\arrow{e}
\node{D_+^{\ot3}}\arrow{e}
\node{D_+^{\ot3}/C_+^{\ot3}}\arrow{e}
\node{0\makebox[0pt][l]{,}}
\end{diagram}
$$
two of which are just the initial fragments of the cobar-complexes
of $C$ and~$D$.
 The related long exact sequence of cohomology is the desired one,
with the only difference that the kernel of the map $D/C\rarrow
D_+^{\ot 2}/C_+^{\ot 2}$ stands in place of the vector space~$R$.
 To see that these are the same, one first notices that $R$ is
the maximal subbicomodule of $D/C$ where both the left and
the right coactions of $D$ are trivial.
 This can be alternatively constructed as the kernel of the map
$D/C\rarrow D_+\ot_k D/C\oplus D/C\ot_k D_+$ whose components
are the left and the right coaction maps.
 Finally, the natural map $D_+^{\ot 2}/C_+^{\ot 2}\rarrow
D_+\ot_k D/C\oplus D/C\ot_k D_+$ is injective.
\end{proof}

 Assume that the map between the spaces of cogenerators
$H^1(C)\rarrow H^1(D)$ of the coalgebras $C$ and $D$ is an isomorphism.
 Then the space of corelations $R(C,D)$ is identified with the kernel of
the map $H^2(C)\rarrow H^2(D)$.
 In particular, when $D=F$ is a cofree conilpotent coalgebra, we have
$H^2(F)=0$ and $R(C,F)=H^2(C)$.
 This explains why the vector space $H^2(C)$ is called the \emph{space
of defining corelations} of a conilpotent coalgebra $C$ (in a cofree
conilpotent coalgebra).

 The following rule allows to construct a natural filtration $N$ on
the space of corelations $R(C,D)$.
 The components of the coaugmentation filtration $N_mC\subset C$
are subcoalgebras in~$C$.
 Set $N_mR(C,D)=R(N_mC,N_mD)$.
 By assumptions, one has $N_0C=N_0D$ and $N_1C=N_1D$, hence
$N_mR(C,D)=0$ for $m\le1$.
 One has $N_mC=C\cap N_mD$, so the induced morphism $N_mD/N_mC\rarrow
D/C$ is injective, and it follows that the natural morphism
$N_mR(C,D)\rarrow R(C,D)$ is injective, too.
 One has $D/C=\bigcup_m N_mD/N_mC$, hence $R(C,D)=\bigcup_m N_mR(C,D)$.

 We set $N_mH^2(C)=N_mR(C,F)$.
 This construction of a filtration does not depend on the choice
of an embedding of the coalgebra $C$ into a cofree conilpotent
coalgebra $F$ with $H^1(F)=H^1(C)$, because all such embeddings only
differ by an automorphism of the coalgebra~$F$.
 A subcoalgebra $C\subset D$ is said to be \emph{defined by
nonhomogeneous quadratic corelations in $D$} if $R(C,D)=N_2R(C,D)$.
 A coalgebra $C$ is \emph{defined by nonhomogeneous quadratic
corelations} if it is defined by nonhomogeneous quadratic corelations
as a subcoalgebra in~$F$.

\begin{prop}
 For any conilpotent coalgebra $C$, the subspace $N_2H^2(C)\subset
H^2(C)$ coincides with the image of the multiplication
map $H^1(C)\ot_k H^1(C)\rarrow H^2(C)$.
\end{prop}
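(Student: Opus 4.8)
The plan is to make the chosen embedding of $C$ into a cofree conilpotent coalgebra explicit in low internal degrees and read the answer off directly from the definition $N_2H^2(C)=R(N_2C,N_2F)$. Fix an embedding $C\rarrow F$ into the tensor coalgebra $F=\bigoplus_n U^{\ot n}$ on $U=H^1(C)$ furnished by Lemma~\ref{cogenerators-coalgebra}, so that $H^1(C)\rarrow H^1(F)$ is an isomorphism and (for any injective coalgebra morphism) $N_mC=C\cap N_mF$. Since $N_1C=N_1F=k\oplus U$, the induced map $\gr^N_2C\rarrow\gr^N_2F=U^{\ot2}$ is injective, and the quotient $N_2F/N_2C$ is concentrated in internal degree~$2$ and identified with $U^{\ot2}/\gr^N_2C$, where $\gr^N_2C=N_2C/N_1C$.

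First I would compute the $N_2F$\+bicomodule structure on this quotient. For a degree-two tensor $w\in U^{\ot2}\subset N_2F$ the deconcatenation coproduct is $\Delta(w)=1\ot w+w'+w\ot1$, the middle term $w'$ being $w$ itself regarded in $U\ot U\subset(N_2F)_+\ot(N_2F)_+$. Projecting either outer tensor factor into the quotient annihilates $w'$, because $U=N_1C$ maps to zero in $N_2F/N_2C$; hence both reduced coactions $N_2F/N_2C\rarrow(N_2F)_+\ot_k N_2F/N_2C$ and $N_2F/N_2C\rarrow N_2F/N_2C\ot_k(N_2F)_+$ vanish for degree reasons. Thus the entire quotient is a trivial bicomodule, so it is its own maximal trivial subbicomodule, giving $R(N_2C,N_2F)=N_2F/N_2C=U^{\ot2}/\gr^N_2C$.

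Next I would trace the canonical isomorphism $R(C,F)\simeq H^2(C)$ and restrict it to $N_2H^2(C)=N_2R(C,F)=R(N_2C,N_2F)$, included via the injection $N_2F/N_2C\rarrow F/C$. This isomorphism is the connecting map of the exact sequence of the preceding theorem: as $H^1(C)\rarrow H^1(F)$ is an isomorphism and $H^2(F)=0$, the map $R(C,F)\rarrow H^2(C)$ is bijective and sends a corelation $\bar w$, lifted to $w\in F_+$, to the class $[\Delta_+w]\in H^2(C)$. For an element of $R(N_2C,N_2F)$ represented by $w\in U^{\ot2}$, the reduced coproduct $\Delta_+w$ equals $w$ viewed in $U\ot U\subset C_+\ot_k C_+$ (here $U=H^1(C)$ consists of primitive cocycles), and $[\Delta_+w]$ is by definition the image of $w$ under the multiplication map $H^1(C)\ot_k H^1(C)\rarrow H^2(C)$. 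As $w$ ranges over $U^{\ot2}$, this class ranges over the full image of the multiplication map, so the isomorphism carries $N_2H^2(C)$ precisely onto that image.

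The step I expect to demand the most care is the final identification \emph{as subspaces} of $H^2(C)$, rather than merely up to abstract isomorphism: this rests entirely on the explicit connecting-homomorphism formula $\bar w\mapsto[\Delta_+w]$ and on recognizing that $\Delta_+$ applied to a degree-two tensor returns its middle deconcatenation term, whose cohomology class is the product of the two degree-one factors. The supporting facts---that $N_mC=C\cap N_mF$ for the chosen embedding (so that $\gr^N_2C\hookrightarrow U^{\ot2}$ and $N_2F/N_2C\simeq U^{\ot2}/\gr^N_2C$), and the degree count forcing both reduced coactions on $N_2F/N_2C$ to vanish---are routine, but they must be assembled in the right order for the two descriptions of the subspace to match on the nose.
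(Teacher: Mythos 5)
Your argument is correct and follows essentially the same route as the paper's: both reduce the claim to the exact sequence of the preceding theorem applied to the embedding $N_2C\rarrow N_2F$ together with the observation that the reduced comultiplication carries the degree-two part of $N_2F_+$ onto $(N_1F_+)^{\ot2}=H^1(C)\ot_k H^1(C)$. The only difference is one of bookkeeping: you compute $R(N_2C,N_2F)$ on the quotient side (checking that all of $N_2F/N_2C$ is a trivial bicomodule) and then transport it into $H^2(C)$ via the explicit connecting homomorphism, whereas the paper computes the same space directly inside the cobar complexes as the cokernel of $N_2C_+\rarrow L$ with $L=(N_1C_+)^{\ot2}$.
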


\begin{proof}
 The vector space $N_2H^2(C)=N_2R(C,F)$ is computed as the cokernel of
the differential $N_2C_+\rarrow L$, where $L$ is the subspace of all
elements in $(N_2C_+)^{\ot2}$ whose images in $(N_2F_+)^{\ot 2}$ are
coboundaries, i.~e., come from elements of $N_2F_+$.
 The image of the differential $N_2F_+\rarrow (N_2F_+)^{\ot2}$ is
equal to $(N_1F_+)^{\ot 2}\subset (N_2F_+)^{\ot 2}$.
 Hence the subspace $L\subset (N_2C_+)^{\ot 2}$ coincides with
$(N_1C_+)^{\ot2}$.
 It remains to recall that $H^1(C)=N_1C_+$.
\end{proof}

\begin{cor}
 A conilpotent coalgebra $C$ is defined by nonhomogeneous quadratic
corelations if and only if the multiplication map
$H^1(C)\ot_k H^1(C)\rarrow H^2(C)$ is surjective.  \qed
\end{cor}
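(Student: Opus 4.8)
The plan is to recognize that this corollary is a formal consequence of the preceding proposition and the definitions, so the proof consists in unwinding the terminology and making one substitution. First I would recall that, by definition, a conilpotent coalgebra $C$ is defined by nonhomogeneous quadratic corelations precisely when it is so defined as a subcoalgebra in the cofree conilpotent coalgebra $F$ with $H^1(F)=H^1(C)$, which in turn means the equality $R(C,F)=N_2R(C,F)$. Since $F$ is cofree we have $H^2(F)=0$, and the map $H^1(C)\rarrow H^1(F)$ is an isomorphism by the construction of $F$; feeding this into the long exact sequence of the preceding theorem identifies $R(C,F)$ with $\ker(H^2(C)\rarrow H^2(F))=H^2(C)$. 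By the very definition one also has $N_2R(C,F)=N_2H^2(C)$. Thus the condition ``$C$ is defined by nonhomogeneous quadratic corelations'' is equivalent to the equality $N_2H^2(C)=H^2(C)$, i.e., to the subspace $N_2H^2(C)$ exhausting the whole of $H^2(C)$.

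Next I would invoke the preceding proposition, which computes $N_2H^2(C)$ as the image of the multiplication map $H^1(C)\ot_k H^1(C)\rarrow H^2(C)$. Substituting this identification into the equivalence just obtained, the condition $N_2H^2(C)=H^2(C)$ becomes exactly the assertion that this multiplication map is surjective. This is the claim of the corollary, so both implications follow simultaneously from the chain of equivalences.

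There is no genuine obstacle to overcome here, as the entire substance has already been established: the identification $R(C,F)=H^2(C)$ is furnished by the long exact sequence together with the vanishing $H^2(F)=0$, and the computation of $N_2H^2(C)$ as the image of the cup-product is the content of the preceding proposition. The only point requiring a moment's care is the well-definedness of the notion ``defined by nonhomogeneous quadratic corelations'' for an abstract coalgebra $C$, namely its independence of the chosen embedding $C\rarrow F$; but this was already addressed, since any two such embeddings differ by an automorphism of $F$, so the induced filtration $N$ on $H^2(C)$, and hence the subspace $N_2H^2(C)$, does not depend on the choice.
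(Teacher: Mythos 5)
Your argument is correct and is exactly the reasoning the paper intends by marking the corollary with \qed: unwind the definition to the equality $R(C,F)=N_2R(C,F)$, identify $R(C,F)$ with $H^2(C)$ via the long exact sequence and the vanishing $H^2(F)=0$ (using that $H^1(C)\rarrow H^1(F)$ is an isomorphism), and then apply the preceding proposition identifying $N_2H^2(C)$ with the image of the multiplication map. Your remark on independence of the chosen embedding also matches the paper's own justification, so nothing is missing.
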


 Let $D=\bigoplus_{n=0}^\infty D_n$, \ $D_0=k$ be a positively graded
coalgebra.
 Then the cohomology spaces $H^i(D)$ are endowed with the internal
grading $H^i(D)=\bigoplus_{j=i}^\infty H^{i,j}(D)$ induced by the grading
of~$D$.
 Assume that $D$ is cogenerated by $D_1$, i.~e., the iterated
comultiplication maps $D_n\rarrow D_1^{\ot n}$ are injective.
 Equivalently, this means that $H^1(D)=H^{1,1}(D)$.
 Then the above-defined filtration $N$ on $H^2(D)$ is associated with
the internal grading, that is $N_mH^2(D)=\bigoplus_{j=2}^mH^{2,j}(D)$.
 In particular, one has $N_2H^2(D)=H^2(D)$ if and only if $H^{2,j}(D)=0$
for $j>2$, that is, if and only if the graded coalgebra $D$ is quadratic.

 A conilpotent coalgebra $C$ is said to be \emph{defined by
self-consistent nonhomogeneous quadratic corelations} if the graded
coalgebra $\gr^NC$ is quadratic.
 According to the main theorem of~\cite{PV}
(see~\cite[Theorem~4.2]{Pbogom} for the relevant formulation), any 
conilpotent coalgebra $C$ with a Koszul cohomology algebra $H^*(C)$
is defined by self-consistent nonhomogeneous quadratic corelations
with Koszul quadratic principal part~\eqref{homog2}.
 Moreover, the seemingly weaker conditions on the algebra $H^*(C)$
formulated in the above Theorem~\ref{with-vishik-main-theorem} are
sufficient in lieu of the Koszulity condition.

 The proof of this result in~\cite{PV} is based on the spectral
sequence converging from $H^*(\gr^NC)$ to $H^*(C)$.
 The above filtration $N$ on $H^2(C)$ is a part of the filtration
$N$ on $H^*(C)$ induced by the filtration $N$ on the cobar-complex
$\Cob^\bu(C)$ induced by the natural filtration $N$ on
a conilpotent coalgebra~$C$.

 The condition of surjectivity of the map $\q H^*(C)\rarrow H^*(C)$
in degree~$2$ in Theorem~\ref{with-vishik-main-theorem} means
that the coalgebra $C$ is defined by nonhomogeneous quadratic
corelations, as we have explained.
 The condition of injectivity of this map in degree~$3$ means,
basically, that syzygies of degree~$3$ between the homogeneous
quadratic parts~\eqref{homog2} of nonhomogeneous quadratic
corelations in $C$ do not lead to non-self-consistencies (as it
happens in the example with the relation~\eqref{nonselfconsistent}).
 In this sense, Theorem~\ref{with-vishik-main-theorem} can be
viewed as an analogue for relations of the type~\eqref{234} of
the Poincar\'e--Birkhoff--Witt theorem for relations
of the type~\eqref{210} \cite{Pcurv,PP} (cf.~\cite{Lee}).

 Conversely, if a conilpotent coalgebra $C$ is defined by
self-consistent nonhomogeneous quadratic corelations with Koszul
quadratic principal parts, that is the graded coalgebra $\gr^NC$
is Koszul, then the algebra $H^*(C)$ is isomorphic to $H^*(\gr^NC)$,
as it easily follows from the same spectral sequence, and
consequently Koszul.

\Section{Koszulity Does Not Imply Formality}
\label{koszul-doesnt-imply-formal}

 Examples of quasi-formal DG\+algebras that are not formal are
well known in the conventional (commutative) rational homotopy
theory~\cite[Examples~8.13]{HS}.
 In this section we, working in the noncommutative homotopy
theory of Section~\ref{noncommutative-homotopy-secn}, over a field
of prime characteristic, present a series of counterexamples of
quasi-formal, nonformal DG\+algebras \emph{with Koszul cohomology
algebras}.
 We also present a family of commutative DG\+algebras with
the similar properties defined over an arbitrary field (of zero or
prime characteristic).

 Recall that a DG\+algebra $A^\bu$ is called \emph{formal} if it
can be connected by a chain of quasi-isomorphisms of DG\+algebras
with its cohomology algebra $H^*(A^\bu)$, viewed as a DG\+algebra
with zero differential (cf.\ Section~\ref{koszul-implies-quasiform}).
 The following lemma shows that there is no ambiguity in this
definition as applied to DG\+algebras with the cohomology algebras
concentrated in the positive cohomological degrees.
 We refer to Section~\ref{noncommutative-homotopy-secn} for a short
discussion of positively cohomologically graded DG\+algebras.

\begin{lem} \label{augmented-quasi}
 Let $A^\bu$ and $B^\bu$ be two augmented DG\+algebras with
the cohomology algebras concentrated in the positive cohomological
degrees, connected by a chain of quasi-isomorphisms of
DG\+algebras over a field~$k$.
 Then there exists a positively cohomologically graded DG\+algebra
$P^\bu$ together with two quasi-isomorphisms of augmented
DG\+algebras $P^\bu\rarrow A^\bu$ and $P^\bu\rarrow B^\bu$.
 In particular, the chain of quasi-isomorphisms between
the DG\+algebras $A^\bu$ and $B^\bu$ can be made to consist of
augmented quasi-isomorphisms of augmented DG\+algebras.
\end{lem}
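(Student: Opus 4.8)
The plan is to construct $P^\bu$ as a single cofibrant resolution of $A^\bu$ and then transport it along the given chain, the whole argument resting on one elementary observation: morphisms \emph{out of} a positively cohomologically graded DG\+algebra are automatically augmented. First I would apply Lemma~\ref{cofibrant-positively-graded} to $A^\bu$ (whose cohomology is concentrated in positive cohomological degrees by hypothesis) to obtain a positively cohomologically graded DG\+algebra $P^\bu$ together with a quasi-isomorphism of augmented DG\+algebras $p\:P^\bu\rarrow A^\bu$. Since the proof of that lemma produces $P^\bu$ as a cofibrant resolution --- concretely, a quasi-free DG\+algebra built from $k$ by iteratively adjoining free generators in positive cohomological degrees --- the object $P^\bu$ is cofibrant in the conventional model structure on DG\+algebras over~$k$ (the same one used in the proof of Lemma~\ref{cofibrant-positively-graded}; cf.~\cite{Hin1,Jar,Pkoszul}). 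I would then record the key point: as $P^\bu$ is positively cohomologically graded, one has $P^0=k\cdot 1$ and $P^i=0$ for $i<0$, so $P^\bu$ admits a \emph{unique} augmentation; hence the composite of \emph{any} DG\+algebra morphism $P^\bu\rarrow E^\bu$ with the augmentation of an augmented DG\+algebra $E^\bu$ is forced to be that unique augmentation of $P^\bu$. In other words, every morphism of DG\+algebras from $P^\bu$ into an augmented DG\+algebra commutes with the augmentations for free.

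Next I would slide $p$ along the chain connecting $A^\bu$ and $B^\bu$. In the model category of DG\+algebras every object is fibrant (the fibrations are the surjections, and every algebra surjects onto the terminal object), so I may process the chain one link at a time, starting from $p\:P^\bu\rarrow A^\bu$: a quasi-isomorphism pointing away from the current target is composed with directly, while a quasi-isomorphism $w$ pointing \emph{toward} the current target is handled by lifting, using that for the cofibrant source $P^\bu$ and a weak equivalence $w$ between fibrant objects the induced map $[P^\bu,C^\bu]\rarrow[P^\bu,(\text{current target})]$ on homotopy classes is a bijection; the two-out-of-three property then shows each lift is again a quasi-isomorphism. More conceptually, the chain exhibits an isomorphism between the classes of $A^\bu$ and $B^\bu$ in $\mathrm{Ho}(\text{DG-algebras})$, and precomposing with $[p]$ yields an isomorphism out of the class of $P^\bu$; since $P^\bu$ is cofibrant and $B^\bu$ is fibrant, this isomorphism is represented by an honest morphism $q\:P^\bu\rarrow B^\bu$, which is a quasi-isomorphism because the weak equivalences of a model category are saturated (they are exactly the maps inverted in the homotopy category).

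Finally I would assemble the conclusion. By the observation of the first paragraph, the morphism $q$ (like $p$) is automatically a morphism of augmented DG\+algebras, so $p\:P^\bu\rarrow A^\bu$ and $q\:P^\bu\rarrow B^\bu$ are the two required augmented quasi-isomorphisms, and together they form the two-step chain of augmented quasi-isomorphisms proving the ``in particular'' clause. The one place demanding care --- and the only place where the positive-grading hypothesis is genuinely used --- is precisely the compatibility with augmentations: a priori the given chain lives in the category of \emph{plain} DG\+algebras, whose intermediate terms need not be augmentable at all, so one cannot simply work inside augmented DG\+algebras from the outset. The uniqueness of the augmentation on $P^\bu$ is exactly what repairs this, converting the non-augmented chain into augmented morphisms at no cost. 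The remaining ingredients --- cofibrancy of the quasi-free resolution in the (non-augmented) model structure, fibrancy of every object, and saturation of the weak equivalences --- are standard facts about the model category of DG\+algebras.
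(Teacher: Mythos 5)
Your proposal is correct and follows essentially the same route as the paper, whose entire proof is the one-line instruction to take a positively cohomologically graded cofibrant model of $A^\bu$ (via Lemma~\ref{cofibrant-positively-graded}) in the role of $P^\bu$. You have simply spelled out the two implicit ingredients --- transporting the map along the zigzag using cofibrancy of $P^\bu$ and fibrancy of all objects, and the automatic compatibility with augmentations coming from the uniqueness of the augmentation on a positively cohomologically graded DG\+algebra --- both of which are exactly what the paper's terse argument relies on.
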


\begin{proof}
 It suffices to choose a positively cohomologically graded
cofibrant model of either DG\+algebra $A^\bu$ or $B^\bu$ in
the role of $P^\bu$ (see Lemma~\ref{cofibrant-positively-graded}).
\end{proof}

 Recall that any DG\+algebra $A^\bu$ with a Koszul cohomology algebra
$H^*(A^\bu)$ is ``a $K(\pi,1)$'', i.~e., admits a quasi-isomorphism
$\Cob^\bu(C)\rarrow A^\bu$ from the cobar construction of
a conilpotent coalgebra $C$ (see Theorem~\ref{main-theorem} and its
proof).
 The conilpotent coalgebra $C$ can be recovered as the degree-zero
cohomology coalgebra of the bar construction of the DG\+algebra
$A^\bu$, i.~e., $C=H^0\Br^\bu(A^\bu)$.

 As above, let $N_mC=\ker(C\to (C/k)^{\ot m+1})$ be the canonical
increasing filtration on a conilpotent coalgebra~$C$ and let
$\gr^NC=\bigoplus_m N_mC/N_{m-1}C$ be the associated graded coalgebra
(see Section~\ref{self-consistency}).
 The following theorem characterizes those DG\+algebras with
Koszul cohomology algebras that are not only quasi-formal but
actually formal.
 
\begin{thm} \label{formal-graded-thm}
 Let $A^\bu$ be an augmented DG\+algebra with a Koszul
cohomology algebra $H^*(A^\bu)$.
 Then the DG\+algebra $A^\bu$ is formal if and only if
the conilpotent coalgebra $C=H^0\Br(A^\bu)$ is isomorphic to
its associated graded coalgebra\/ $\gr^NC$ with respect to
the canonical increasing filtration~$N$.
\end{thm}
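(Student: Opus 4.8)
The plan is to reduce the formality of $A^\bu$ to a statement about the cobar construction of a \emph{graded} coalgebra, using the machinery already assembled in the previous sections. Since $H^*(A^\bu)$ is Koszul, Theorem~\ref{main-theorem} shows that $A^\bu$ is a $K(\pi,1)$: it is connected by a chain of quasi-isomorphisms of augmented DG\+algebras with $\Cob^\bu(C)$, where $C=H^0\Br^\bu(A^\bu)$. As formality is transitive along quasi-isomorphisms, it suffices to show that $\Cob^\bu(C)$ is formal if and only if $C\simeq\gr^NC$. I will use throughout that $\gr^NC$ is a \emph{Koszul} positively graded coalgebra with $H^*(\gr^NC)\simeq H^*(C)$, which holds because $H^*(C)=H^*(A^\bu)$ is Koszul (Section~\ref{self-consistency}).

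The technical heart is the following assertion, which I would isolate as a lemma: \emph{for any Koszul positively (internally) graded coalgebra $D$, the cobar DG\+algebra $\Cob^\bu(D)$ is formal}. To prove it, I would exploit that the internal grading of $D$ endows $\Cob^\bu(D)$ with a second grading~$j$ preserved by the cobar differential, while the cohomological grading~$n$ counts the tensor factors; since $D_+$ sits in internal degrees $\ge1$, the bigraded components $\Cob^{n}(D)^{(j)}$ vanish for $n>j$, and Koszulity of $D$ says precisely that the cohomology is concentrated on the diagonal $n=j$. The diagonal part $W=\bigoplus_j\Cob^j(D)^{(j)}$ is then a subalgebra on which the differential vanishes (it would raise $n$ beyond~$j$), the coboundaries $B=\bigoplus_j d\bigl(\Cob^{j-1}(D)^{(j)}\bigr)$ form a two-sided ideal in $W$ (because the elements of $W$ are cocycles and $d$ is a derivation), and the quotient $W/B$ is exactly the cohomology algebra $H^*(D)$. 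The projection $\Cob^\bu(D)\rarrow H^*(D)$ sending the off-diagonal part to zero and acting on the diagonal by $W\rarrow W/B$ is then a morphism of DG\+algebras, and Koszulity is what makes it a quasi-isomorphism; this exhibits $\Cob^\bu(D)$ as formal.

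Granting the lemma, the ``if'' direction is immediate: if $C\simeq\gr^NC$, then $C$ is itself a Koszul graded coalgebra, the isomorphism induces a DG\+algebra isomorphism $\Cob^\bu(C)\simeq\Cob^\bu(\gr^NC)$, and the latter is formal. For the ``only if'' direction, I would apply the bar construction. Formality, together with Lemma~\ref{augmented-quasi} (both DG\+algebras having cohomology in positive degrees), yields a chain of \emph{augmented} quasi-isomorphisms between $\Cob^\bu(C)$ and $H^*=H^*(A^\bu)$ with zero differential, hence a quasi-isomorphism $\Br^\bu\Cob^\bu(C)\rarrow\Br^\bu(H^*)$ of DG\+coalgebras. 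By Proposition~\ref{bar-cobar-adjunction}(b) the source is quasi-isomorphic to $C$, so passing to degree-zero cohomology gives $C\simeq H^0\Br^\bu(H^*)$. Both $H^0\Br^\bu(H^*)=\bigoplus_j\Tor^{H^*}_{j,j}(k,k)$ and $\gr^NC$ are the Koszul dual graded coalgebra of the Koszul algebra $H^*$, hence isomorphic by uniqueness of Koszul duals, giving $C\simeq\gr^NC$.

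The main obstacle I anticipate is the lemma, and within it the verification that the diagonal projection is simultaneously multiplicative and a quasi-isomorphism: the bookkeeping of the two gradings on $\Cob^\bu(D)$ and of the cobar signs, together with checking that $B$ is a two-sided ideal in $W$ and that the off-diagonal part is annihilated compatibly with products, is where the real content lies. Everything else is formal manipulation with the bar/cobar adjunction and with the uniqueness of Koszul duals already recorded in the paper.
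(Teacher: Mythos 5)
Your argument is correct and follows essentially the same route as the paper: the ``if'' direction rests on the pair of quasi-isomorphisms $\Cob^\bu(C)\rarrow A^\bu$ and $\Cob^\bu(\gr^NC)\rarrow H^*(A^\bu)$ with $\gr^NC$ identified as the Koszul dual coalgebra of $H^*(A^\bu)$, and the ``only if'' direction applies the bar construction to an augmented chain of quasi-isomorphisms (via Lemma~\ref{augmented-quasi}) and compares the degree-zero cohomology coalgebras. Your explicit lemma on the formality of $\Cob^\bu(D)$ for a Koszul graded coalgebra $D$ (the diagonal-projection quasi-isomorphism) is precisely what the paper subsumes under the phrase ``by the definition of a Koszul graded coalgebra, there is a natural quasi-isomorphism $\Cob^\bu(\gr^NC)\rarrow H^*(A^\bu)$'', and your verification of it is sound.
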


\begin{proof}
 By (the proof of) Theorem~\ref{main-theorem},
the DG\+coalgebra $\Br(A^\bu)$ is quasi-isomorphic to its
degree-zero cohomology coalgebra~$C$.
 The coalgebra $C$ is conilpotent, and its cohomology algebra
$\Ext_C^*(k,k)=H^*\Cob^\bu(C)$, being isomorphic to the algebra
$H^*(A^\bu)$, is Koszul.
 By~\cite[Theorem~4.2]{Pbogom}, it follows that the graded
coalgebra $\gr^NC$ is Koszul and quadratic dual to $H^*(A^\bu)$.
 By the definition of a Koszul graded coalgebra, there is
a natural quasi-isomorphism $\Cob^\bu(\gr^NC)\rarrow H^*(A^\bu)$.

 Hence, whenever the coalgebras $C$ and $\gr^NC$ are isomorphic,
the DG\+algebras $A^\bu$ and $H^*(A^\bu)$ are connected by
a pair of quasi-isomorphisms $\Cob^\bu(C)\rarrow A^\bu$ and
$\Cob^\bu(C)\rarrow H^*(A^\bu)$.
 Conversely, suppose that there is a chain of quasi-isomorphisms
of DG\+algebras connecting $A^\bu$ with $H^*(A^\bu)$.
 By Lemma~\ref{augmented-quasi}, this can be assumed to be
a chain of quasi-isomorphisms of augmented DG\+algebras.
 Applying the bar construction, we obtain a chain of comultiplicative
quasi-isomorphisms connecting the DG\+coalgebras
$\Br^\bu(A^\bu)$ and $\Br^\bu(H^*(A^\bu))$.
 It follows that the degree-zero cohomology coalgebras
$H^0\Br^\bu(A^\bu)=C$ and $H^0\Br^\bu(H^*(A^\bu))=\gr^NC$
of these two DG\+coalgebras are isomorphic.
\end{proof}

 The following series of examples~\cite[Section~9.11]{Partin}
provides a negative answer to a question of Hopkins and
Wickelgren~\cite[Question~1.4]{HW}.

\begin{ex} \label{counter-galois}
 Let $l$ be a prime number and $G$ be a profinite group; denote by
$G^{(l)}$ the maximal quotient pro-$l$-group of~$G$.
 Let $k$ be a field of characteristic~$l$; then the $k$\+vector
space $D=k(G)$ of locally constant $k$\+valued functions on $G$
is endowed with a natural structure of coalgebra over~$k$ with
respect to the convolution comultiplication.
 We will call this coalgebra the \emph{group coalgebra} of
a profinite group $G$ over a field~$k$.
 The maximal conilpotent subcoalgebra $C=\Nilp D\subset D$ is
naturally identified with the group coalgebra $k(G^{(l)})$ of
the pro-$l$-group $G^{(l)}$.
 The cohomology map $H^*(G^{(l)},k)\rarrow H^*(G,k)$ is known to be
an isomorphism, at least, whenever either the cohomology algebra
$H^*(G,k)$ is Koszul~\cite[Corollary~5.5]{Pbogom}, or $G=G_F$ is
the absolute Galois group of a field $F$ containing a primitive
$l$\+root of unity~\cite{Voev}.

 Let $l\ne p$ be two prime numbers and $F$ be a finite extension
of the field of $p$\+adic numbers $\mathbb Q_p$ or the field
of formal Laurent power series $\mathbb F_p((z))$ with coefficients
in the prime field~$\mathbb F_p$.
 Assume that the field $F$ contains a primitive $l$\+root of
unity if $l$~is odd, or a square root of~$-1$ if $l=2$.
 In other words, the cardinality~$q$ of the residue field
$f=\mathcal O_F/\mathfrak m_F$ of the field~$F$ should be such that
$q-1$ is divisible by~$l$ if $l$~is odd and by~$4$ if $l=2$.
 Then the maximal quotient pro-$l$-group $G_F^{(l)}$ of the absolute
Galois group $G_F$ is isomorphic to the semidirect product of two
copies of the group of $l$\+adic integers $\Z_l$ with one of them
acting in the other one by the multiplication with~$q$.

 So, in the exponential notation, the group $H=G_F^{(l)}$ is
generated by two symbols $s$ and $t$ with the relation
$sts^{-1}=t^q$, or, redenoting $s=1+x$ and $t=1+y$ and recalling
that we are working over a field of characteristic~$l$,
\begin{equation}
\begin{aligned}
 (1+x)(1+y)(1+x)^{-1}(1+y)^{-1}&=(1+y^l)^{\frac{q-1}{l}}
 \qquad\text{for $l$ odd, or} \\
 (1+x)(1+y)(1+x)^{-1}(1+y)^{-1}&=(1+y^4)^{\frac{q-1}{4}}
 \qquad\text{for $l=2$.}
\end{aligned}
\end{equation}
 This is a single nonhomogeneous quadratic relation of the
type~\eqref{234} defining the conilpotent group coalgebra
$C=k(H)$.
 The quadratic principal part~\eqref{homog2} of this relation
is simply $xy-yx=0$; this in fact defines the associated graded
coalgebra $\gr^NC$, which turns out to be the symmetric coalgebra
in two variables.

 Alternatively, one can easily compute the cohomology algebra
$H^*(H,k)\simeq H^*(G_F,k)$ to be the exterior algebra in two
generators of degree~$1$; then the graded coalgebra $\gr^NC$ is
recovered as the quadratic dual.
 Either way, the coalgebra $\gr^NC$ is cocommutative and 
the coalgebra $C$ is not (as the group $H$ is not commutative),
so $C$ cannot be isomorphic to $\gr^NC$.
 Applying Theorem~\ref{formal-graded-thm}, we conclude that
the cochain DG\+algebra $\Cob^\bu(C)$ of the pro-$l$-group $H$
is not formal.
 The cochain DG\+algebra $\Cob^\bu(D)=\Cob^\bu(k(G_F))$ of
the absolute Galois group $G_F$, being quasi-isomorphic to
the DG\+algebra $\Cob^\bu(C)$ via the natural quasi-isomorphism
$\Cob^\bu(C)\rarrow\Cob^\bu(D)$ induced by the embedding of
coalgebras $C\rarrow D$, is consequently not formal, either.

 To sum up this example in a more abstract fashion, for any
noncommutative pro-$l$-group $G$ such that $H^*(G,\Z/l)$ is
the exterior algebra over $H^1(G,\Z/l)$ and any field~$k$ of
characteristic~$l$, the cochain DG\+algebra $\Cob^\bu(k(G))$
is not formal.
\end{ex}

 We are not aware of any example of a field $F$ \emph{containing
all the $l$\+power roots of unity} (that is, all the roots of
unity of the powers $l^n$, \,$n\ge1$) whose cochain DG\+algebra
$\Cob^\bu(\Z/l(G_F))$ over the coefficient field~$\Z/l$ is not formal.
 In particular, it would be interesting to know if there is a field
$F$ containing an algebraically closed subfield such that
the DG\+algebra $\Cob^\bu(\Z/l(G_F))$ is not formal for some prime~$l$.
 Our expectation is that such fields do exist, but we cannot pinpoint
any.

 The following family of examples of nonformal commutative
DG\+algebras over an arbitrary field is obtained by
a modification of Example~\ref{counter-galois}.

\begin{ex} \label{counter-lie}
 Consider a single nonhomogeneous quadratic Lie relation of
the type~\eqref{234} for $m+2$ variables~$x$, $y$, $z_1$,~\dots,~$z_m$
\begin{equation} \label{four-variable-lie}
 [x,y]+q_3(z_1,\dotsc,z_m)+q_4(z_1,\dotsc,z_m)+q_5(z_1,\dotsc,z_m)
 +\dotsb=0,
\end{equation}
where $q_n$~are homogeneous Lie expressions of degree~$n$
in the variables~$z_1$,~\dots, $z_m$ over a field~$k$.
 The relation~\eqref{four-variable-lie} can be viewed as defining
a pronilpotent Lie algebra $L$, or its dual conilpotent Lie coalgebra,
or its conilpotent coenveloping coalgebra $C$, or its dual
topological associative algebra, which is simply the quotient algebra
of the algebra of noncommutative formal Taylor power series in
the $m+2$ variables~$x$, $y$, $z_1$,~\dots, $z_m$ by the closed ideal
generated by the single power series~\eqref{four-variable-lie}.

 The homogeneous part~\eqref{homog2} of the
relation~\eqref{four-variable-lie} has the form $[x,y]=0$, and
the relation~\eqref{four-variable-lie} is self-consistent, i.~e.,
the associated graded coalgebra $\gr^NC$ is indeed the conilpotent
coalgebra cogenerated by the $m+2$ variables with the single relation
$xy-yx=0$ (and not a smaller coalgebra).
 One can check this, e.~g., by a trivial application of the
Diamond Lemma~\cite{Berg} for noncommutative power series
(the single relation~\eqref{four-variable-lie} starts with~$xy$,
so there are no ambiguities to resolve).
 The graded coalgebra $\gr^NC$ is Koszul, and its quadratic/Koszul dual
algebra $H^*(C)\simeq H^*(\gr^NC)$ is the connected direct sum of
the exterior algebra in two generators of degree~$1$ and $m$~copies of
the exterior algebra in one generator of degree~$1$.

 Now setting $A^\bu=(\bigwedge(L^*),d)$ to be the Chevalley--Eilenberg
complex of the profinite-dimensional Lie algebra $L$ (i.~e.,
the inductive limit of the Chevalley-Eilenberg cohomological complexes
of the finite-dimensional quotient Lie algebras of $L$ by its open
ideals), one obtains a commutative DG\+algebra endowed with
a natural quasi-isomorphism $\Cob^\bu(C)\rarrow A^\bu$ from
the cobar construction of the coalgebra~$C$.
 The cohomology algebra $H^*(A^\bu)$ is the connected direct sum of
the exterior algebra in two generators and $m$~copies of the exterior
algebra in one generator of degree~$1$, while the bar construction
$\Br^\bu(A^\bu)$ is quasi-isomorphic to~$C$.
 So the commutative DG\+algebra $A^\bu$ cannot be connected with
its cohomology algebra $H^*(A^\bu)$ by a chain of quasi-isomorphisms,
even in the class of noncommutative DG\+algebras, unless
the coalgebra $C$ is isomorphic to $\gr^NC$.
 The latter is easily seen to be impossible, e.~g., when
the degree-three Lie form $q_3(z_1,\dotsc,z_m)$ does not vanish.

 Indeed, consider a variable change~\eqref{123} of the form
$x\to x + \sum_{n\ge2} p_{n,x}(x,y,z)$, \ 
$y\to y + \sum_{n\ge2} p_{n,y}(x,y,z)$, and
$z_i\to z_i + \sum_{n\ge2} p_{n,i}(x,y,z)$, where
$z$~denotes the collection of variables $(z_1,\dotsc,z_m)$
and $\deg p_{n,x}=\deg p_{n,y}=\deg p_{n,i}=n$.
 The relation~\eqref{four-variable-lie} gets transformed by this
variable change into the relation
\begin{multline} \label{substitute} \textstyle
 [x+\sum_n p_{n,x}(x,y,z)\;y+\sum_np_{n,y}(x,y,z)] \\ \textstyle
 + \,q_3(z_1+\sum_np_{n,1}(x,y,z)\;\dotsc\;z_m+\sum_np_{n,m}(x,y,z))
 \, + \,\dotsb\, = 0.
\end{multline}
 The relation~\eqref{substitute} is never equivalent to $[x,y]=0$,
as reducing both of these modulo all the expressions of
degree~$\ge4$ and all the expressions divisible by~$x$ or~$y$,
the former takes the form $q_3(z_1,\dotsc,z_m)=0$, while the latter
vanishes entirely.
 So they cannot generate the same closed ideal in the ring of
noncommutative formal power series.
\end{ex}

\begin{ex} \label{counter-equivalent-relations}
 The following na\"\i ve attempt to construct a commutative version
of Example~\ref{counter-galois} illustrates one of the intricacies
of relations sets~\eqref{234}.
 Consider, instead of the $(m+2)$\+variable Lie relation in
Example~\ref{counter-lie}, a two-variable Lie relation
\begin{equation} \label{two-variable-lie}
 [x,y]+q_3(x,y)+q_4(x,y)+q_5(x,y)+\dotsb=0,
\end{equation}
where $q_n$~are homogeneous Lie expressions of degree~$n$ in
the variables $x$ and~$y$.
 We claim that the relation~\eqref{two-variable-lie} is always
equivalent to the relation $[x,y]=0$ in the world of (Lie or
associative) formal power series in $x$ and~$y$, so the conilpotent
(coenveloping) coalgebra $C$ defined by~\eqref{two-variable-lie}
is in fact cocommutative and isomorphic to $\gr^NC$.
 
 Indeed, the innermost bracket in any Lie monomial in $x$ and $y$
is always $\pm[x,y]$.
 Substituting the expression for $[x,y]$ obtained
from~\eqref{two-variable-lie} in place of the innermost bracket
in every term of degree~$\ge3$ in~\eqref{two-variable-lie}, one deduces
from~\eqref{two-variable-lie} a new Lie relation in $x$ and~$y$
with every term of degree $n\ge3$ replaced by an (infinite) linear
combination of terms of degrees higher than~$n$.
 Continuing in this fashion and passing to the limit in the formal
power series topology, one concludes that
the relation~\eqref{two-variable-lie} implies $[x,y]=0$ (which, in turn,
implies~\eqref{two-variable-lie}).
 E.~g., the relation $[x,y]=[x,[x,y]]$ would imply
$[x,y]=[x,[x,[x,y]]]$, which would lead to $[x,y]=[x,[x,[x,[x,y]]]]$,
etc., and passing to the limit one would finally come to $[x,y]=0$.
\end{ex}
 
 In fact, the exterior algebra in two variables of degree~$1$, which
is the cohomology algebra $H^*(C)$ of the coalgebra $C$ defined
by~\eqref{two-variable-lie}, is a free (super)commutative graded
algebra, so it is intrinsically formal as a commutative graded algebra
(i.~e., any commutative DG\+algebra with such cohomology algebra
is formal).
 Example~\ref{counter-galois} shows that a noncommutative DG\+algebra
with such cohomology algebra does not have to be formal, while
Example~\ref{counter-lie} provides a (super)commutative Koszul graded
algebra that is not intrinsically formal in the commutative world
already.

\bigskip

\end{document}